\theoremstyle{plain}
\renewcommand\thefigure{\@arabic\c@figure}
\newtheorem{thm}{\bf Theorem}
\newenvironment{theorem}{\begin{thm}} {\end{thm}}
\newtheorem{cor}{\bf Corollary}
\newtheorem{lmm}{\bf Lemma}
\newenvironment{lemma}{\begin{lmm}}{\end{lmm}}
\theoremstyle{remark}
\newtheorem{rem}{\bf Remark}
\def \epsilon {{\varepsilon}}
\definecolor{bgblue}{rgb}{0.04,0.39,0.54}
\definecolor{lired}{rgb}{0.3, 0.0, 0.0}
\definecolor{ligreen}{rgb}{0.0, 0.3, 0.0}
\definecolor{liblue}{rgb}{0.9, 1.0, 1.0}
\definecolor{gray}{rgb}{0.6, 0.6, 0.6}
\definecolor{sky}{rgb}{0.3, 1.0, 1.0}
\definecolor{bunhong}{rgb}{1.0, 0.3, 1.0}
\definecolor{yellow}{rgb}{0.97, 1, 0.0}
\definecolor{liyellow}{rgb}{0.9, 0.8, 0.0}
\definecolor{cengse}{rgb}{0.00,0.40,0.29}
\def \bx {\bm x}
\renewcommand \wedge \times
\begin{document}

{\title[High-order schemes and error analysis for Navier-stokes equations] {
Stability and error analysis of a class of high-order IMEX schemes  for Navier-stokes equations with periodic boundary conditions}

\author[
	F. Huang and J. Shen
	]{
	Fukeng Huang and Jie Shen
		}
	\thanks{Department of Mathematics, Purdue University. This research  is partially supported by NSF grants DMS-2012585 and AFOSR FA9550-20-1-0309. Emails: huang972@purdue.edu (F. Huang), shen7@purdue.edu (J. Shen).}

\keywords{Navier-stokes, stability, error analysis, high-order}
 \subjclass[2000]{65M15; 76D05; 65M70}

\begin{abstract}
We construct high-order  semi-discrete-in-time and fully discrete (with Fourier-Galerkin in space)    schemes  for the incompressible Navier-Stokes equations with periodic boundary conditions, and carry out corresponding error analysis. The schemes are of implicit-explicit type based on a scalar auxiliary variable (SAV) approach. It is shown that numerical solutions of these schemes   are uniformly bounded without any restriction on time step size. These uniform bounds enable us to  carry out a rigorous error analysis for the  schemes up to fifth-order  in a unified form, and derive global  error estimates in $l^\infty(0,T;H^1)\cap l^2(0,T;H^2)$ in the two dimensional case as well as local  error estimates in $l^\infty(0,T;H^1)\cap l^2(0,T;H^2)$ in the three dimensional case. We also present numerical results confirming our theoretical convergence rates and demonstrating  advantages of higher-order schemes for flows with complex structures in  the double shear layer problem.

\end{abstract}
 \maketitle

\section{Introduction}
Numerical approximation of the Navier-Stokes equations has been a subject of intensive study for many decades and continues to attract considerable attention, as it plays a fundamental role in computational fluid dynamics. Most of the work are concerned with the  Navier-Stokes equations with non periodic boundary conditions, as is the case with the most applications. An enormous amount of work has been devoted to construct efficient and stable numerical algorithms for solving the incompressible Navier-Stokes equations with non periodic boundary conditions, see \cite{girault1979finite,Tema84,deville2002high,glowinski2003finite,gunzburger2012finite,peyret2013spectral} and the references therein. In particular, the  papers
\cite{baker1982higher,heywood1990finite,weinan1995projection,guermond2006overview,He.S07,MR2475954}, among others,  are particularly concerned with the error estimates for semi-discrete-in-time or fully discrete schemes.

We consider in this paper numerical approximation of  the  incompressible Navier-Stokes equations in primitive formulation:
\begin{subequations}\label{eq:NS}
\begin{align}
& \frac{\partial \bm u}{\partial t}-\nu \Delta \bm u+( \bm u\cdot \nabla) \bm u +\nabla p=0,\\
& \nabla \cdot \bm u=0,
\end{align}
\end{subequations}
with a suitable initial condition  $\bm u|_{t=0}=\bm u_0$ in a rectangular domain $\Omega\subset \mathbb{R}^d\;(d=2,3)$ with   periodic boundary conditions.  The unknowns are velocity $\bm u$ and the pressure $p$ which is assumed to have zero mean for uniqueness,  $\nu>0$ is the viscosity. To simplify the presentation, we have set the   external force to be zero. But our schemes and analytical results can be naturally extended to the case with a non-zero external force.

The incompressible Navier-Stokes equations with periodic boundary conditions retain the essential mathematical properties/difficulties of the system with non periodic boundary conditions, but are amenable to very efficient numerical algorithms using the Fourier-spectral method, and are particularly useful in the study of homogeneous turbulence \cite{orszag1972numerical,she1991structure,moin1998direct}.

There exists also a significant  number of work  devoted to the numerical analysis for Navier-Stokes equations with periodic boundary conditions. For examples, in \cite{hald1981convergence}, Hald proved the convergence of semi-discrete Fourier-Galerkin methods in two and three dimensions; in \cite{weinan93}, E used semigroup theory to establish convergence and error estimates of the semi-discrete Fourier-Galerkin and Fourier-collocation methods  in various energy norms and $L^p$-norms; in \cite{wang2012efficient}, Wang proved  uniform bounds and  convergence of
long time statistics for a semi-discrete second-order implicit-explicit (IMEX) scheme for the 2-D Navier-Stokes equations with periodic boundary conditions in vorticity-stream function formulation, see also  related work in \cite{gottlieb2012long,tone2015long}; in \cite{cheng_wang2016}, Cheng and Wang established  uniform bounds  for  semi-discrete higher-order (up to fourth-order) IMEX scheme for the 2-D Navier-Stokes equations with periodic boundary conditions in vorticity-stream function formulation; in
\cite{heister2017unconditional}, Heister et al. proved uniform bounds for a  fully discrete finite-element and second-order IMEX scheme  for the 2-D Navier-Stokes equations with periodic boundary conditions in vorticity-velocity formulation. Note that the uniform bounds for semi-discrete  IMEX schemes obtained in the above references are for two-dimensional cases only and require that the time step be sufficiently small.

It appears that, except some recently constructed schemes based on the scalar auxiliary variable (SAV) approach \cite{lin2019numerical,li2020error}, all other IMEX type schemes (i.e., the nonlinear term is treated explicitly) for Navier-Stokes equations require the time step to be sufficiently small to have a bounded numerical solution. Furthermore,
to the best of our knowledge, there is no error analysis for any IMEX scheme  for the three-dimensional Navier-Stokes equations, and no error estimate is available for  any higher-order ($\ge 3$) IMEX scheme. 

In this paper, we construct semi-discrete and fully discrete with Fourier-Galerkin in space  SAV IMEX schemes
and carry out a unified stability and error analysis. Our main contributions include:
\begin{itemize}
 \item Our semi-discrete and fully discrete schemes of arbitrary order in time are unconditionally stable without any restriction on time step size;
 \item Global error estimates in $l^\infty(0,T;H^1)\cap l^2(0,T;H^2)$ up to fifth-order in time are established for the two-dimensional case;
 \item Local error estimates in $l^\infty(0,T_* ;H^1)\cap l^2(0,T_* ;H^2)$ (with a $T_* \le T$)  up to fifth-order in time are established for the three-dimensional case.
\end{itemize}
Our schemes are constructed using the  SAV approach proposed in \cite{HSY20}  which can be used for general dissipative systems. The main advantages of this approach, compared with other SAV approaches proposed in \cite{lin2019numerical,li2020error} for Navier-Stokes equations is that our schemes are linear, decoupled and can be high-order. Moreover, in the two dimensional case, we use a stronger energy dissipation law  \eqref{eq:diss2d}, which is only true for the 2-D Navier-Stokes equations with periodic boundary conditions, that leads to a uniform bound for the numerical solution in $l^\infty(0,T;H^1)$, as opposed to $l^\infty(0,T;L^2)$ in the three dimensional case.

The rest of the paper is organized as follows. In the next section,  we provide some preliminaries to be used in the sequel. In Section 3,
we describe our semi-discrete and fully discrete with Fourier-Galerkin SAV schemes for the Navier-Stokes equations with periodic boundary condition,  prove its unconditionally stability, and provide some numerical results to demonstrate the convergence rates and validate the robustness of our schemes. In section 4, we present  detailed error analysis for the $k$th-order schemes $(k=1,2,3,4,5)$ in a unified form. Some concluding remarks are given in the last section.

\section{Preliminaries}
We first  introduce some notations.
  We denote by $(\cdot, \cdot)$ and $\|\cdot\|$ the inner product and the norm in $L^2(\Omega)$, and denote
$$\bm H^k_p(\Omega)=\{u^j \,(j=0,1,\cdots,k)\, \in L^2(\Omega): u^j \,(j=0,1,\cdots,k-1)\, \quad \text{periodic},\quad \int_\Omega u d\bx=0\},$$
with norm $\|\cdot\|_k$. For non-integer $s>0$, $H^s_p(\Omega)$ and the corresponding norm $\|\cdot\|_s$ are defined by space interpolation \cite{adams2003sobolev}.
In particular, we set $H^0_p(\Omega)=L^2_0(\Omega)$.

Let $V$ be a Banach space, we shall also use the standard notations
  $L^p(0,T;V)$ and $C([0,T];V)$. To simplify the notation, we often omit the spatial dependence  for the exact solution $u$, i.e.,  $u(x,t)$ is often denoted by $u(t)$.  We shall use bold faced letters to denote vectors and vector spaces, and use $C$ to denote a generic positive constant independent of the discretization parameters.

We now define the following spaces which are particularly used for Navier-Stokes equations:
\begin{equation*}
\textbf{H}=\{\bm v \in \bm L_0^2(\Omega): \nabla\cdot \bm v =0\},\quad  \textbf{V}=\{\bm v \in \bm H^1_p(\Omega):  \nabla\cdot \bm v =0\}.
\end{equation*}
Let $\bm v\in \bm L^2_0(\Omega)$, we define $w:=\Delta^{-1} \bm v$  as  the solution of
\begin{equation*}
 \Delta \bm w=\bm v \quad \bm x\in \Omega;\quad \bm w \;\text{periodic with zero mean}.
\end{equation*}
Note that in the periodic  case, we can define   the operators $\nabla$, $\nabla\cdot$ and $\Delta^{-1}$
in the Fourier space by expanding functions and their derivatives in Fourier series, and one can easily show that  these operators commute with each other.

We define   a linear operator \textbf{A} in $\bm L_0^2(\Omega)$ by
\begin{equation}\label{eq:A}
\textbf{A} \bm v := \nabla \times \nabla \times \Delta^{-1} \bm v, \quad\forall \bm v\in \bm L_0^2(\Omega).
\end{equation}
Since
\begin{equation*}
\|\Delta \bm w\|^2=\|\nabla \times \nabla \times \bm w\|^2+\|\nabla\nabla \cdot \bm w\|^2\quad \forall \bm w\in \bm H^2_p(\Omega),
\end{equation*}
we derive immediately from the above that
\begin{equation}\label{eq:Anorm}
\|\textbf{A} \bm v\|= \|\Delta \Delta^{-1}\bm v\|-\|\nabla\nabla \cdot \Delta^{-1}\bm v\| \le \|\bm v\|, \,\,\forall \bm v \in \bm L_0^2(\Omega).
\end{equation}
Next, we define the trilinear form $b(\cdot,\cdot,\cdot)$  and $b_\textbf{A}(\cdot,\cdot,\cdot)$ by
\begin{equation*}
b(\bm u, \bm v, \bm w)= \int_{\Omega}(\bm u \cdot \nabla) \bm v \cdot \bm w d\bx,\,\,b_\textbf{A}(\bm u, \bm v, \bm w)= \int_{\Omega}\textbf{A}((\bm u \cdot \nabla) \bm v) \cdot \bm w d\bx.
\end{equation*}
In particular, we have
\begin{equation*}
b(\bm u, \bm v, \bm w)=-b(\bm u, \bm w, \bm v),\,\forall \bm u \in \textbf{H}, \, \bm v,\bm w \in \bm H_p^1(\Omega),
\end{equation*}
which implies
\begin{equation}\label{tri1}
b(\bm u, \bm v, \bm v)=0,\,\forall \bm u \in \textbf{H}, \, \bm v\in \bm H_p^1(\Omega).
\end{equation}
In the two-dimensional periodic case, we have also \cite{Tema83}
\begin{equation}\label{tri2}
b(\bm u, \bm u, \Delta \bm u)=0,\,\forall \bm u \in  \bm H_p^2(\Omega).
\end{equation}
Taking the inner product of \eqref{eq:NS} with $\bm u$, thanks to \eqref{tri1}, we find that solution of the Navier-Stokes equations \eqref{eq:NS} satisfies the energy dissipation law
\begin{equation}\label{eq:diss}
\frac 12\frac{d}{dt}\|\bm u\|^2=-\nu \|\nabla \bm u\|^2 \quad\quad (d=2,3).
\end{equation}
On the other hand, in the two dimensional periodic case, taking the inner product of \eqref{eq:NS} with $-\Delta \bm u$, thanks to \eqref{tri2},   we derive another energy dissipation law \cite{Tema83}
\begin{equation}
\label{eq:diss2d}
\frac{1}{2}\frac{d}{dt} \|\nabla \bm u\|^2=-\nu \|\Delta \bm u\|^2 \quad \quad (d=2) .
\end{equation}
Using \eqref{eq:Anorm}, H\"older inequality and Sobolev inequality, we have \cite{Tema83}
\begin{eqnarray}
b(\bm u,\bm v,\bm w),\; b_\textbf{A}(\bm u,\bm v,\bm w)& \le&
   c\|\textbf{u}\|_1^{1/2}\|\textbf{u}\|^{1/2}\|\textbf{v}\|_2^{1/2}\|\textbf{v}\|_1^{1/2}\|\textbf{w}\|,\;   \quad  d=2;\label{eq:ineq2d}\\
b(\bm u,\bm v,\bm w),\;b_\textbf{A}(\bm u,\bm v,\bm w) &\le& c\|\bm u\|_1\|\nabla \bm v\|_{1/2}\|\bm w\|,  \quad  d=3.\label{eq:ineq3d}
 \end{eqnarray}
We also use frequently the following inequalities \cite{Tema83}:
\begin{equation}\label{eq:ineq2}
b(\bm u,\bm v,\bm w),\;b_\textbf{A}(\bm u,\bm v,\bm w) \le
\left\{
\begin{array}{lr}
  c\|\bm u\|_1\|\bm v\|_1\|\bm w\|_1;\\
  c\|\bm u\|_2\|\bm v\|_0\|\bm w\|_1;\\
 c\|\bm u\|_2\|\bm v\|_1\|\bm w\|_0;\\
 c\|\bm u\|_1\|\bm v\|_2\|\bm w\|_0;\\
c\|\bm u\|_0\|\bm v\|_2\|\bm w\|_1;
\end{array}
\right. \quad d\le 4.
\end{equation}
Note that \eqref{tri2}, \eqref{eq:diss2d},   and \eqref{eq:ineq2d} enable us to obtain global error estimates  in the two-dimensional case.

\section{The SAV schemes and  stability results}
In this section, we construct semi-discrete and fully discrete  SAV schemes  for the incompressible Navier-Stokes equations, and establish stability results for both semi-discrete and fully discrete schemes.
More precisely,
 we shall prove uniform $L^2$ bound for the SAV scheme based on  the dissipation law  \eqref{eq:diss} in 3D case, and prove a  uniform  $H^1$   bound for the SAV scheme based on the dissipation law  \eqref{eq:diss2d} in 2D case.

\subsection{The SAV schemes}
Following the ideas in \cite{HSY20} for the general dissipative systems, we  construct below  unconditionally energy stable schemes for \eqref{eq:NS}.

For Navier-Stokes equations with periodic boundary conditions, we can explicitly eliminate the pressure from \eqref{eq:NS}. Indeed, taking the divergence on both sides of \eqref{eq:NS}, we find
\begin{equation}\label{prePoisson}
-\Delta p=\nabla\cdot ({\bm u}\cdot\nabla {\bm u}),
\end{equation}
from which we derive
\begin{equation}\label{prePoisson2}
\begin{split}
\nabla p&= \nabla \Delta^{-1} \Delta p=-\nabla \Delta^{-1} \nabla\cdot ({\bm u}\cdot\nabla {\bm u})\\
&=-\nabla \nabla\cdot  \Delta^{-1}({\bm u}\cdot\nabla {\bm u})=-(\Delta+\nabla \times \nabla \times) \Delta^{-1}({\bm u}\cdot\nabla {\bm u}) \\
&= -{\bm u}\cdot\nabla {\bm u}- \nabla \times \nabla \times \Delta^{-1}({\bm u}\cdot\nabla {\bm u}) =-{\bm u}\cdot\nabla {\bm u}- \textbf{A} ({\bm u}\cdot\nabla {\bm u}),
\end{split}
\end{equation}
 where $\textbf{A}$ is defined in \eqref{eq:A}.
Hence, \eqref{eq:NS} is equivalent to \eqref{prePoisson} and
\begin{equation}\label{eq:NS1b}
  \frac{\partial \bm u}{\partial t}-\nu \Delta \bm u-\textbf{A} (\bm u\cdot \nabla \bm u)=0.
\end{equation}

In order to apply the SAV approach, we introduce a SAV, $r(t)=E(\bm u(t))+1$, and expand \eqref{eq:NS1b}  as
\begin{subequations}\label{eq:NSb}
\begin{align}
&  \frac{\partial \bm u}{\partial t}-\nu \Delta \bm u-\textbf{A} (\bm u\cdot \nabla \bm u)=0, \label{eq:NS1}\\
& \frac{d E}{dt}=\left\{
\begin{array}{lr}
 -\nu\frac{r(t)}{E(\bm u(t))+1} \|\Delta \bm u\|^2, \qquad d=2,\\
-\nu\frac{r(t)}{E(\bm u(t))+1} \|\nabla \bm u\|^2, \qquad d=3,
\end{array}
\right.\label{eq:NS3}
\end{align}
\end{subequations}
where
\begin{equation}
 E(\bm u)=\begin{cases} \frac 12\|\nabla \bm u\|^2,& \;d=2,\\
 \frac 12\|\bm u\|^2,& \;d=3.\end{cases}
\end{equation}
We construct below semi-discrete and fully discrete schemes for the expanded system \eqref{eq:NSb}.

\subsubsection{Semi-discrete SAV schemes}
We consider first the time discretization of \eqref{eq:NSb} based on the implicit-explicit BDF-$k$ formulae in the following unified form:

Given $r^n$, $\bm u^j\;(j=n,n-1,\cdots,n-k+1)$,  we compute $\bar {\bm u}^{n+1},\, r^{n+1}, p^{n+1}, \, \xi^{n+1}$ and $\bm u^{n+1}$ consecutively by
\begin{subequations}\label{eq: NSsavb}
\begin{align}
& \frac{\alpha_k \bar{\bm u}^{n+1}-A_k(\bar{\bm u}^n)}{\delta t}-\nu \Delta \bar{\bm u}^{n+1}-\textbf{A} (B_k({\bm u}^n) \cdot \nabla B_k({\bm u}^n))=0,\label{eq: NSsav1b}\\
& \frac{1}{\delta t}\big(r^{n+1}-r^{n}\big)=\left\{
\begin{array}{lr}
-\nu\frac{r^{n+1}}{E(\bar{\bm u}^{n+1})+1} \|\Delta \bar{\bm u}^{n+1}\|^2,\qquad d=2,\\
-\nu\frac{r^{n+1}}{E(\bar{\bm u}^{n+1})+1} \|\nabla \bar{\bm u}^{n+1}\|^2,\qquad d=3;
\end{array}
\right.\label{eq: NSsav3b}\\
& \xi^{n+1}=\frac{r^{n+1}}{E(\bar{\bm u}^{n+1})};\label{eq: NSsav4b}\\
& \bm u^{n+1}=\eta_{k}^{n+1}\bar{\bm u}^{n+1} \;\text{ with } \eta^{n+1}_{k}=1-(1-\xi^{n+1})^{k}.\label{eq: NSsav5b}
\end{align}
\end{subequations}
Whenever pressure is needed, it can be computed from
\begin{equation}\label{prePoissont}
 \Delta p^{n+1}=-\nabla \cdot (\bm u^{n+1} \cdot \nabla \bm u^{n+1}).
\end{equation}
In the above,  $\alpha_k,$ the operators $A_k$ and $B_k$ $(k=1,2,3,4,5)$ are given by:
\begin{itemize}
\item[first-order:]
\begin{equation}\label{eq:bdf1}
\alpha_1=1, \quad A_1(\bm u^n)=\bm u^n,\quad B_1(\bar {\bm u}^n)=\bar {u}^n;
\end{equation}
\item[second-order:]
\begin{equation}\label{eq:bdf2}
\alpha_2=\frac{3}{2}, \quad A_2(\bm u^n)=2\bm u^n-\frac{1}{2}\bm u^{n-1},\quad B_2(\bar {\bm u}^n)=2\bar {\bm u}^n-\bar {\bm u}^{n-1};
\end{equation}
\item[third-order:]
\begin{equation}\label{eq:bdf3}
\alpha_3=\frac{11}{6}, \quad A_3(\bm u^n)=3\bm u^n-\frac{3}{2}\bm u^{n-1}+\frac{1}{3}\bm u^{n-2},\quad B_3(\bar {\bm u}^n)=3\bar {\bm u}^n-3\bar {\bm u}^{n-1}+\bar{\bm u}^{n-2};
\end{equation}
\item[fourth-order:]
\begin{equation}\label{eq:bdf4}
\alpha_4=\frac{25}{12}, \; A_4(\bm u^n)=4\bm u^n-3\bm u^{n-1}+\frac{4}{3}\bm u^{n-2}-\frac{1}{4}\bm u^{n-3},\; B_4(\bar {\bm u}^n)=4\bar {\bm u}^n-6\bar {\bm u}^{n-1}+4\bar {\bm u}^{n-2}-\bar {\bm u}^{n-3};\end{equation}
\item[fifth-order:]
\begin{equation}\label{eq:bdf5}
\begin{split}
\alpha_5=\frac{137}{60}, \quad & A_5(\bm u^n)=5\bm u^n-5\bm u^{n-1}+\frac{10}{3}\bm u^{n-2}-\frac{5}{4}\bm u^{n-3}+\frac{1}{5}\bm u^{n-4},\\
& B_5(\bar {\bm u}^n)=5\bar {\bm u}^n-10\bar {\bm u}^{n-1}+10\bar {\bm u}^{n-2}-5\bar {\bm u}^{n-3}+\bar {\bm u}^{n-4}.
\end{split}\end{equation}
\end{itemize}
Several remarks are in order:
\begin{itemize}

\item  We observe from \eqref{eq: NSsav3b} that $r^{n+1}$ is a first-order approximation to $E(u(\cdot,t_{n+1}))+1$ which implies that $\xi^{n+1}$ is a  first-order approximation to 1.
\item
\eqref{eq: NSsav1b} is a $k$th-order approximation to \eqref{eq:NS1b} with $k$th-order BDF for the linear terms and  $k$th-order Adams-Bashforth extrapolation for the nonlinear terms. Hence, $\bar{\bm u}^{n+1}$ is a $k$th-order approximation to $\bm u(\cdot, t^{n+1})$,
which, along with   \eqref{eq: NSsav3b} and \eqref{eq: NSsav1b}, implies that $\bm u^{n+1}$ and $p^{n+1}$ are $k$th-order approximations for $\bm u(\cdot, t^{n+1})$ and $p(\cdot, t^{n+1})$.
\item The main computational cost is to solve the Poisson type equation \eqref{eq: NSsav1b}.

\end{itemize}

\subsubsection{Fully discrete schemes with Fourier spectral method in space}
We now consider $\Omega=[0,L_x)\times [0,L_y)\times [0,L_z)$ with periodic boundary conditions.
We partition the domain $\Omega=(0, L_x) \times (0, L_y) \times (0, L_z)$ uniformly with size $h_x=L_x/N_x, h_y=L_y/N_y, h_z=L_z/N_z$ and $Nx, Ny, Nz$ are positive even integers. Then the Fourier approximation space can be defined as
\begin{equation*}\label{eq:fourierspace}
S_N = \text{span} \{e^{i\xi_jx}e^{i\eta_ky}e^{i\tau_lz}: -\frac{N_x}{2}\le j \le \frac{N_x}{2}-1,  -\frac{N_y}{2}\le k \le \frac{N_y}{2}-1,  -\frac{N_z}{2}\le l \le \frac{N_z}{2}-1\}\backslash \mathbb{R},
\end{equation*}
where $i=\sqrt{-1},\,\xi_j=2\pi j/L_x,\,\eta_k=2\pi k/L_y$ and $\tau_l=2\pi l/L_z$. Then, any function $u(x,y,z) \in L^2(\Omega)$ can be approximated by:
\begin{equation*}
u(x,y,z) \approx u_N(x,y,z)=\sum_{j=-\frac{N_x}{2}}^{\frac{Nx}{2}-1}\sum_{k=-\frac{N_y}{2}}^{\frac{N_y}{2}-1}\sum_{l=-\frac{N_z}{2}}^{\frac{N_z}{2}-1}\hat{u}_{j,k,l}e^{i\xi_jx}e^{i\eta_ky}e^{i\tau_lz},
\end{equation*}
with the Fourier coefficients defined as
\begin{equation*}
\hat{u}_{j,k,l}=\frac{1}{|\Omega|}\int_{\Omega} u e^{-i(\xi_j x+\eta_k y+\tau_l z)} d \bm x.
\end{equation*}
In the following, we fix $N_x=N_y=N_z=N$ for simplicity.

 Define the $L^2$-orthogonal projection operator $\Pi_N: L^2(\Omega) \rightarrow S_N$ by
\begin{equation*}
(\Pi_Nu-u, \Psi)=0,\quad \forall \, \Psi \in S_N,\quad u \in L^2(\Omega),
\end{equation*}
then we have the following approximation results (cf. \cite{kreiss1979stability}):
\begin{lemma}\label{Fourier}
For any $0 \le k \le m$, there exists a constant $C$ such that
\begin{equation}\label{eq:proerror}
\|\Pi_N u-u\|_k \le C\|u\|_m N^{k-m}, \forall \, u \in \bm H_p^m(\Omega).
\end{equation}
\end{lemma}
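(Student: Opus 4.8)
The plan is to pass to the Fourier basis and reduce the statement to Parseval's identity. First I would recall the Fourier characterization of the periodic Sobolev norms: writing a zero-mean periodic function as $u=\sum_{\bm\kappa}\hat u_{\bm\kappa}\,e^{\mathrm i\bm\kappa\cdot\bm x}$, where $\bm\kappa=(\xi_j,\eta_k,\tau_l)$ runs over the full dual lattice (the term $\bm\kappa=0$ being absent because of the zero-mean constraint), one has, for every real $s\ge 0$,
\[
c_s^{-1}\|u\|_s^2 \;\le\; \sum_{\bm\kappa}(1+|\bm\kappa|^2)^{s}\,|\hat u_{\bm\kappa}|^2 \;\le\; c_s\,\|u\|_s^2 ,
\]
with $c_s$ depending only on $s$ and $\Omega$. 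For integer $s$ this is immediate by differentiating the series term by term; for non-integer $s$ it is exactly the way $\bm H^s_p(\Omega)$ is realized through space interpolation, hence consistent with the definition adopted in Section 2.

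Next I would identify the projection error. Since $\{e^{\mathrm i\bm\kappa\cdot\bm x}\}$ is an $L^2$-orthogonal family and $S_N$ is, up to constants, their span over the index box $\Lambda_N=\{(j,k,l):-N/2\le j,k,l\le N/2-1\}$, the $L^2$-orthogonal projection $\Pi_N u$ is simply the truncation of the Fourier series of $u$ to $\Lambda_N$. Therefore
\[
u-\Pi_N u=\sum_{\bm\kappa\in\Lambda_N^{c}}\hat u_{\bm\kappa}\,e^{\mathrm i\bm\kappa\cdot\bm x},
\]
where $\Lambda_N^{c}$ collects the multi-indices having at least one component of modulus $\ge N/2$. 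For any such $\bm\kappa$ the corresponding wavenumber satisfies $|\bm\kappa|\ge \pi N/\max\{L_x,L_y,L_z\}=:c_0 N$.

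Then the estimate is a two-line computation. By the norm equivalence above and $0\le k\le m$,
\[
\|u-\Pi_N u\|_k^2 \;\le\; c_k\sum_{\bm\kappa\in\Lambda_N^{c}}(1+|\bm\kappa|^2)^{k}\,|\hat u_{\bm\kappa}|^2
= c_k\sum_{\bm\kappa\in\Lambda_N^{c}}(1+|\bm\kappa|^2)^{k-m}(1+|\bm\kappa|^2)^{m}\,|\hat u_{\bm\kappa}|^2 .
\]
Since $k-m\le 0$ and $1+|\bm\kappa|^2\ge c_0^2 N^2$ on $\Lambda_N^{c}$, we have $(1+|\bm\kappa|^2)^{k-m}\le (c_0^2N^2)^{k-m}$, and hence
\[
\|u-\Pi_N u\|_k^2 \;\le\; c_k\,c_0^{2(k-m)}N^{2(k-m)}\sum_{\bm\kappa}(1+|\bm\kappa|^2)^{m}\,|\hat u_{\bm\kappa}|^2 \;\le\; C\,N^{2(k-m)}\,\|u\|_m^2 .
\]
Taking square roots yields \eqref{eq:proerror}.

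The computation itself is routine; the only delicate point — the main obstacle, such as it is — is to make sure that for non-integer order the Sobolev norm defined by space interpolation in Section 2 agrees, up to equivalence, with the Fourier-multiplier norm $(1+|\bm\kappa|^2)^{s/2}$ used above, i.e. that the interpolation functor is compatible with the Fourier representation. This is a classical fact about periodic Sobolev spaces and can simply be cited; moreover, since the orders $k$ and $m$ appearing in all intended applications of the lemma are integers, one may alternatively restrict to integer orders and bypass interpolation entirely, with essentially the same proof.
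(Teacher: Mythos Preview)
Your argument is correct and is precisely the standard proof of this classical approximation result: pass to Fourier coefficients, identify $\Pi_N$ as spectral truncation, and use that the omitted modes have wavenumber $\gtrsim N$ to trade $(1+|\bm\kappa|^2)^{k-m}$ for $N^{2(k-m)}$.

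Note, however, that the paper does not actually prove this lemma; it simply states it and cites \cite{kreiss1979stability}. So there is no ``paper's own proof'' to compare against. Your write-up supplies exactly the argument one would find in the standard spectral-methods literature (including the cited reference), and the remark you make about non-integer orders and interpolation is a fair caveat, though as you observe it is moot for the applications in this paper since only integer $k,m$ occur.
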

We are now ready to describe our fully discrete schemes.


Given $r^n$ and $\bm u_N^j \in S_N$ for $j=n,..., n-k+1$, we compute $\bar {\bm u}_N ^{n+1},\, r^{n+1}, p_N^{n+1}, \, \xi^{n+1}$ and $\bm u_N^{n+1}$ consecutively by
\begin{small}
\begin{subequations}\label{eq: NSsav}
\begin{align}
& \big(\frac{\alpha_k \bar{\bm u}_N^{n+1}-A_k(\bar{\bm u}_N^n)}{\delta t}, v_N \big)+\nu \big(\nabla \bar{\bm u}_N^{n+1},\nabla v_N \big)-\big(\textbf{A}(B_k({\bm u}_N^n) \cdot \nabla B_k({\bm u}_N^n)),v_N \big)=0, \quad \forall v_N \in S_N \label{eq: NSsavE1}\\
& \frac{1}{\delta t}\big(r^{n+1}-r^{n}\big)=\left\{
\begin{array}{lr}
-\nu\frac{r^{n+1}}{E(\bar{\bm u}_N^{n+1})+1} \|\Delta \bar{\bm u}_N^{n+1}\|^2,\qquad d=2,\\
-\nu\frac{r^{n+1}}{E(\bar{\bm u}_N^{n+1})+1} \|\nabla \bar{\bm u}_N^{n+1}\|^2,\qquad d=3;
\end{array}
\right.\label{eq: NSsavE2}\\
& \xi^{n+1}=\frac{r^{n+1}}{E(\bar{\bm u}_N^{n+1})+1};\label{eq: NSsavE3}\\
& \bm u_N^{n+1}=\eta_{k}^{n+1}\bar{\bm u}_N^{n+1} \;\text{ with } \eta^{n+1}_{k}=1-(1-\xi^{n+1})^{k},\label{eq: NSsavE4}
\end{align}
\end{subequations}
\end{small}
where $\alpha_k,$ the operators $A_k$ and $B_k$ $(k=1,2,3,4,5)$ are given in \eqref{eq:bdf1}-\eqref{eq:bdf5}.

Note that Fourier approximation of Poisson type equations leads to diagonal matrix in the frequency space, so the above scheme can be efficiently implemented as follows:
\begin{enumerate}[label=(\roman*)]
\item Compute $\bar {\bm u}_N^{n+1}$ from \eqref{eq: NSsavE1}, which is a Poisson-type equation;
\item  With $\bar {\bm u}_N^{n+1}$  known, determine $r^{n+1}$ explicitly  from \eqref{eq: NSsavE2};
\item Compute $\xi^{n+1}$, $\eta_k^{n+1}$ and  $\bm u_N^{n+1}$ from \eqref{eq: NSsavE3} and \eqref{eq: NSsavE4}, goto the next step.
\end{enumerate}
Finally, whenever pressure is needed, it can be computed from
\begin{equation}\label{prePoissonN}
 \Delta p_N^{n+1}=-\Pi_N \nabla \cdot (\bm u_N^{n+1} \cdot \nabla \bm u_N^{n+1}).
\end{equation}

\subsection{Stability results}
We have the following results concerning the stability of the above schemes.
\begin{theorem}\label{stableThm} 
Let $\bm u_0\in \bm V\cap \bm H^2_p$ if $d=2$ and $\bm u_0\in \bm V$ if $d=3$.
Let $\{r^{k},\,\xi^{k},\,\bar{\bm u}_N^k,\,{\bm u}_N^k\}$ be the solution of  the fully discrete  scheme  \eqref{eq: NSsav}. Then,
given $r^n \ge 0$, we have $r^{n+1} \ge 0$, $\xi^{n+1} \ge 0$,  and for any $k$, the  scheme \eqref{eq: NSsav} is unconditionally energy stable in the sense that
\begin{equation}\label{eq: energystable}
r^{n+1}-r^{n}=\left\{
\begin{array}{lr}
-\delta t \nu \xi^{n+1}\|\Delta \bar{\bm u}_N^{n+1}\|^2 \le 0,\qquad d=2,\\
-\delta t \nu \xi^{n+1}\|\nabla \bar{\bm u}_N^{n+1}\|^2 \le 0,\qquad d=3,
\end{array}
\right. \,\quad \forall n.
\end{equation}
Furthermore, there exists $M_k>0$ such that
\begin{equation}\label{eq: L2bound}
\begin{array}{lr}
\|\nabla {\bm u}_N^{n+1}\|^2\le M^2_k,\qquad d=2,\\
\|{\bm u}^{n+1}_N\|^2 \le M^2_k,\qquad d=3,
\end{array}
\,\quad \forall n.
\end{equation}
Same results hold for the semi-discrete schemes \eqref{eq: NSsavb} with $\bar{\bm u}_N^{n+1}$ and ${\bm u}_N^{n+1}$ in \eqref{eq: energystable} and \eqref{eq: L2bound} be replaced by $\bar{\bm u}^{n+1}$ and ${\bm u}^{n+1}$.
\end{theorem}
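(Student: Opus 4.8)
The plan is to establish the three claims in order: nonnegativity of $r^{n+1}$ and $\xi^{n+1}$, the energy law \eqref{eq: energystable}, and the uniform bound \eqref{eq: L2bound}. For the first two, I would work directly with \eqref{eq: NSsavE2}. Solving that scalar equation for $r^{n+1}$ gives
\begin{equation*}
r^{n+1}=\frac{r^n}{1+\delta t\,\nu\,\|\Delta \bar{\bm u}_N^{n+1}\|^2/(E(\bar{\bm u}_N^{n+1})+1)}\quad(d=2),
\end{equation*}
and similarly with $\|\nabla\bar{\bm u}_N^{n+1}\|^2$ for $d=3$; since the denominator is $\ge 1>0$ and $r^n\ge 0$, we get $0\le r^{n+1}\le r^n$. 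Then $\xi^{n+1}=r^{n+1}/(E(\bar{\bm u}_N^{n+1})+1)\ge 0$ by \eqref{eq: NSsavE3}, and multiplying \eqref{eq: NSsavE2} through by $\delta t$ and using \eqref{eq: NSsavE3} to substitute $r^{n+1}/(E(\bar{\bm u}_N^{n+1})+1)=\xi^{n+1}$ yields \eqref{eq: energystable} immediately. Note this also shows $\{r^n\}$ is a nonincreasing nonnegative sequence, hence bounded by $r^0=E(\bm u_0)+1$; by Lemma~\ref{Fourier} (or simply $\|\Pi_N\bm u_0\|_1\le\|\bm u_0\|_1$) the initialization gives $r^0$ controlled by the data.

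The substantive part is \eqref{eq: L2bound}. I would first show that $E(\bar{\bm u}_N^{n+1})$ is controlled, then transfer this to $\bm u_N^{n+1}$ via the relation $\bm u_N^{n+1}=\eta_k^{n+1}\bar{\bm u}_N^{n+1}$. The key observation is that $\xi^{n+1}=r^{n+1}/(E(\bar{\bm u}_N^{n+1})+1)$ combined with $0\le r^{n+1}\le r^0$ gives
\begin{equation*}
\xi^{n+1}\,E(\bar{\bm u}_N^{n+1})=\frac{r^{n+1}}{E(\bar{\bm u}_N^{n+1})+1}\,E(\bar{\bm u}_N^{n+1})\le r^{n+1}\le r^0,
\end{equation*}
so $\xi^{n+1}E(\bar{\bm u}_N^{n+1})$ is uniformly bounded. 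Now $E(\bm u_N^{n+1})=(\eta_k^{n+1})^2 E(\bar{\bm u}_N^{n+1})$, and since $\eta_k^{n+1}=1-(1-\xi^{n+1})^k$ with $\xi^{n+1}\ge 0$, elementary estimates on the polynomial $p(\xi)=(1-(1-\xi)^k)^2/\xi$ show that $(\eta_k^{n+1})^2\le C_k\,\xi^{n+1}$ for all $\xi^{n+1}\ge 0$ in the relevant range (for $0\le\xi\le 2$ this is a bounded continuous function extended by $0$ at $\xi=0$; for large $\xi$ one checks the growth of $(1-\xi)^{2k}/\xi$ separately, but in fact $\xi^{n+1}$ is itself bounded since $\xi^{n+1}\le r^0/1=r^0$ when $E(\bar{\bm u}_N^{n+1})\ge$ its minimum, so only a compact range of $\xi$ matters). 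Hence $E(\bm u_N^{n+1})\le C_k\,\xi^{n+1}E(\bar{\bm u}_N^{n+1})\le C_k r^0=:M_k^2/2$, which is precisely \eqref{eq: L2bound} in both dimensions after recalling the definition of $E$.

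The main obstacle — and the point requiring the most care — is the polynomial inequality $(\eta_k^{n+1})^2\le C_k\xi^{n+1}$ and making sure the constant $C_k$ is genuinely independent of $n$ and of the discretization parameters. The cleanest route is: since $\xi^{n+1}=r^{n+1}/(E(\bar{\bm u}_N^{n+1})+1)\le r^{n+1}\le r^0$ is bounded above by a data-dependent constant, $\xi^{n+1}$ ranges over a fixed compact interval $[0,\Lambda]$ with $\Lambda=r^0$; on that interval the continuous function $\xi\mapsto(1-(1-\xi)^k)^2/\xi$ (defined to be $k^2$ at $\xi=0$ by L'Hôpital) attains a finite maximum $C_k=C_k(\Lambda)$, giving the bound. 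One should double-check the case $d=2$ uses $E(\bm u)=\frac12\|\nabla\bm u\|^2$ so that the conclusion is the $H^1$ bound, whereas $d=3$ uses $E(\bm u)=\frac12\|\bm u\|^2$ giving only the $L^2$ bound — consistent with the statement. Finally, the semi-discrete case \eqref{eq: NSsavb} is handled by the identical argument, simply dropping the projection $\Pi_N$ and replacing $\bar{\bm u}_N^{n+1},\bm u_N^{n+1}$ by $\bar{\bm u}^{n+1},\bm u^{n+1}$ throughout; no step used finite-dimensionality of $S_N$ in an essential way.
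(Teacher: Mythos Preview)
Your proposal is correct and follows essentially the same route as the paper: solve \eqref{eq: NSsavE2} for $r^{n+1}$ to get nonnegativity and monotonicity, read off $\xi^{n+1}\ge 0$ and the energy law, then exploit the factorization $\eta_k^{n+1}=\xi^{n+1}P_{k-1}(\xi^{n+1})$ together with the bound $\xi^{n+1}(E(\bar{\bm u}_N^{n+1})+1)=r^{n+1}\le r^0$ to control $E(\bm u_N^{n+1})=(\eta_k^{n+1})^2E(\bar{\bm u}_N^{n+1})$. One small slip: the limit of $(1-(1-\xi)^k)^2/\xi$ at $\xi=0$ is $0$, not $k^2$ (you may be thinking of $\eta_k/\xi\to k$), but this does not affect your argument since continuity on the compact interval $[0,r^0]$ is all that is needed.
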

\begin{proof}
Since the proofs for the fully discrete scheme \eqref{eq: NSsav} and for the semi-discrete scheme \eqref{eq: NSsavb} are essentially the same, we shall only give the proof for the fully discrete scheme \eqref{eq: NSsav} below.

Given $r^n \ge 0$. Since $E(\bar{\bm u}_N^{n+1}) >0$, it follows from \eqref{eq: NSsavE2} that
\begin{equation*}
r^{n+1}=\left\{
\begin{array}{lr}
\frac{r^{n}}{1+\delta t \nu\frac{\|\Delta \bar{\bm u}_N^{n+1}\|^2 }{E(\bar{\bm u}_N^{n+1})+1}} \ge 0,\qquad d=2,\\
\frac{r^{n}}{1+\delta t \nu\frac{\|\nabla \bar{\bm u}_N^{n+1}\|^2 }{E(\bar{\bm u}_N^{n+1})+1}} \ge 0,\qquad d=3.
\end{array}
\right.
\end{equation*}
Then we derive from \eqref{eq: NSsavE3} that $\xi^{n+1} \ge 0$ and obtain \eqref{eq: energystable}.

Denote $M:=r^0=E[\bm u(\cdot,0)]$, then \eqref{eq: energystable} implies $r^n \le M,\, \forall n$. It then follows from \eqref{eq: NSsavE3}  that
\begin{equation}\label{eq: xibound}
|\xi^{n+1}|=\frac{r^{n+1}}{E(\bar{\bm u}_N^{n+1})+1} \le \left\{
\begin{array}{lr}
\frac{2M}{\|\nabla \bar{\bm u}_N^{n+1}\|^2+2},\qquad d=2,\\
\frac{2M}{\|\bar{\bm u}_N^{n+1}\|^2+2},\qquad d=3.
\end{array}
\right.
\end{equation}
Since $\eta_k^{n+1}=1-(1-\xi^{n+1})^{k}$, we have $\eta_k^{n+1}=\xi^{n+1}P_{k-1}(\xi^{n+1})$ with   $P_{k-1}$ being a polynomial  of degree $k-1$. Then, we derive from \eqref{eq: xibound} that  there exists $M_k>0$ such that
\begin{equation*}\label{eq: etabound}
|\eta_k^{n+1}|= |\xi^{n+1}P_{k-1}(\xi^{n+1})| \le \left\{
\begin{array}{lr}
\frac{{M_k}}{\|\nabla \bar{\bm u}_N^{n+1}\|^2+2},\qquad d=2,\\
\frac{{M_k}}{\|\bar{\bm u}_N^{n+1}\|^2+2},\qquad d=3,
\end{array}
\right.
\end{equation*}
which, along with  $\bm u_N^{n+1}=\eta_k^{n+1}\bar{\bm u}_N^{n+1}$, implies
\begin{equation*}
\begin{array}{lr}
\|\nabla {\bm u}_N^{n+1}\|^2=(\eta_k^{n+1})^2\|\nabla \bar{\bm u}_N^{n+1}\|^2 \le \big(\frac{{M_k}}{\|\nabla \bar{\bm u}_N^{n+1}\|^2+2}\big)^2\|\nabla \bar{\bm u}_N^{n+1}\|^2 \le M^2_k,\qquad d=2,\\
\|{\bm u}_N^{n+1}\|^2=(\eta_k^{n+1})^2\|\bar{\bm u}_N^{n+1}\|^2 \le \big(\frac{{M_k}}{\|\bar{\bm u}_N^{n+1}\|^2+2}\big)^2\|\bar{\bm u}_N^{n+1}\|^2 \le M^2_k,\qquad d=3.
\end{array}
\end{equation*}
\end{proof}

\subsection{Numerical examples}
Before we start the error analysis,  we provide numerical examples to demonstrate the convergence rates and compare the performance of the  schemes with different orders on a classical benchmark problem.

\textit{Example 1: Convergence test.} Consider the
Navier-Stokes equations \eqref{eq:NS} with an external forcing ${\bm f}$ in $\Omega=(0,2)\times (0,2)$ with periodic boundary condition
such that the  exact solution is given by
\begin{align*}
& u_1(x,y)=\pi \exp(\sin(\pi x))\exp(\sin(\pi y))\cos(\pi y) \sin^2(t);\\
& u_2(x,y)=-\pi \exp(\sin(\pi x))\exp(\sin(\pi y))\cos(\pi x) \sin^2(t);\\
& p(x,y)=\exp(\cos(\pi x)\sin(\pi y))\sin^2(t).
\end{align*}
We set $\nu=1$ in \eqref{eq:NS}, and use the Fourier spectral method with $40\times 40$ modes for space discretization so that the spatial discretization error is negligible with respect to the time discretization error.
 In Figures \ref{fig: NStest1}, we  plot the  convergence rate of the $H^1$ error for the velocity and the pressure at $T=1$  by using first- to fourth-order schemes. We  observe  the expected convergence rates  for both the velocity and the pressure.

\begin{figure}[htbp]
\begin{center}
\subfigure[BDF1 errors of velocity and pressure]{ \includegraphics[scale=.35]{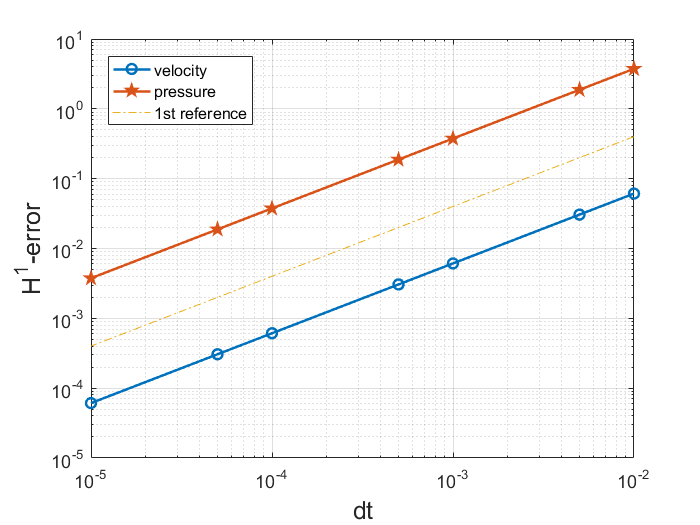}}
  \subfigure[BDF2 errors of velocity and pressure]{ \includegraphics[scale=.35]{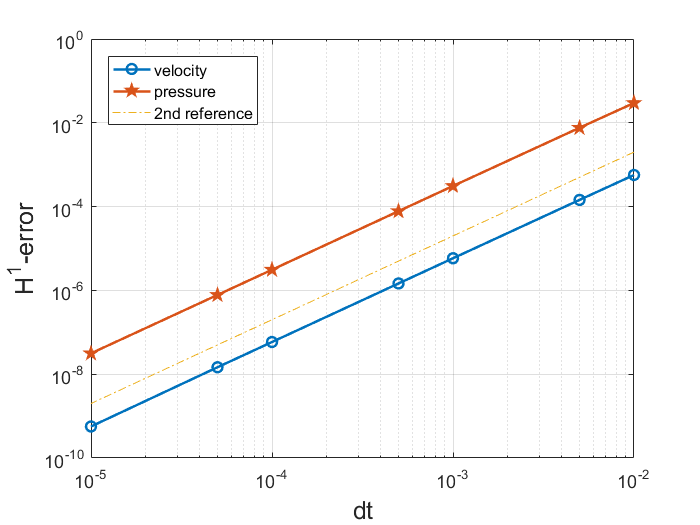}}\\
    \subfigure[BDF3 errors of velocity and pressure]{ \includegraphics[scale=.35]{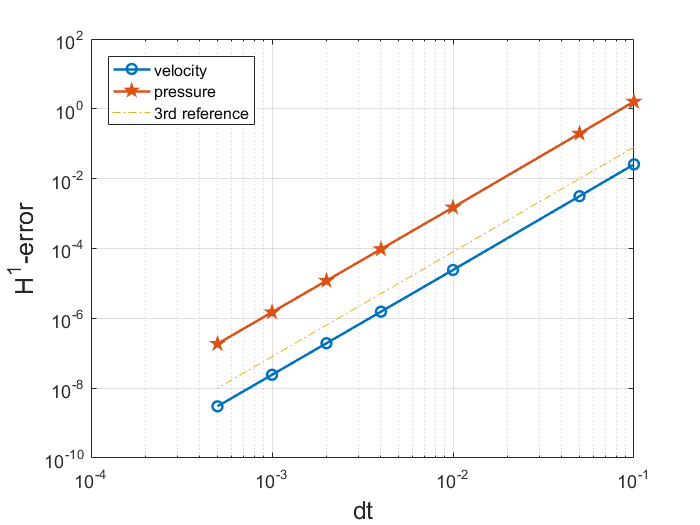}}
      \subfigure[BDF4 errors of velocity and pressure ]{ \includegraphics[scale=.35]{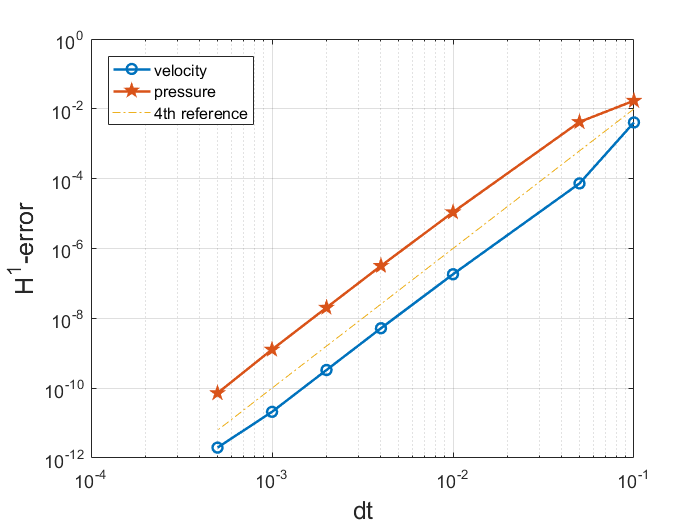}}
  \caption{\small Convergence test for the Navier-stokes equations using SAV/BDF$k$ $(k=1,2,3,4)$ }
  \label{fig: NStest1}
\end{center}
\end{figure}
\textit{Example 2: Double shear layer problem \cite{bell1989second,brown1995,di2005moving}.}  Consider  the Navier-Stokes equations \eqref{eq:NS} in $\Omega=(0,1)\times (0,1)$ with periodic boundary conditions and the initial condition  given by
\begin{align*}
&u_1(x,y,0)= \left \{
\begin{aligned}
&\tanh(\rho (y-0.25)),\,\,  y\le 0.5\\
&\tanh(\rho (0.75-y)),\,\,  y>0.5
\end{aligned}
\right. ,\\
&u_2(x,y,0)= \delta \sin(2 \pi x),
\end{align*}
where $\rho$ determines the slope of the shear layer and $\delta$ represents the size of the perturbation. In our simulations, we fix $\delta=0.05$.
\begin{figure}[htbp]
\begin{center}
  \subfigure[ 1st order]{ \includegraphics[scale=.35]{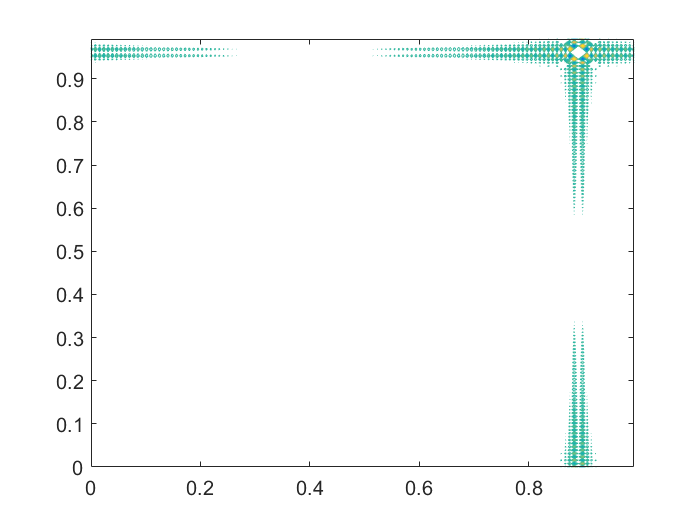}}
    \subfigure[2nd-order ]{ \includegraphics[scale=.35]{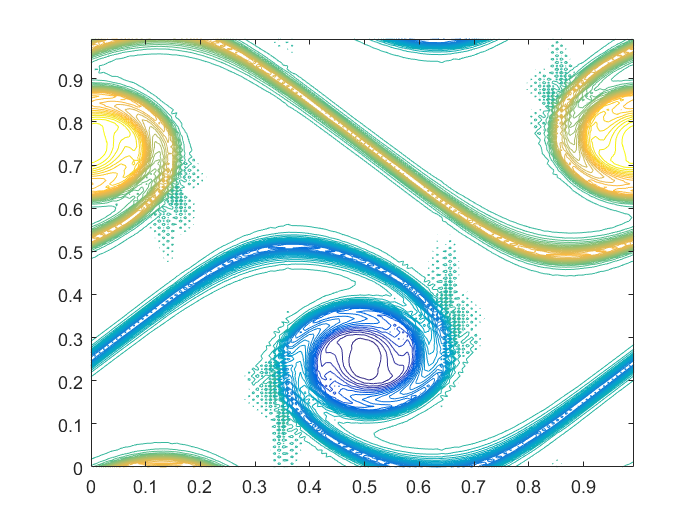}}\\
      \subfigure[3rd-order ]{ \includegraphics[scale=.35]{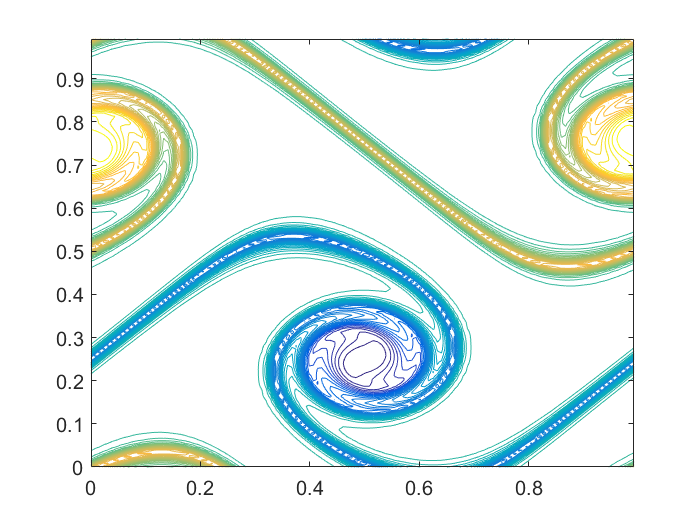}}
      \subfigure[4th-order  ]{ \includegraphics[scale=.35]{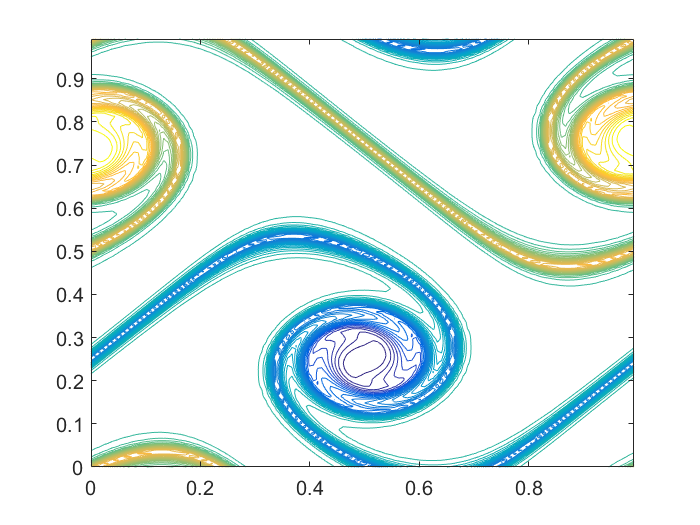}}
  \caption{\small  Thick layer problem: vorticity contours at T=1.2 with $\rho=30$, $\nu=0.0001$ and $\delta t=8 \times 10^{-4}$ }
  \label{fig: Thick}
\end{center}
\end{figure}
We first test a \textit{thick layer problem} by choosing $\rho=30$ and $\nu=0.0001$. We use the Fourier spectral method with $128\times 128$ modes for the space discretization, and set $\delta t=8 \times 10^{-4}$. In Figures \ref{fig: Thick}, we show the vorticity contours at $T=1.2$ obtained with first- to fourth-order schemes.  We observe that correct solution is obtained with the third- and fourth-order schemes while the first-order scheme gives totally wrong result  and the second-order scheme leads to inaccurate result.  

\begin{figure}[htbp]
\begin{center}
  \subfigure[1st-order]{ \includegraphics[scale=.35]{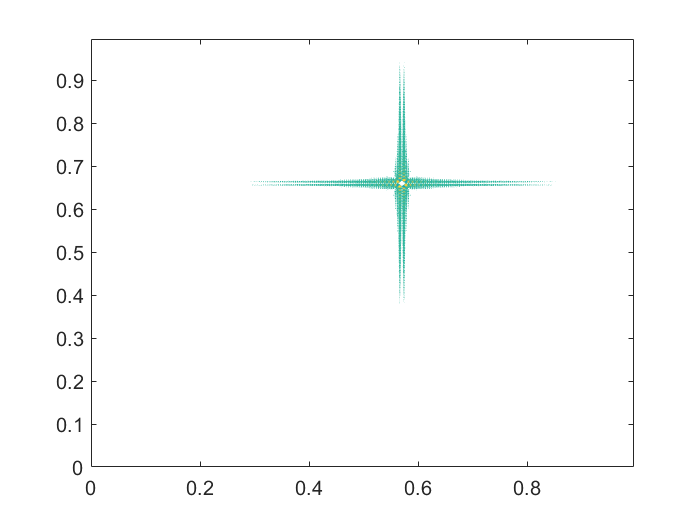}}
    \subfigure[ 2nd-order]{ \includegraphics[scale=.35]{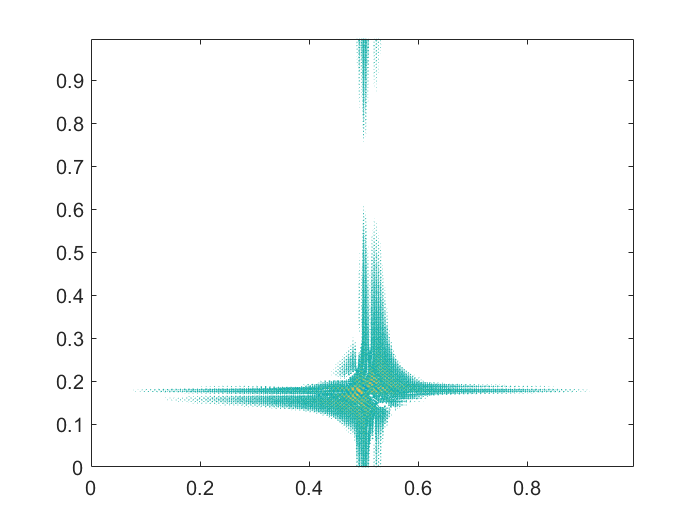}}\\
      \subfigure[3rd-order ]{ \includegraphics[scale=.35]{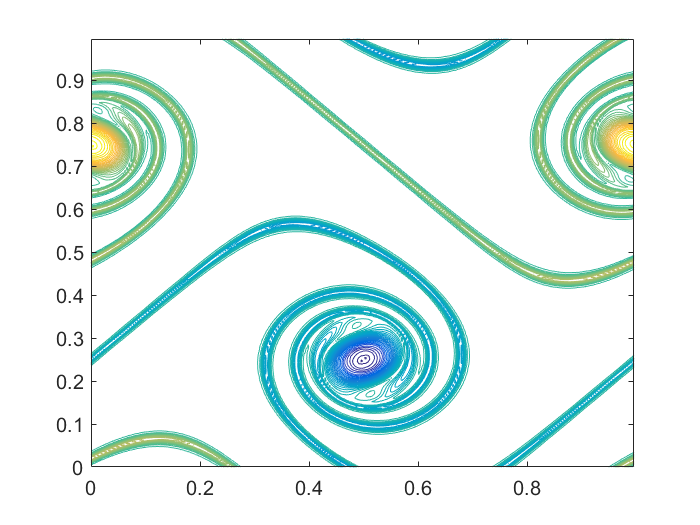}}
      \subfigure[4th-order  ]{ \includegraphics[scale=.35]{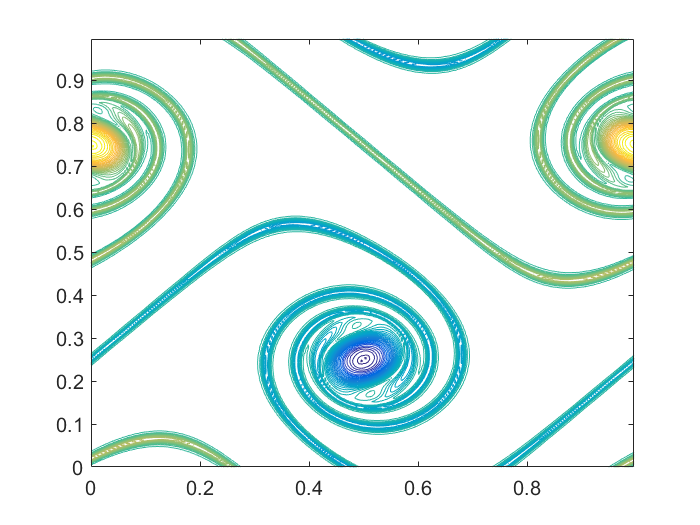}}
  \caption{\small  Thin layer problem: vorticity contours at T=1.2 with $\rho=100$, $\nu=0.00005$ and $\delta t=3 \times 10^{-4}$}
  \label{fig: Thin}
\end{center}
\end{figure}

Next, we test a \textit{thin layer problem} by choosing $\rho=100$ and $\nu=0.00005$. We use first- to the fourth-order schemes with $256\times 256$ Fourier modes and $\delta t=3 \times 10^{-4}$. In Figures \ref{fig: Thin}, we plot the vorticity contours at $T=1.2$. We observe that correct solutions are obtained with the third- and fourth-order schemes while first- and second-order schemes lead to wrong results.

In order to examine the effect of SAV approach, we
plot in Figure \ref{fig: Thin2} evolution of the SAV factor $\eta=1-(1-\xi)^2$ and  the vorticity contours at $T=1.2$, computed with the second-order scheme with $\delta t=2.5 \times 10^{-4}$.  We observe that  at around $t=1.05$, where the usual semi-implicit second-order scheme blows up, the SAV factor dips slightly to allow the scheme  continue to produce correct simulation.

\begin{rem}
 These two tests indicate that for high Reynolds number flows with complex structures, higher-order schemes are preferred over lower-order schemes, as much smaller time steps have to be used to obtain correct solutions with lower-order schemes.

Note that if we use the usual semi-implicit schemes with the same time steps in the above tests, the first- and second-order schemes would blow up. So the SAV approach can effectively prevent the numerical solution from blowing up although sufficient small time steps are needed to capture the correct solution. Thus, one is advised to adopt a suitable adaptive time stepping  to take  full advantage of the SAV schemes.
\end{rem}

\begin{figure}[htbp]
\begin{center}
    \subfigure[Evolution of $\eta$ ]{ \includegraphics[scale=.35]{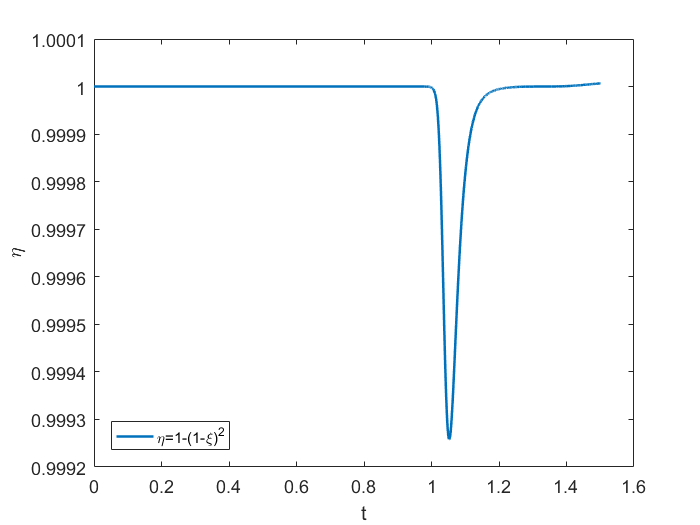}}
     \subfigure[Vorticity contours at $t=1.2$]{ \includegraphics[scale=.35]{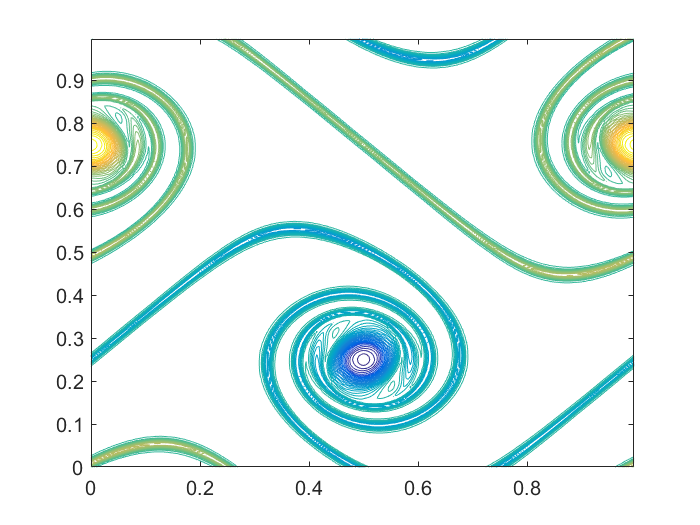}}
  \caption{\small  Thin layer problem: second-order scheme with $\rho=100$, $\nu=0.00005$ and $\delta t=2.5 \times 10^{-4}$}
  \label{fig: Thin2}
\end{center}
\end{figure}

\section{Error analysis}
In this section, we carry out a unified error analysis for the fully discrete schemes   \eqref{eq: NSsav} with $1\le k\le5$, and state, as  corollaries,  similar results for the semi-discrete schemes  \eqref{eq: NSsavb}.

We denote
\begin{equation*}
\begin{split}
& t^n=n\,\delta t,\quad s^n=r^n-r(t^n),\\
& \bar{\bm e}_N^n=\bar{\bm u}_N^{n}- \Pi_N \bm u(\cdot,t^n),\quad  \bm e_N^n={\bm u}_N^{n}- \Pi_N \bm u(\cdot,t^n),\quad {\bm e}_\Pi^n=\Pi_N \bm u(\cdot,t^n) -\bm u(\cdot,t^n),\\
& \bar{\bm e}^n=\bar{\bm u}_N^{n}-\bm u(\cdot,t^n)=\bar{\bm e}_N^n+{\bm e}_\Pi^n,\quad
 \bm e^n=\bm u_N^n-\bm u(\cdot,t^n)=\bm e_N^n+{\bm e}_\Pi^n.
\end{split}
\end{equation*}
To simplify the notations, we  dropped the dependence on $N$ for $\bar{\bm e}^n$ and ${\bm e}^n$ in the above, and will do so for some other quantities in the sequel.

\subsection{Several useful lemmas}
We will frequently use the following two discrete versions of the Gronwall lemma.
\begin{lemma}\label{Gron1}
\textbf{(Discrete Gronwall Lemma 1 \cite{STWbook})} Let $y^k,\,h^k,\,g^k,\,f^k$ be four nonnegative sequences satisfying
\begin{equation*}
y^n+\delta t \sum_{k=0}^{n}h^k \le B+\delta t \sum_{k=0}^{n}(g^ky^k+f^k)\; \text{ with }\; \delta t \sum_{k=0}^{T/\delta t} g^k \le M,\, \forall\, 0\le n \le T/\delta t.
\end{equation*}
We assume $\delta t\, g^k <1$ for all $k$, and let $\sigma=\max_{0\le k \le T/\delta t}(1-\delta t g^k)^{-1}$. Then
\begin{equation*}
y^n+\delta t \sum_{k=1}^{n}h^k \le \exp(\sigma M)(B+\delta t\sum_{k=0}^{n}f^k),\,\,\forall\, n\le T/\delta t.
\end{equation*}
\end{lemma}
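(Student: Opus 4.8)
The plan is to carry out the standard argument for a discrete Gronwall inequality of this type; the only non-automatic point is how to handle the ``diagonal'' term $\delta t\, g^n y^n$ sitting inside the sum on the right-hand side. First I would write the hypothesis at level $n$, peel off the $k=n$ term from $\delta t\sum_{k=0}^n g^k y^k$, and move it to the left to get
\[
(1-\delta t\, g^n)\,y^n+\delta t\sum_{k=0}^n h^k \;\le\; B+\delta t\sum_{k=0}^n f^k+\delta t\sum_{k=0}^{n-1} g^k y^k .
\]
Since $\delta t\, g^n<1$ by assumption, the coefficient $1-\delta t g^n$ is positive and I may divide by it; because $1-\delta t g^n\le 1$, the $h$-sum on the left only gets larger, so after also discarding the nonnegative $k=0$ term from it and writing $E_n:=B+\delta t\sum_{k=0}^n f^k$ (nondecreasing in $n$ since $f^k\ge 0$), I obtain
\[
y^n+\delta t\sum_{k=1}^n h^k \;\le\; \sigma E_n+\sigma\,\delta t\sum_{k=0}^{n-1} g^k y^k ,
\]
using $(1-\delta t g^n)^{-1}\le\sigma$.

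Next, setting $z^n:=y^n+\delta t\sum_{k=1}^n h^k\ge y^n$, I would note that the previous display gives $z^n\le \sigma E_n+\sigma\,\delta t\sum_{k=0}^{n-1} g^k z^k$, which is now in the classical scalar form with a nondecreasing forcing term $\sigma E_n$ and nonnegative multipliers $\sigma\delta t g^k$. The elementary discrete Gronwall estimate then yields $z^n\le \sigma E_n\prod_{k=0}^{n-1}(1+\sigma\delta t g^k)$, and bounding the product by $\exp\!\big(\sigma\,\delta t\sum_{k=0}^{n-1} g^k\big)\le\exp(\sigma M)$ via $1+x\le e^x$ together with the hypothesis $\delta t\sum_{k=0}^{T/\delta t} g^k\le M$ gives the asserted bound. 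If one wants the cleaner constant $\exp(\sigma M)$ with no prefactor, one should instead keep the $k$-dependent factors $(1-\delta t g^k)^{-1}$ through an induction on $n$ and use $-\ln(1-x)\le x/(1-x)\le\sigma x$ to convert their product directly into $\exp(\sigma\,\delta t\sum g^k)$.

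There is no genuine obstacle: the argument is elementary real-analysis bookkeeping. The one place that needs attention is the division by $1-\delta t g^n$ — one must verify it preserves the inequality direction (it does, the factor being positive) and that it does not destroy the $h$-terms one wants to retain in the conclusion (it does not, the factor being $\le 1$). I would also record that the nonnegativity of all four sequences is used essentially: for $y^k\le z^k$ in the self-referential step, for the monotonicity of $E_n$ so the induction hypothesis can be substituted termwise, and for $\delta t\, g^n\ge 0$ so that $(1-\delta t g^n)^{-1}\ge 1$.
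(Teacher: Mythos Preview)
The paper does not prove this lemma; it is quoted from the textbook \cite{STWbook} and used as a black box, so there is no in-paper argument to compare against. Your proposed argument is the standard one and is correct in outline; the one point worth flagging is that your first pass, where you uniformly bound $(1-\delta t g^n)^{-1}\le\sigma$ before iterating, yields $z^n\le \sigma E_n\exp(\sigma M)$ rather than the stated $E_n\exp(\sigma M)$, i.e.\ an extraneous prefactor $\sigma$. You already note this and sketch the remedy --- keep the step-dependent factors $(1-\delta t g^k)^{-1}$ through the induction and only at the end convert $\prod_k(1-\delta t g^k)^{-1}=\exp\!\big(-\sum_k\ln(1-\delta t g^k)\big)\le\exp\!\big(\sigma\,\delta t\sum_k g^k\big)$ via $-\ln(1-x)\le x/(1-x)\le\sigma x$ --- and that does recover the constant exactly as stated.
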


\begin{lemma}\label{Gron2}
\textbf{(Discrete Gronwall Lemma 2 \cite{shen1990long})} Let $,a_n,\,b_n,\,c_n,$ and $d_n$ be four nonnegative sequences satisfying \begin{equation*}
a_m+\tau \sum_{n=1}^{m} b_n \le \tau \sum_{n=0}^{m-1}a_n d_n +\tau \sum_{n=0}^{m-1} c_n+ C, \,m \ge 1,
\end{equation*}
where $C$ and $\tau$ are two positive constants.
Then
\begin{equation*}
a_m+\tau \sum_{n=1}^{m} b_n \le \exp\big(\tau \sum_{n=0}^{m-1} d_n \big)\big(\tau \sum_{n=0}^{m-1}c_n+C \big),\,m \ge 1.
\end{equation*}
\end{lemma}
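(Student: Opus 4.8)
The plan is to establish the combined bound on the whole left-hand quantity $a_m+\tau\sum_{n=1}^{m}b_n$ in a single induction, rather than first estimating $a_m$ alone and then recovering the $b_n$-sum in a second pass. To this end I would set
\[
A_m := a_m+\tau\sum_{n=1}^{m}b_n,\qquad S_m := \tau\sum_{n=0}^{m-1}c_n+C,
\]
and note that $S_m$ is nondecreasing in $m$ and that nonnegativity of $b_n$ gives $a_n\le A_n$. Since $d_n\ge 0$, replacing $a_n$ by $A_n$ inside the hypothesis preserves the inequality and turns it into the closed recursion
\[
A_m \le \tau\sum_{n=0}^{m-1}d_n A_n + S_m,\qquad m\ge 1.
\]
This reduction is the one genuinely useful move: it is precisely the nonnegativity of $b_n$ that lets the dissipation sum $\tau\sum b_n$ ride along inside $A_m$ for free, so that a single Gronwall argument controls both terms at once.

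Next I would prove by induction on $m$ the sharper, exponential-free estimate
\[
A_m \le S_m\prod_{n=0}^{m-1}(1+\tau d_n).
\]
The base case is the $m=0$ instance $A_0=a_0\le C=S_0$ (with empty sum and empty product equal to $1$); this is the implicit normalization that the constant $C$ dominates the initial datum, and it is indispensable, since the recursion feeds $A_0$ back into the right-hand side. For the inductive step I substitute the inductive bounds $A_n\le S_n\prod_{k=0}^{n-1}(1+\tau d_k)$ into the recursion, use monotonicity $S_n\le S_m$ to pull $S_m$ out of the sum, and reduce to the purely telescoping identity
\[
\tau\sum_{n=0}^{m-1}d_n\prod_{k=0}^{n-1}(1+\tau d_k)=\prod_{n=0}^{m-1}(1+\tau d_n)-1,
\]
which holds because the partial products $P_n:=\prod_{k=0}^{n-1}(1+\tau d_k)$ satisfy $P_{n+1}-P_n=\tau d_n P_n$, so the sum telescopes to $P_m-P_0$. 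Adding the leftover $+S_m$ then returns exactly $S_m\prod_{n=0}^{m-1}(1+\tau d_n)$, closing the induction with an equality rather than an estimate.

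Finally, the stated conclusion follows from the elementary bound $1+\tau d_n\le e^{\tau d_n}$, giving
\[
\prod_{n=0}^{m-1}(1+\tau d_n)\le\exp\Big(\tau\sum_{n=0}^{m-1}d_n\Big),
\]
hence $A_m\le S_m\exp\big(\tau\sum_{n=0}^{m-1}d_n\big)$, which is precisely the assertion. I do not expect a serious obstacle here, as this is a standard discrete Gronwall inequality; the only delicate points are the passage from $a_n$ to $A_n$ via nonnegativity of $b_n$ and the recognition that monotonicity of $S_m$ is what permits factoring it out of the inductive sum. Carrying the product form (rather than immediately invoking $e^x\ge 1+x$ inside the induction) keeps the inductive step exact and isolates the only inequality to the very last line.
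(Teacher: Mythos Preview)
The paper does not actually supply a proof of this lemma: it is stated with a citation to \cite{shen1990long} and then used as a tool in the error analysis. So there is no paper proof to compare against.

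Your argument is the standard one and is correct. The reduction from $a_n$ to $A_n$ via nonnegativity of $b_n$, the monotonicity of $S_m$, the telescoping identity for the partial products, and the final passage $1+\tau d_n\le e^{\tau d_n}$ are all valid. You also correctly flag the one genuine subtlety: the base case $A_0=a_0\le C=S_0$ is \emph{not} a consequence of the hypothesis as literally written (which is stated only for $m\ge 1$), and the lemma is in fact false without it---take $\tau=1$, $b_n=c_n=0$, $d_0=1$, $d_n=0$ for $n\ge 1$, $C=1$, $a_0=100$, $a_m=101$ for $m\ge 1$; the hypothesis holds but the conclusion fails at $m=1$. The intended reading (and the one in the cited source) is that the hypothesis extends to $m=0$ with empty sums, giving $a_0\le C$; your remark that this normalization is ``indispensable'' is exactly right and worth stating explicitly.
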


 Based on Dahlquist's G-stability theory,
Nevanlinna and Odeh  \cite{nevanlinna1981} proved the following result  which  plays an essential role in our error analysis.

\begin{lemma}\label{lemmaODEH}
 For $1\le k\le 5$, there exist $0 \le \tau_k < 1$, a positive definite symmetric matrix $G=(g_{ij}) \in \mathcal{R}^{k,k}$ and real numbers $\delta_0,..., \delta_k$ such that
\begin{equation*}
\begin{split}
\Big(\alpha_k u^{n+1}-A_k(u^n), u^{n+1}-\tau_k u^n \Big)&=\sum_{i,j=1}^{k}g_{ij}(u^{n+1+i-k},u^{n+1+j-k})\\
&-\sum_{i,j=1}^{k}g_{ij}(u^{n+i-k},u^{n+j-k})
+\|\sum_{i=0}^{k} \delta_i u^{n+1+i-k}\|^2,
\end{split}
\end{equation*}
where the smallest possible values of $\tau_k$ are
\begin{equation*}
\tau_1=\tau_2=0,\quad \tau_3=0.0836,\quad \tau_4=0.2878,\quad \tau_5=0.8160,
\end{equation*}
and $\alpha_k$, $A_k$ are defined in \eqref{eq:bdf3}-\eqref{eq:bdf5}.
\end{lemma}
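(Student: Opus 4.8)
The plan is to recast the claimed identity as the classical Dahlquist $G$-stability statement for the BDF-$k$ method, supplying the multiplier $u^{n+1}-\tau_k u^n$ from the Nevanlinna--Odeh construction. First I would associate to the left-hand operator its generating polynomial. Writing $\zeta$ for the backward shift (so that a factor $\zeta$ lowers the time index by one), a direct expansion shows that $\alpha_k u^{n+1}-A_k(u^n)$ has generating polynomial $\rho_k(\zeta)=\sum_{\ell=1}^{k}\frac1\ell(1-\zeta)^\ell$; for instance $\rho_2(\zeta)=\frac32-2\zeta+\frac12\zeta^2$ reproduces $\frac32 u^{n+1}-2u^n+\frac12 u^{n-1}$, and similarly for $k=1,3,4,5$ with the coefficients in \eqref{eq:bdf1}--\eqref{eq:bdf5}. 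The test function $u^{n+1}-\tau_k u^n$ corresponds to the multiplier polynomial $\mu_k(\zeta)=1-\tau_k\zeta$.

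Next I would invoke Dahlquist's equivalence theorem: the existence of a real symmetric positive definite $G=(g_{ij})$ and real numbers $\delta_0,\dots,\delta_k$ for which the stated quadratic-form identity holds is equivalent to the positive-real condition $\operatorname{Re}\frac{\rho_k(\zeta)}{\mu_k(\zeta)}\ge 0$ for all $|\zeta|\le 1$. Granting this condition (the object of the last step), both $G$ and the $\delta_i$ are obtained constructively: on the unit circle $\zeta=e^{i\theta}$ the quantity $\operatorname{Re}\big[\overline{\mu_k(\zeta)}\,\rho_k(\zeta)\big]$ is a nonnegative real trigonometric polynomial, so by a Fej\'er--Riesz factorization it equals $\big|\sum_{i=0}^k\delta_i\zeta^{i}\big|^2$, which produces precisely the remainder term $\big\|\sum_{i=0}^k\delta_i u^{n+1+i-k}\big\|^2$; the remaining part of the bilinear form in the $k$-vector $U^n=(u^{n+1-k},\dots,u^n)$ is a telescoping difference $\|U^{n+1}\|_G^2-\|U^n\|_G^2$, and reading off its coefficients defines $G$. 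Positive definiteness of $G$ (rather than mere semidefiniteness) follows from the zero-stability of BDF-$k$ for $k\le 6$, i.e. from the fact that $\rho_k$ has no roots on or outside the unit circle other than the simple root at $\zeta=1$.

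The main obstacle is the last step: verifying $\operatorname{Re}\frac{\rho_k(\zeta)}{1-\tau_k\zeta}\ge 0$ on $|\zeta|\le 1$ and pinning down the smallest admissible $\tau_k$. For $k=1,2$ the BDF method is A-stable, so the bare multiplier $\tau_k=0$ already works. For $k=3,4,5$ A-stability fails and one genuinely needs $\tau_k>0$; clearing the denominator and setting $\zeta=e^{i\theta}$ reduces the requirement to the nonnegativity of an explicit real polynomial in $\cos\theta$ whose coefficients depend on $\tau_k$, and the threshold values $\tau_3=0.0836$, $\tau_4=0.2878$, $\tau_5=0.8160$ are exactly those for which this polynomial first becomes nonnegative. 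This quantitative positivity analysis---together with the fact that it fails for $k\ge 6$---is the delicate heart of the argument and is precisely the result of Nevanlinna and Odeh \cite{nevanlinna1981}; I would either cite it directly or reproduce their estimate for the trigonometric polynomial. Once that positivity is in hand, the first two steps assemble the identity in the stated form.
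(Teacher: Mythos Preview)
The paper does not actually prove this lemma; it merely states it and attributes the result to Nevanlinna and Odeh \cite{nevanlinna1981}, invoking Dahlquist's $G$-stability theory in the prefatory sentence. Your proposal goes considerably further than the paper itself by sketching the underlying argument---the generating-polynomial reformulation, Dahlquist's equivalence between the positive-real condition $\operatorname{Re}\frac{\rho_k(\zeta)}{1-\tau_k\zeta}\ge 0$ on the closed unit disk and the existence of the $G$-form, the Fej\'er--Riesz factorization yielding the $\delta_i$, and the Nevanlinna--Odeh determination of the threshold multipliers $\tau_k$. This is indeed the content of \cite{nevanlinna1981}, and your outline is accurate; in particular you correctly identify that the quantitative positivity of the trigonometric polynomial for $k=3,4,5$ is the nontrivial part and is exactly what Nevanlinna and Odeh establish. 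So your approach is consistent with---and more explicit than---what the paper does, which is simply to cite the result.
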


We also recall  the following lemma \cite{liu2010stable} which will be used to prove local error estimates in the three-dimensional case.

\begin{lemma}\label{locallemma}
Let $\phi:(0,\infty)\rightarrow (0,\infty)$ be continuous and increasing, and let $M>0$. Given $T_*$ such that $0< T_*< \int_M^{\infty}dz/\phi(z)$, there exists $C_*>0$ independent of $\delta t>0$ with the following property. Suppose that quantities $z_n,\,w_n \ge 0$ satisfy
\begin{equation*}
z_n+\sum_{k=0}^{n-1}\delta t w_k \le y_n:=M+\sum_{k=0}^{n-1} \delta t \phi(z_k), \,\forall n \le n_*.
\end{equation*}
with $n_* \delta t\le T_*$. Then $y_{n_*} \le C_*$.
\end{lemma}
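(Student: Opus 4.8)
The plan is to discard the information carried by $w_n$ and reduce the hypothesis to a closed scalar recursion for the majorant $y_n$, then compare this recursion with the ordinary differential equation $\dot y=\phi(y)$, whose maximal existence time is exactly $T^*:=\int_M^{\infty}dz/\phi(z)$. Dropping the nonnegative term $\sum_{k=0}^{n-1}\delta t\,w_k$ in the hypothesis gives $z_n\le y_n$. Since $\phi$ is increasing and $y_{n+1}-y_n=\delta t\,\phi(z_n)$ by definition of $y_n$, this yields
\begin{equation*}
y_{n+1}-y_n=\delta t\,\phi(z_n)\le \delta t\,\phi(y_n),\qquad y_0=M,
\end{equation*}
so that $(y_n)$ is nondecreasing and obeys a discrete analogue of $\dot y=\phi(y)$. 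The guiding heuristic is that as long as the discrete time $n\,\delta t$ stays strictly below the continuous blow-up time $T^*$, the sequence must remain bounded by a quantity independent of $\delta t$.

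The key device is the time map
\begin{equation*}
\Psi(y):=\int_M^{y}\frac{dz}{\phi(z)},
\end{equation*}
which, because $\phi>0$ is continuous, is a strictly increasing continuous bijection from $[M,\infty)$ onto $[0,T^*)$ with a well-defined increasing inverse $\Psi^{-1}$. I would prove by induction on $n$ that $\Psi(y_n)\le n\,\delta t$. The base case $\Psi(y_0)=\Psi(M)=0$ is immediate. For the inductive step, monotonicity of $\phi$ gives $\phi(z)\ge\phi(y_n)$ on $[y_n,y_{n+1}]$, hence
\begin{equation*}
\Psi(y_{n+1})-\Psi(y_n)=\int_{y_n}^{y_{n+1}}\frac{dz}{\phi(z)}\le\frac{y_{n+1}-y_n}{\phi(y_n)}\le\frac{\delta t\,\phi(y_n)}{\phi(y_n)}=\delta t,
\end{equation*}
where the last inequality uses the recursion above. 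Telescoping this bound yields $\Psi(y_n)\le n\,\delta t$ for all $n\le n_*$.

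To conclude, I evaluate at $n=n_*$: since $n_*\,\delta t\le T_*<T^*$ we obtain $\Psi(y_{n_*})\le T_*$, and applying the increasing map $\Psi^{-1}$ gives $y_{n_*}\le\Psi^{-1}(T_*)=:C_*$. Because $T_*$ lies strictly inside the range $[0,T^*)$ of $\Psi$, the value $C_*=\Psi^{-1}(T_*)$ is finite, and it depends only on $M$, $\phi$ and $T_*$, not on $\delta t$ or $n_*$, which is exactly the assertion.

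The main obstacle is conceptual rather than computational: recognizing that the correct quantity to track is the ``elapsed time'' $\Psi(y_n)$ rather than $y_n$ itself, since $\Psi$ linearizes the nonlinear recursion into the clean estimate $\Psi(y_{n+1})\le\Psi(y_n)+\delta t$. Two small points require care. First, the inequality $\int_{y_n}^{y_{n+1}}dz/\phi(z)\le (y_{n+1}-y_n)/\phi(y_n)$ must invoke the lower bound $\phi(z)\ge\phi(y_n)$ (equivalently the upper bound on $1/\phi$) on the interval of integration. Second, one should check that the induction never leaves the domain $[0,T^*)$ of $\Psi^{-1}$ so that $C_*$ is genuinely finite; this is automatic because $\Psi(y_n)\le n\,\delta t\le T_*<T^*$ holds throughout. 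If $\phi$ is only prescribed on $(0,\infty)$, one extends it continuously and monotonically to $z=0$, which affects none of the inequalities since $y_n\ge M>0$.
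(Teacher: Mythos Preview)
Your argument is correct. The paper does not actually supply a proof of this lemma; it merely recalls the result from \cite{liu2010stable} and uses it as a black box in the three-dimensional error analysis. So there is no in-paper proof to compare against.

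That said, the route you take---dropping the $w_k$-sum to get $z_n\le y_n$, deducing the closed recursion $y_{n+1}-y_n\le\delta t\,\phi(y_n)$, and then linearizing via the time map $\Psi(y)=\int_M^y dz/\phi(z)$ to obtain $\Psi(y_{n+1})\le\Psi(y_n)+\delta t$---is precisely the standard discrete Bihari/Osgood comparison argument that underlies this lemma in the literature, and your handling of the two delicate points (the direction of the monotonicity estimate on $1/\phi$, and the fact that $\Psi(y_n)\le n\,\delta t\le T_*$ stays strictly inside the range $[0,T^*)$ so $\Psi^{-1}(T_*)$ is finite) is clean. Nothing further is needed.
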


\subsection{Error analysis  for the velocity  in 2D}

\begin{theorem}\label{ThmNS}
Let $d=2$, $T>0$, $\bm u_0\in \bm V\cap \bm H_p^m$ with $m\ge 3$ and $\bm u$ be the solution of \eqref{eq:NS}.
We  assume that $\bar {\bm u}_N^i$ and $\bm u_N^i$ $(i=1,\cdots,k-1)$ are computed with a proper initialization procedure such that
\begin{equation}\label{inik}
\begin{split}
&\|\bar {\bm u}_N^i-\bm u(\cdot,t_i)\|_1,\;\| \bm u_N^i-\bm u(t_i)\|_1=O(\delta t^k+N^{1-m}),\\
&\|\bar {\bm u}_N^i-\bm u(\cdot,t_i)\|_2,\;\| \bm u_N^i-\bm u(t_i)\|_2=O(\delta t^k+N^{2-m}),
\end{split} \quad i=1,2,3,4,5.
\end{equation}
Let $\bar {\bm u}_N^{n+1}$ and $\bm u_N^{n+1}$ be computed with the $k$th-order scheme  \eqref{eq: NSsav} $(1 \le k \le5)$, and
\begin{equation*}
 \eta_1^{n+1}=1-(1-\xi^{n+1})^{2}, \quad \eta_k^{n+1}=1-(1-\xi^{n+1})^{k}\; (k=2,3,4,5).
\end{equation*}
Then for  $n+1 \le T/\delta t$ with $\delta t \le \frac1{1+2^{k+2} C_0^{k+1}}$ and $N \ge {2^{k+2}C_{\Pi}^{k+1}+1} $, we have
\begin{equation*}
\|\bar{\bm u}_N^{n}-\bm u(\cdot,t^n)\|_1^2,\,\| {\bm u}_N^{n}-\bm u(\cdot,t^n)\|_1^2 \le C \delta t^{2k}+C N^{2(1-m)},
\end{equation*}
and
\begin{equation*}
\delta t\sum_{q=0}^n \|\bar {\bm u}_N^{q+1}-\bm u(\cdot,t^{q+1})\|_2^2,\,\delta t\sum_{q=0}^n \|{\bm u}_N^{q+1}-\bm u(\cdot,t^{q+1})\|_2^2 \le C \delta t^{2k}+C N^{2(2-m)}.
\end{equation*}
where the constants $C_0$, $C_{\Pi}$ and $C$ are dependent on $T,\, \Omega,$ the $k\times k$ matrix $G=(g_{ij})$ in Lemma \ref{lemmaODEH} and the exact solution $\bm u$, but are independent of $\delta t$ and $N$.


\end{theorem}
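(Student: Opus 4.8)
The plan is to derive an error equation for $\bar{\bm e}_N^{n+1}$ by subtracting the consistency version of \eqref{eq: NSsavE1} from the scheme, test it against $\bar{\bm e}_N^{n+1}-\tau_k\bar{\bm e}_N^n$ (with $\tau_k$ as in Lemma \ref{lemmaODEH}), and run an induction on $n$ in which the induction hypothesis is precisely the $l^\infty(H^1)$ bound we are proving (plus a suitably scaled smallness needed to control $\eta_k^{n+1}-1$). First I would write $\bm u(\cdot,t^{n+1})$ as an exact solution of \eqref{eq:NS1b}, apply $\Pi_N$, and obtain truncation terms: the BDF-$k$ local truncation error $\bm R^{n+1}_{\mathrm{BDF}}=O(\delta t^k)$ in a suitable norm, the Adams–Bashforth extrapolation error in the nonlinear term $O(\delta t^k)$, and the projection error $\bm e_\Pi$ controlled by Lemma \ref{Fourier}. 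The central identity is Lemma \ref{lemmaODEH}, which converts $(\alpha_k\bar{\bm e}_N^{n+1}-A_k(\bar{\bm e}_N^n),\bar{\bm e}_N^{n+1}-\tau_k\bar{\bm e}_N^n)$ into a telescoping $G$-norm difference plus a nonnegative square; because we want estimates in $H^1$ and $H^2$, I would apply this identity after testing with $-\Delta(\bar{\bm e}_N^{n+1}-\tau_k\bar{\bm e}_N^n)$ rather than with $\bar{\bm e}_N^{n+1}-\tau_k\bar{\bm e}_N^n$ itself, so the dissipation term becomes $\nu\|\Delta\bar{\bm e}_N^{n+1}\|^2$ (up to the $\tau_k$-perturbation absorbed by the square term), matching the stronger 2D dissipation law \eqref{eq:diss2d}.

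The key steps, in order: (i) set up the error equation and bound each truncation term; (ii) handle the difference $\bm u_N^{n+1}-\bar{\bm u}_N^{n+1}=(\eta_k^{n+1}-1)\bar{\bm u}_N^{n+1}$, showing $|\eta_k^{n+1}-1|=|1-\xi^{n+1}|^k=|(E(\bar{\bm u}_N^{n+1})+1-r^{n+1})/(E(\bar{\bm u}_N^{n+1})+1)|^k$, which via $s^{n+1}=r^{n+1}-r(t^{n+1})$ and $r(t^{n+1})=E(\bm u(t^{n+1}))+1$ is $O((\delta t^k+|s^{n+1}|+\|\bar{\bm e}^{n+1}\|_1^2)^k)$ — hence at least $k$th order and controllable under the induction hypothesis; (iii) test the error equation with $-\Delta(\bar{\bm e}_N^{n+1}-\tau_k\bar{\bm e}_N^n)$, apply Lemma \ref{lemmaODEH}, and estimate the nonlinear term $b_{\textbf{A}}(B_k(\bm u_N^n),B_k(\bm u_N^n),\cdot)-b_{\textbf{A}}(\bm u,\bm u,\cdot)$ by splitting into the extrapolation-consistency part and the genuine error part $B_k(\bm e_N^n)$, then bounding the latter with \eqref{eq:ineq2d} and \eqref{eq:ineq2} using the uniform $H^1$ bound $M_k$ from Theorem \ref{stableThm} together with the induction hypothesis $\|\bar{\bm e}_N^q\|_1\le\varepsilon$, absorbing $\nu\|\Delta\bar{\bm e}_N^{n+1}\|^2/2$ on the left; (iv) derive the $s^{n+1}$ estimate: subtract \eqref{eq:NS3} from \eqref{eq: NSsavE2}, showing $|s^{n+1}|^2\le|s^n|^2+C\delta t(\cdots)$ with right-hand side involving $\delta t^{2k}$, $\|\Delta\bar{\bm e}_N^{n+1}\|^2$ and already-controlled quantities; (v) sum over $n$, invoke discrete Gronwall (Lemma \ref{Gron1}) to get $\|\bar{\bm e}_N^{n}\|_1^2+\delta t\sum\|\Delta\bar{\bm e}_N^{q+1}\|^2\le C(\delta t^{2k}+N^{2(1-m)})$, close the induction (this is where the smallness $\delta t\le(1+2^{k+2}C_0^{k+1})^{-1}$ and $N\ge 2^{k+2}C_\Pi^{k+1}+1$ is used, to guarantee the a priori bound $\varepsilon$ is not exceeded), and finally convert from $\bar{\bm e}_N$ to $\bm e_N$ and from $\bar{\bm e}_N$ to $\bar{\bm e}=\bar{\bm e}_N+\bm e_\Pi$ using step (ii) and Lemma \ref{Fourier}.

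The main obstacle I anticipate is the nonlinear estimate in step (iii) combined with closing the bootstrap: the nonlinear term carries a full two derivatives (through $b_{\textbf{A}}$ with the test function $-\Delta(\bar{\bm e}_N^{n+1}-\tau_k\bar{\bm e}_N^n)$, which sits at the $H^2$ level), so one must distribute regularity carefully among the factors — using the Ladyzhenskaya-type interpolation \eqref{eq:ineq2d} to trade an $\|\cdot\|_2$ for $\|\cdot\|_1^{1/2}\|\cdot\|_2^{1/2}$ and then Young's inequality with a small parameter to keep the $\|\Delta\bar{\bm e}_N^{n+1}\|^2$ coefficient below $\nu$ — while ensuring every remaining factor is either $O(\delta t^k+N^{\cdots})$ or bounded by the induction hypothesis times something summable. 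The $\tau_k\ne0$ cross terms for $k=3,4,5$ are the delicate part here, since $\bar{\bm e}_N^n$ appears at the $H^2$ level in the nonlinear term without its own dissipation; the resolution is that Lemma \ref{lemmaODEH}'s square term $\|\sum\delta_i\bar{\bm e}_N^{n+1+i-k}\|^2$, promoted to $\|\Delta(\cdots)\|^2$, can be used to absorb these, but one must check the algebra that the Nevanlinna–Odeh multipliers still give a genuine positive contribution after applying $-\Delta$. A secondary technical point is the coupling between the $\bar{\bm e}_N$ estimate and the $s^{n+1}$ estimate: these must be combined into a single Gronwall argument (or done in the right order) because $|s^{n+1}|$ enters $|\eta_k^{n+1}-1|$ which enters the nonlinear term, and $\|\Delta\bar{\bm e}_N^{n+1}\|$ enters the bound for $|s^{n+1}|$.
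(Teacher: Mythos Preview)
Your overall architecture---error equation, test with $-\Delta(\bar{\bm e}_N^{n+1}-\tau_k\bar{\bm e}_N^n)$, Nevanlinna--Odeh (Lemma~\ref{lemmaODEH}), nonlinear splitting with the 2D estimate \eqref{eq:ineq2d}, a separate bound on $s^{n+1}$, then discrete Gronwall---matches the paper. But there is one genuine gap and two places where the paper's route is materially simpler.

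\medskip
\textbf{The gap: you never secure the $l^2(0,T;H^2)$ bound on the numerical solution.} In the Gronwall step the nonlinear error--error term produces, after Young, a coefficient $\|B_k(\bm e^q)\|_2^2$ multiplying $\|\bar{\bm e}_N^q\|_1^2$ (cf.\ the paper's \eqref{eq:non3} and \eqref{eq:error3}). For Gronwall to close you need $\delta t\sum_q\|B_k(\bm e^q)\|_2^2\le C$, which in turn requires $\delta t\sum_q\|\Delta\bar{\bm u}_N^q\|^2\le C$. This does \emph{not} follow from the $l^\infty(H^1)$ stability $\|\nabla\bm u_N^q\|\le M_k$ of Theorem~\ref{stableThm} that you invoke. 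In the paper it comes from the SAV dissipation identity \eqref{eq: energystable}: summing $r^{q+1}-r^q=-\delta t\,\nu\,\xi^{q+1}\|\Delta\bar{\bm u}_N^{q+1}\|^2$ gives $\nu\,\delta t\sum\xi^q\|\Delta\bar{\bm u}_N^q\|^2\le r^0$, and then one needs $\xi^q\ge\tfrac12$. This is precisely why the paper inducts on $|1-\xi^q|\le C_0\delta t+C_\Pi N^{2-m}$ rather than on the error itself: the induction hypothesis on $\xi^q$ immediately yields $\xi^q\ge\tfrac12$ (hence the $l^2(H^2)$ bound \eqref{eq:H1sumbound}--\eqref{eq:H2sumbound}) \emph{and}, via Theorem~\ref{stableThm}, the $l^\infty(H^1)$ bound \eqref{eq:uL2bound}. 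Your induction hypothesis on the error alone does not give access to this mechanism; you would have to add $|1-\xi^q|\le C_0\delta t+C_\Pi N^{2-m}$ to your hypothesis and then reorganize the argument around it, at which point you have essentially the paper's proof.

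\medskip
\textbf{Two simplifications relative to your plan.} First, the $\tau_k$ cross term in the dissipation is handled by Cauchy--Schwarz and $\tau_k<1$: from $\nu(\Delta\bar{\bm e}_N^{q+1},\tau_k\Delta\bar{\bm e}_N^{q})\le\frac{\nu}{2}\|\Delta\bar{\bm e}_N^{q+1}\|^2+\frac{\nu\tau_k^2}{2}\|\Delta\bar{\bm e}_N^q\|^2$, after summing in $q$ a net positive multiple of $\sum\|\Delta\bar{\bm e}_N^q\|^2$ survives; the Nevanlinna--Odeh square term is simply dropped, so there is no delicate algebra with the $\delta_i$'s to check. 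Second, the $s$--$\bar{\bm e}$ coupling is resolved sequentially, not by a joint Gronwall: within the induction step one first proves the full error bound \eqref{eq:errorbound} for $\bar{\bm e}_N^{n+1}$ (using only the hypothesis for $q\le n$), then uses that bound inside the $s$-equation \eqref{eq:serrorsum} to get $|s^{n+1}|=O(\delta t)$ via a separate Gronwall, and finally combines $|1-\xi^{n+1}|\le C(\|\nabla\bar{\bm e}^{n+1}\|+|s^{n+1}|)$ to close the induction. Note that $|s^{n+1}|$ is only first order (the SAV update \eqref{eq: NSsavE2} is a first-order scheme for $r$); the $k$th-order accuracy of $\eta_k^{n+1}-1$ comes entirely from raising $|1-\xi^{n+1}|$ to the $k$th power.
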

\begin{proof}  
It is shown in \cite{Tema83} that in the periodic case, $\bm u_0\in \bm H_p^m$ implies that $\bm u(\cdot,t)\in \bm H_p^m$ for all $t\le T$, and furthermore, it is shown in \cite{foias1989gevrey} that  $\bm u$ has Gevrey class regularity. In particular, we have
\begin{equation}\label{regNS}
\bm u\, \in C([0,T];\bm H_p^m),\, m \ge 3,\, \frac{\partial ^j \bm u}{\partial t^j} \in L^2(0,T;\bm H_p^2) \,\, 1\le j \le k,\;
\frac{\partial ^{k+1} \bm u}{\partial t^{k+1}} \in L^2(0,T;L_0^2).
\end{equation}
To simplify the presentation,  we assume
 $\bar{\bm u}_N^i=\bm u_N ^i=\Pi_N \bm u(t_i)$ and $r^i=E_1[\bm u_N^i]$ for $i=1,\cdots,k-1$ so that \eqref{inik} is obviously satisfied.

 The main task  is to prove by induction,
\begin{equation}\label{eq: prestepsk}
|1-\xi^q| \le C_0\,\delta t+C_{\Pi} N^{2-m},\,\, \forall  q\le T/{\delta t},
\end{equation}
where  the constant $C_0$ and $C_{\Pi} $   will be defined in the induction process below.

Under the assumption, \eqref{eq: prestepsk}  certainly holds for $q=0$. Now
suppose we have
\begin{equation}\label{eq: prestepsk2}
|1-\xi^q| \le C_0\,\delta t+C_{\Pi} N^{2-m},\,\, \forall q \le n,
\end{equation}
we shall prove below
\begin{equation}\label{eq: xik}
|1-\xi^{n+1}| \le C_0 \delta t+C_{\Pi} N^{2-m}.
\end{equation}
We shall first consider $k=2,3,4,5$, and point out the necessary modifications for the case $k=1$ later.

\textbf{Step 1: Bounds for $\nabla \bar{\bm u}_N^q$, $\Delta \bar{\bm u}_N^q$ and  $\Delta {\bm u}_N^q$,  $\forall q\le n$.} We first recall the inequality
\begin{equation}\label{eq:inequk}
(a+b)^k \le 2^k (a^k+b^k),\quad \forall a,b>0,\,k\ge 1.
\end{equation}
Under the assumption \eqref{eq: prestepsk2}, if we choose $\delta t$ small enough and $N$ large enough such that
\begin{equation}\label{eq: dtcond1}
 \delta t \le \text{min}\{\frac{1}{2^{k+2} C_0^{k}},1\} ,\quad N \ge  \text{max}\{2^{k+2} C_{\Pi}^k,1\},
\end{equation}
 we have
\begin{equation}\label{eq: erroretaHigh}
1-(\frac{1}{2^{k+2}C_0^{k-1}}+\frac{N^{3-m}}{2^{k+2}C_{\Pi}^{k-1}})\le |\xi^q|\le 1+(\frac{1}{2^{k+2}C_0^{k-1}}+\frac{N^{3-m}}{2^{k+2}C_{\Pi}^{k-1}}),\,\, \forall q\le n,
\end{equation}
and
\begin{equation*}
(1-\xi^q)^k \le \frac{\delta t^{k-1}}{4}+\frac{N^{k(2-m)+1}}{4},\,\,\forall q\le n,
\end{equation*}
and
\begin{equation*}
\frac{1}{2}<1-(\frac{\delta t^{k-1}}{4}+\frac{N^{k(2-m)+1}}{4})\le |\eta_k^q| \le 1+\frac{\delta t^{k-1}}{4}+\frac{N^{k(2-m)+1}}{4}<2,\,\, \forall q\le n.
\end{equation*}
Then it follows from the above and \eqref{eq: L2bound} that
\begin{equation}\label{eq:uL2bound}
\|\bar{\bm u}_N^{q}\|_1 \le 2M_k,\,\forall q\le n.
\end{equation}
Moveover, \eqref{eq: energystable} and $m\ge 3$ imply that
\begin{equation}\label{eq:H1sumbound}
\nu \delta t \sum_{q=1}^{n}\|\Delta \bar{\bm u}_N^q\|^2 \le  \frac{2r^0}{|\xi^q|} \le 4r^0,\, \,C_0 \ge 1,\,\, C_{\Pi} \ge 1.
\end{equation}
and
\begin{equation}\label{eq:H2sumbound}
\nu \delta t \sum_{q=1}^{n}\|\Delta {\bm u}_N^q\|^2  \le 16r^0,\, \,C_0 \ge 1,\,\, C_{\Pi} \ge 1.
\end{equation}
\textbf{Step 2: Estimates for $\nabla \bar{\bm e}_N^{n+1}$ and $\Delta \bar{\bm e}_N^{n+1}$.} By the assumptions on the exact solution $\bm u$ and \eqref{eq:uL2bound}, we can choose $C$ large enough such that
\begin{equation}\label{eq:H1boundex}
\|\bm u(t)\|_{H^2}^2 \le C,\,\forall t\le T,\,\,\|\bar {\bm u}_N^{q}\|_1 \le C,\,\,\forall q\le n.
\end{equation}
From \eqref{eq: NSsavE1}, we can write down the error equation as
\begin{equation}\label{eq:error}
\big(\alpha_k\bar{\bm e}^{q+1}-A_k(\bar{\bm e}^q), v_N\big)+\delta t \nu \big(\nabla \bar{\bm e}^{q+1}, \nabla v_N \big)= \big(R^q_k,v_N \big) +\delta t \big(Q^q_k, v_N\big),\quad \forall v_N \in S_N,
\end{equation}
where  $Q^q_k$  and $R^q_k$ are given by
\begin{equation}\label{eq: R2high}
Q^q_k=-\textbf{A}\big((B_k{\bm u}^q) \cdot \nabla) B_k({\bm u}^q)\big)+\textbf{A}\big({\bm u}(t^{q+1}) \cdot \nabla {\bm u}(t^{q+1})\big),
\end{equation}
and
\begin{equation}\label{eq: R1high}
\begin{split}
R^q_k & =-\alpha_k \bm u(t^{q+1})+A_k(\bm u(t^q))+\delta t \bm u_t(t^{q+1}) \\
& =\sum_{i=1}^{k} a_i \int_{t^{q+1-i}}^{t^{q+1}}(t^{q+1-i}-s)^k\frac{\partial^{k+1} \bm u}{\partial t^{k+1}}(s)ds,
\end{split}
\end{equation}
with $a_i$ being some fixed and bounded constants determined by the truncation errors,
 for example, in the case $k=3$, we have
\begin{equation*}
R_3^q=-3\int_{t^q}^{t^{q+1}}(t^q-s)^3 \frac{\partial^{4} \bm u}{\partial t^{4}}(s)ds+\frac{3}{2}\int_{t^{q-1}}^{t^{n+1}}(t^{q-1}-s)^3 \frac{\partial^{4} \bm u}{\partial t^{4}}(s)ds-\frac{1}{3}\int_{t^{q-2}}^{t^{n+1}}(t^{q-2}-s)^3 \frac{\partial^{4} \bm u}{\partial t^{4}}(s)ds.
\end{equation*}

Let $v_N=-\Delta \bar{\bm e}_N^{q+1}+\tau_k \Delta \bar{\bm e}_N^{q}$ in \eqref{eq:error}, it follows from Lemma \ref{lemmaODEH} and \eqref{eq:proerror} that
\begin{equation}\label{eq: errorhigh}
\begin{split}
\sum_{i,j=1}^{k}g_{ij}&( \nabla \bar{\bm e}_N^{q+1+i-k}, \nabla \bar{\bm e}_N^{q+1+j-k}) -\sum_{i,j=1}^{k}g_{ij}( \nabla \bar{\bm e}_N^{q+i-k}, \nabla \bar{\bm e}_N^{q+j-k})\\
&+\|\sum_{i=0}^{k} \delta_i \nabla \bar{\bm e}_N^{q+1+i-k}\|^2
+\delta t \nu \|\Delta \bar{\bm e}_N^{q+1}\|^2\\
& = \delta t \nu (\Delta \bar{\bm e}_N^{q+1}, \tau_k \Delta \bar{\bm e}_N^{q})+ (R^q_k, -\Delta \bar{\bm e}_N^{q+1}+\tau_k \Delta \bar{\bm e}_N^{q})+\delta t (Q^n_k, -\Delta \bar{\bm e}_N^{q+1}+\tau_k \Delta \bar{\bm e}_N^{q}).
\end{split}
\end{equation}
Next, we bound the righthand side of \eqref{eq: errorhigh}.
It follows from \eqref{eq: R1high} that
\begin{equation}\label{eq: absR1high}
\|R^q_k\|^2 \le C \delta t^{2k+1} \int_{t^{q+1-k}}^{t^{q+1}} \|\frac{\partial^{k+1} \bm u}{\partial t^{k+1}}(s)\|^2 ds.
\end{equation}
Therefore,
\begin{equation}\label{eq: errorR1high}
\begin{split}
\Big |\big(R^q_k, &-\Delta \bar{\bm e}_N^{q+1}+\tau_k \Delta \bar{\bm e}_N^{q}\big)\Big | \le \frac{C(\epsilon)}{\delta t}\| R^q_k\|^2+ \delta t \epsilon \|-\Delta \bar{\bm e}_N^{q+1}+\tau_k \Delta \bar{\bm e}_N^{q}\|^2 , \\
& \le \frac{C(\epsilon)}{\delta t}\|R^q_k\|^2 +2\delta t\epsilon\|\Delta \bar{\bm e}_N^{q+1}\|^2+2\delta t\epsilon\|\Delta \bar{\bm e}_N^{q}\|^2 , \\
& \le 2\delta t\epsilon\|\Delta \bar{\bm e}_N^{q+1}\|^2+2\delta t\epsilon\|\Delta \bar{\bm e}_N^{q}\|^2 +C(\epsilon) \delta t^{2k} \int_{t^{q+1-k}}^{t^{q+1}} \|\frac{\partial^{k+1}\bm u}{\partial t^{k+1}}(s)\|^2 ds.
\end{split}
\end{equation}
For the term with $Q_k^q$, we split it as
\begin{equation}\label{eq:R2split}
\begin{split}
(Q_k^n, -\Delta \bar{\bm e}_N^{q+1}+\tau_k \Delta {\bm e}_N^{q})&=\Big(\textbf{A}\big([\bm u(t^{q+1})-B_k({\bm u}^q)] \cdot \nabla \bm u(t^{q+1})\big), -\Delta \bar{\bm e}_N^{q+1}+\tau_k \Delta {\bm e}_N^{q} \Big)\\
& +\Big(\textbf{A}\big(B_k({\bm u}^q) \cdot \nabla[\bm u(t^{q+1})-B_k(\bm u(t^q))]\big),  -\Delta \bar{\bm e}_N^{q+1}+\tau_k \Delta \bar{\bm e}_N^{q} \Big)\\
& -\Big(\textbf{A}\big( B_k({\bm e}^q) \cdot \nabla B_k({\bm e}^q) \big),  -\Delta \bar{\bm e}_N^{q+1}+\tau_k \Delta \bar{\bm e}_N^{q} \Big) \\
& - \Big(\textbf{A}\big(B_k(\bm u(t^q)) \cdot \nabla B_k({\bm e}^q) \big),  -\Delta \bar{\bm e}_N^{q+1}+\tau_k \Delta \bar{\bm e}_N^{q} \Big).
\end{split}
\end{equation}
We bound the terms on the right hand side of \eqref{eq:R2split} with the help of \eqref{eq:ineq2d}, \eqref{eq:ineq2} and \eqref{eq:H1boundex}:
\begin{equation}\label{eq:non1}
\begin{split}
&\Big(\textbf{A}\big([\bm u(t^{q+1}) -B_k({\bm u}^q)] \cdot \nabla \bm u(t^{q+1})\big), -\Delta \bar{\bm e}_N^{q+1}+\tau_k \Delta \bar{\bm e}_N^{q} \Big)  \\
& \le C\|\bm u(t^{q+1})-B_k({\bm u}^q)\|_1 \|\bm u(t^{q+1})\|_2 \|-\Delta \bar{\bm e}_N^{q+1}+\tau_k \Delta \bar{\bm e}_N^{q}\|\\
& \le C(\epsilon)\|\bm u(t^{q+1})-B_k({\bm u}^q)\|_1^2 \|\bm u(t^{q+1})\|_2^2+ \epsilon \|-\Delta \bar{\bm e}_N^{q+1}+\tau_k \Delta \bar{\bm e}_N^{q}\|^2\\
& \le C(\epsilon)\|\bm u(t^{q+1})-B_k(\bm u(t^{q}))\|_1^2\|\bm u(t^{q+1})\|_2^2+C(\epsilon)\|B_k({\bm e}^q)\|_1^2\|\bm u(t^{q+1})\|_2^2+\epsilon\|-\Delta \bar{\bm e}_N^{q+1}+\tau_k \Delta \bar{\bm e}_N^{q}\|^2\\
& \le C(\epsilon)\|\sum_{i=1}^{k}b_i \int_{t^{q+1-i}}^{t^{q+1}}(t^{q+1-i}-s)^{k-1}\frac{\partial^{k} \bm u}{\partial t^{k}}(s)ds\|_1^2 + C(\epsilon)\|B_k({\bm e}^q)\|_1^2+2\epsilon\|\Delta \bar{\bm e}_N^{q+1}\|^2+2\epsilon\|\Delta \bar{\bm e}_N^{q}\|^2\\
& \le C(\epsilon)\delta t^{2k-1} \int_{t^{q+1-k}}^{t^{q+1}}\|\frac{\partial^{k} \bm u}{\partial t^{k}}(s)\|_1^2ds + C(\epsilon)\|B_k({\bm e}^q)\|_1^2+2\epsilon\|\Delta \bar{\bm e}_N^{q+1}\|^2+2\epsilon\|\Delta \bar{\bm e}_N^{q}\|^2,
\end{split}
\end{equation}
where $b_i$ are some fixed and bounded constants determined by the truncation error. For example, in the case $k=3$, we have
\begin{equation*}
\begin{split}
B_3(\bm u(t^q))-\bm u(t^{q+1})& =-\frac{3}{2}\int_{t^{q}}^{t^{q+1}}(t^{q}-s)^2\frac{\partial^{3} \bm u}{\partial t^{3}}(s)ds+\frac{3}{2}\int_{t^{q-1}}^{t^{q+1}}(t^{q-1}-s)^2\frac{\partial^{3} \bm u}{\partial t^{3}}ds \\
& -\frac{1}{2}\int_{t^{q-2}}^{t^{q+1}}(t^{q-2}-s)^2\frac{\partial^{3} \bm u}{\partial t^{3}}ds.
\end{split}
\end{equation*}
For the other terms in the righthand side of \eqref{eq:R2split}, we have
\begin{equation}\label{eq:non2}
\begin{split}
\Big |\Big(\textbf{A}\big(B_k({\bm u}^q) & \cdot \nabla[\bm u(t^{q+1})-B_k(\bm u(t^q))]\big), -\Delta \bar{\bm e}_N^{q+1}+\tau_k \Delta \bar{\bm e}_N^{q}\Big) \Big | \\
& \le C\|B_k({\bm u}^q) \|_1 \|\bm u(t^{q+1})-B_k(\bm u(t^q))\|_2 \|-\Delta \bar{\bm e}_N^{q+1}+\tau_k \Delta \bar{\bm e}_N^{q}\|\\
& \le C(\epsilon)\|B_k({\bm u}^q)\|_1^2 \|\bm u(t^{q+1})-B_k(\bm u(t^q))\|_2^2+ \epsilon\|-\Delta \bar{\bm e}_N^{q+1}+\tau_k \Delta \bar{\bm e}_N^{q}\|^2\\
& \le  C(\epsilon)\delta t^{2k-1} \int_{t^{q+1-k}}^{t^{q+1}}\|\frac{\partial^{k} \bm u}{\partial t^{k}}(s)\|_2^2ds+2\epsilon\|\Delta \bar{\bm e}_N^{q+1}\|^2+2\epsilon\|\Delta \bar{\bm e}_N^{q}\|^2;
\end{split}
\end{equation}
Since $d=2$, we can use \eqref{eq:ineq2d} to obtain
\begin{equation}\label{eq:non3}
\begin{split}
\Big |\Big(& \textbf{A}\big( B_k({\bm e}^q) \cdot \nabla B_k({\bm e}^q) \big),  -\Delta \bar{\bm e}_N^{q+1}+\tau_k \Delta \bar{\bm e}_N^{q} \Big) \Big | \\
& \le C \|B_k(\bar{\bm e}^q)\|_1^{1/2}\|B_k(\bar{\bm e}^q)\|^{1/2}\|B_k(\bar{\bm e}^q)\|_2^{1/2}\|B_k(\bar{\bm e}^q)\|_1^{1/2}\| -\Delta \bar{\bm e}_N^{q+1}+\tau_k \Delta \bar{\bm e}_N^{q}\|\\
& \le C\|B_k({\bm e}^q)\|_1\|B_k({\bm e}^q)\|_2\| -\Delta \bar{\bm e}_N^{q+1}+\tau_k \Delta \bar{\bm e}_N^{q}\|\quad \text{(true in 2d and 3d)}\\
& \le C(\epsilon)\|B_k({\bm e}^q)\|_1^2\|B_k({\bm e}^q)\|_2^2+\epsilon\| -\Delta \bar{\bm e}_N^{q+1}+\tau_k \Delta \bar{\bm e}_N^{q}\|^2\\
& \le C(\epsilon)\|B_k({\bm e}^q)\|_1^2\|B_k({\bm e}^q)\|_2^2+2\epsilon\|\Delta \bar{\bm e}_N^{q+1}\|^2+2\epsilon\|\Delta \bar{\bm e}_N^{q}\|^2;
\end{split}
\end{equation}
Thanks to \eqref{eq:ineq2}, we have
\begin{equation}\label{eq:non4}
\begin{split}
\Big |\Big( &\textbf{A}\big(B_k(\bm u(t^q)) \cdot \nabla B_k(\bar{\bm e}^q) \big),  -\Delta \bar{\bm e}_N^{q+1}+\tau_k \Delta \bar{\bm e}_N^{q} \Big) \Big | \\
& \le C\|B_k(\bm u(t^q))\|_2 \|B_k({\bm e}^q)\|_1 \|-\Delta \bar{\bm e}_N^{q+1}+\tau_k \Delta \bar{\bm e}_N^{q}\|\\
& \le C(\epsilon)\|B_k(\bm u(t^q))\|_2^2 \|B_k({\bm e}^q)\|_1^2+ \epsilon\|-\Delta \bar{\bm e}_N^{q+1}+\tau_k \Delta \bar{\bm e}_N^{q}\|^2\\
& \le C(\epsilon)\|B_k({\bm e}^q)\|_1^2+2\epsilon\|\Delta \bar{\bm e}_N^{q+1}\|^2+2\epsilon\|\Delta \bar{\bm e}_N^{q}\|^2.
\end{split}
\end{equation}
On the other hand, we derive from
\eqref{eq:inequk} and \eqref{eq: prestepsk2} that
\begin{equation*}
\quad |\eta_k^q-1| \le 2^{k} C_0^{k}\, \delta t^{k}+2^{k}C_{\Pi}^{k}N^{k(2-m)},\quad \forall q\le n.
\end{equation*}
Note that $\bm u_N^q=\eta_k^q\bar{\bm u}_N^q$, we can estimate $\|B_k(\bm e^q)\|_1^2$ by
\begin{equation}\label{eq:Bkerror}
\begin{split}
\|B_k(\bm e^q)\|_1^2&=\|B_k(\bm u_N^q-\bar{\bm u}_N^q)+B_k(\bar{\bm e}_N^q)+B_k(\bm e_{\Pi}^q)\|_1^2\\
& \le CC_0^{2k} \delta t^{2k}+CC_{\Pi}^{2k}N^{2k(2-m)}+C\|B_k(\bar{\bm e}_N^q)\|_1^2+C\|\bm u(t^q)\|_m^2 N^{2-2m}.
\end{split}
\end{equation}
Combining \eqref{eq: errorhigh}-\eqref{eq:Bkerror} and dropping some unnecessary terms, we arrive at
\begin{equation}\label{eq:tobesum}
\begin{split}
\sum_{i,j=1}^{k}g_{ij}&(\nabla \bar{\bm e}_N^{q+1+i-k}, \nabla \bar{\bm e}_N^{q+1+j-k}) -\sum_{i,j=1}^{k}g_{ij}( \nabla \bar{\bm e}_N^{q+i-k}, \nabla \bar{\bm e}_N^{q+j-k})+\delta t(\frac{\nu}{2}-10\epsilon)\|\Delta \bar{\bm e}_N^{q+1}\|^2\\
& \le \delta t (\frac{\nu \tau_k^2 }{2}+10\epsilon) \|\Delta \bar{\bm e}_N^{q}\|^2+ C(\epsilon)\delta t\|B_k(\bar{\bm e}_N^q)\|_1^2+C(\epsilon)\delta t\|B_k(\bar{\bm e}_N^q)\|_1^2\|B_k({\bm e}_N^q)\|_2^2\\
& +C(\epsilon) \delta t^{2k} \int_{t^{q+1-k}}^{t^{q+1}}(\|\frac{\partial^{k} \bm u}{\partial t^{k}}(s)\|_2^2+\|\frac{\partial^{k+1}\bm u}{\partial t^{k+1}}(s)\|^2)ds\\
&+ C(\epsilon) C_0^{2k} \delta t^{2k+1}(1+\|B_k({\bm e}^q)\|_2^2)+\delta t C(\epsilon) C_{\Pi}^{2k}N^{2k(2-m)} (1+\|B_k({\bm e}^q)\|_2^2)\\
&+ \delta t C(\epsilon)\|\bm u(t^q)\|_m^2N^{2-2m}(1+\|B_k(\bm e^q)\|_2^2).
\end{split}
\end{equation}
Since $\tau_k<1$, we can choose $\epsilon$ small enough such that
\begin{equation}\label{eq:epsilon}
\frac{\nu}{2}-10\epsilon>\frac{\nu \tau_k^2 }{2}+10\epsilon+\frac{\nu(1-\tau_k^2)}{4},
\end{equation}
and then taking the sum of \eqref{eq:tobesum} on $q$ from $k-1$ to $n$, noting that $G=(g_{ij})$ is a  symmetric  positive definite matrix with minimum eigenvalue $\lambda_G$, we obtain:
\begin{equation}\label{eq:error3}
\begin{split}
 \lambda_G \|\nabla \bar{\bm e}_N^{n+1}\|^2&  + \frac{\delta t\nu(1-\tau_k^2)}{4} \sum_{q=0}^{n+1}\|\Delta \bar{\bm e}_N^{q}\|^2 \\
 & \le  \sum_{i,j=1}^{k}g_{ij} (\nabla \bar{\bm e}_N^{n+1+i-k}, \nabla \bar{\bm e}_N^{n+1+j-k}) + \frac{\delta t\nu(1-\tau_k^2)}{4} \sum_{q=0}^{n+1}\|\Delta \bar{\bm e}_N^{q}\|^2\\
& \le C \delta t \sum_{q=0}^{n} \|\bar{\bm e}_N^q\|_1^2(\|B_k({\bm e}^q)\|_2^2+1)\\
& +C \delta t^{2k} \Big(\int_{0}^{T} (\|\frac{\partial^{k}\bm u}{\partial t^{k}}(s)\|_2^2+\|\frac{\partial^{k+1}\bm u}{\partial t^{k+1}}(s)\|^2)ds +C_0^{2k}(T+ \delta t\sum_{q=0}^n \|B_k({\bm e}^q)\|_2^2)\Big)\\
&+ C \big(C_{\Pi}^{2k}N^{2k(2-m)}+N^{2-2m}\big) \big (T+\delta t \sum_{q=0}^n \|B_k({\bm e}^q)\|_2^2\big).
\end{split}
\end{equation}
Noting that \eqref{eq:H2sumbound} and \eqref{eq:H1boundex} imply $\delta t \sum_{q=0}^{n}\|B_k({\bm e}^{q})\|_2^2 <C_{H^2}$ for some constant $C_{H^2}$ depends only on the exact solution $\bm u$.
Applying the discrete Gronwall Lemma \ref{Gron2} to \eqref{eq:error3}, we obtain
\begin{equation}\label{eq:error4}
\begin{split}
&\|\bar{\bm e}_N^{n+1}\|_1^2+\delta t \sum_{q=0}^{n+1}\|\bar{\bm e}_N^{q}\|_2^2 \\
& \le C\exp \big(C_{H^2}+1) \big)  \delta t^{2k} \int_{0}^{T} (\|\frac{\partial^{k}\bm u}{\partial t^{k}}(s)\|_2^2+\|\frac{\partial^{k+1}\bm u}{\partial t^{k+1}}(s)\|^2)ds\\
& +C\exp \big(C_{H^2}+1) \big)
(\delta t^{2k}C_0^{2k}+C_{\Pi}^{2k}N^{2k(2-m)}+N^{2-2m})(T+C_{H^2})\\
& \le C_1 (1+C_0^{2k})\delta t^{2k}+C_1(C_{\Pi}^{2k}N^{2k(2-m)}+N^{2-2m}),
\end{split}
\end{equation}
where $C_1$ is independent of $\delta t$, $C_0$,  $C_{\Pi}$, and can be defined as
\begin{equation}\label{eq:C1}
C_1:= C\exp (C_{H^2}+1)\max \Big(\int_{0}^{T} (\|\frac{\partial^{k}\bm u}{\partial t^{k}}(s)\|_2^2+\|\frac{\partial^{k+1}\bm u}{\partial t^{k+1}}(s)\|^2)ds,\,1,\,T+C_{H^2} \Big).
\end{equation}
Therefore,  \eqref{eq:error4} implies
\begin{equation}\label{eq:errorbound}
\|\bar{\bm e}_N^{n+1}\|_1^2,\,\delta t \sum_{q=0}^{n+1}\|\bar{\bm e}_N^{q}\|_2^2 \le C_1 (1+C_0^{2k})\delta t^{2k}+C_1(C_{\Pi}^{2k}N^{2k(2-m)}+N^{2-2m}).
\end{equation}
Since $\bar{\bm e}^{q}=\bar{\bm e}^q_N+\bar{\bm e}^q_{\Pi}$, it follows from the triangle inequality that
\begin{equation}\label{eq:errorbound2}
\|\bar{\bm e}^{n+1}\|_1^2 \le C_1 (1+C_0^{2k})\delta t^{2k}+C_1(C_{\Pi}^{2k}N^{2k(2-m)}+N^{2-2m})+C N^{2(1-m)},
\end{equation}
and
\begin{equation}\label{eq:errorbound3}
\delta t \sum_{q=0}^{n+1}\|\bar{\bm e}^{q}\|_2^2 \le C_1 (1+C_0^{2k})\delta t^{2k}+C_1(C_{\Pi}^{2k}N^{2k(2-m)}+N^{2-2m})+C N^{2(2-m)}.
\end{equation}
Combining \eqref{eq:H1boundex}, \eqref{eq:errorbound2} and \eqref{eq:errorbound3}, we find that, under the condition \eqref{eq: dtcond1} and $m \ge 3$, we have
\begin{equation}\label{eq:ubarbound}
\begin{split}
\|\bar{\bm u}_N^{n+1}\|_1^2,\,\delta t \sum_{q=0}^{n+1}\|\bar{\bm u}_N^{q}\|_2^2 & \le C_1 (1+C_0^{2k}\frac{1}{2^{2k(k+2)}C_0^{2k^2}})+C_1(C_{\Pi}^{2k}2^{-4k(k +1)}C_{\Pi}^{-4k^2}+1)+C\\
&\le 4C_1+C:=\bar{C}.
\end{split}
\end{equation}

\textbf{Step 3: Estimate for $|1-\xi^{n+1}|$.} It follows from \eqref{eq: NSsavE2} that the equation for  $\{s^j \}$ can be written as
\begin{equation}\label{eq:serror}
s^{q+1}-s^{q}=\delta t \nu \big(\|\Delta \bm u(t^{q+1})\|^2-\frac{r^{q+1}}{E(\bar{\bm u}_N^{q+1})+1}\|\Delta \bar{\bm u}_N^{q+1}\|^2\big)+T_q,\,\,\forall q\le n,
\end{equation}
where $T_q$ is the truncation error
\begin{equation}\label{Tn}
T_q=r(t^q)-r(t^{q+1})+\delta t r_t(t^{q+1})=\int_{t^q}^{t^{q+1}}(s-t^q)r_{tt}(s)ds.
\end{equation}
Taking the sum of \eqref{eq:serror} for $q$ from 0 to $n$, and noting that $s^0=0$, we have
\begin{equation}\label{eq:serrorsum}
s^{n+1}=\delta t \nu \sum_{q=0}^n\big(\|\Delta \bm u(t^{q+1})\|^2-\frac{r^{q+1}}{E(\bar{\bm u}_N^{q+1})+1}\|\Delta \bar{\bm u}_N^{q+1}\|^2\big)+\sum_{q=0}^{n}T_q,
\end{equation}
We bound the righthand side of \eqref{eq:serrorsum} as follows. By direct calculation, we have
\begin{equation}\label{rtt}
r_{tt}=\int_{\Omega} ((\nabla\bm u)_t^2+ \nabla \bm u (\nabla \bm u)_{tt}) dx,
\end{equation}
then from \eqref{Tn}, we have
\begin{equation*}
|T_q| \le C \delta t \int_{t^q}^{t^{q+1}}|r_{tt}|ds \le C \delta t \int_{t^q}^{t^{q+1}} (\|\bm u_t\|_1^2+\|\bm u_{tt}\|_1^2) ds,\,\forall q \le n.
\end{equation*}
By triangular inequality,
\begin{equation}\label{eq: K1K2}
\begin{split}
\big |&\|\Delta \bm u(t^{q+1})\|^2-\frac{r^{q+1}}{E(\bar{\bm u}_N^{q+1})+1}\|\Delta \bar{\bm u}_N^{q+1}\|^2 \big |\\
& \le \|\Delta \bm u(t^{q+1})\|^2\big|1-\frac{r^{q+1}}{E(\bar{\bm u}_N^{q+1})+1} \big|+\frac{r^{q+1}}{E(\bar{\bm u}_N^{q+1})+1}\big|\|\Delta \bm u(t^{q+1})\|^2-\|\Delta \bar{\bm u}_N^{q+1}\|^2 \big|\\
&:=K^q_1+K^q_2.
\end{split}
\end{equation}
It follows from \eqref{eq:H1boundex} and Theorem \ref{stableThm} that
\begin{equation}\label{eq: K1}
\begin{split}
K^q_1 & \le C \big|1-\frac{r^{q+1}}{E(\bar{\bm u}_N^{q+1})+1} \big|\\
& =C \big|\frac{r(t^{q+1})}{E[\bm u(t^{q+1})]+1}-\frac{r^{q+1}}{E[\bm u(t^{q+1})]+1} \big|+C \big|\frac{r^{q+1}}{E[\bm u(t^{q+1})]+1}-\frac{r^{q+1}}{E(\bar{\bm u}_N^{q+1})+1} \big|\\
& \le C \big(|E[\bm u(t^{q+1})]-E(\bar{\bm u}_N^{q+1})|+|s^{q+1}|\big),\,\, \forall q\le n,
\end{split}
\end{equation}
and it follows from \eqref{eq:H1boundex} and Theorem \ref{stableThm} that
\begin{equation}\label{eq: K2}
\begin{split}
K^q_2 & \le C \big |\|\Delta \bar{\bm u}_N^{q+1}\|^2-\|\Delta \bm u(t^{q+1})\|^2 \big | \\
& \le C \|\Delta \bar{\bm u}_N^{q+1}-\Delta \bm u(t^{q+1})\|(\|\Delta \bar{\bm u}^{q+1}\|+\|\Delta \bm u(t^{q+1})\|)\\
& \le C \|\Delta \bar{\bm u}_N^{q+1}\|\|\Delta \bar{\bm e}^{q+1}\|+C\|\Delta \bar{\bm e}^{q+1}\|,\,\, \forall q\le n.
\end{split}
\end{equation}
We derive from the definition of $E(\bm u)$ that
\begin{equation}\label{errorE}
|E(\bm u(t^{q+1}))-E(\bar{\bm u}_N^{q+1})| \le \frac{1}{2}(\|\nabla \bm u(t^{q+1})\|+\|\nabla \bar{\bm u}_N^{q+1}\|)\|\nabla \bm u(t^{q+1})-\nabla \bar{\bm u}_N^{q+1}\| \le C \|\nabla \bar{\bm e}^{q+1}\|.
\end{equation}
It follows from \eqref{eq:errorbound3}, \eqref{eq:ubarbound} and the Cauchy-Schwarz inequality that
\begin{equation}\label{error6}
\begin{split}
\delta t &\sum_{q=0}^{n}\|\Delta \bar{\bm u}_N^{q+1}\|\|\Delta \bar{\bm e}^{q+1}\| \le \big(\delta t \sum_{q=0}^{n}\|\Delta \bar{\bm u}^{q+1}\|^2\delta t \sum_{q=0}^{n}\|\Delta \bar{\bm e}^{q+1}\|^2\big)^{1/2} \\
& \le C \sqrt{C_1 (1+C_0^{2k})\delta t^{2k}+C_1(C_{\Pi}^{2k}N^{2k(2-m)}+N^{2-2m})+ N^{2(2-m)}}.
\end{split}
\end{equation}
Now, we are ready to estimate $s^{n+1}$. Combining the estimates obtained above, \eqref{eq:serrorsum} leads to
\begin{equation}\label{eq:errorS}
\begin{split}
|s^{n+1}| &\le \delta t \nu \sum_{q=0}^n\big |\|\nabla \bm u(t^{q+1})\|^2-\frac{r^{q+1}}{E(\bar{\bm u}^{q+1})+1}\|\nabla \bar{\bm u}_N^{q+1}\|^2\big|+\sum_{q=0}^{n}|T^q|\\
& \le C \delta t \sum_{q=0}^n |s^{q+1}|+C \delta t \sum_{q=0}^n \|\bar{\bm e}^{q+1}\|_2+C\delta t \sum_{q=0}^{n}\|\Delta \bar{\bm u}_N^{q+1}\|\|\Delta \bar{\bm e}^{q+1}\|\\
& + C \delta t \int_{0}^{t^{n+1}}(\|\bm u_t\|_1^2+\|\bm u_{tt}\|_1^2) ds \\
& \le C\sqrt{C_1 (1+C_0^{2k})\delta t^{2k}+C_1(C_{\Pi}^{2k}N^{2k(2-m)}+N^{2-2m})+N^{2(2-m)}}\\
& +C \delta t \sum_{q=0}^n |s^{q+1}|+C\delta t.
\end{split}
\end{equation}
Finally, applying Lemma \ref{Gron1} on \eqref{eq:errorS} with $\delta t<\frac{1}{2C}$, we obtain the following estimate for $s^{n+1}$:
\begin{small}
\begin{equation}\label{eq:errorS2}
\begin{split}
|s^{n+1}| & \le C \exp((1-\delta t C)^{-1})\big(\sqrt{C_1 (1+C_0^{2k})\delta t^{2k}+C_1(C_{\Pi}^{2k}N^{2k(2-m)}+N^{2-2m})+ N^{2(2-m)}}+\delta t\big )\\
& \le C_2 \big(\sqrt{C_1 (1+C_0^{2k})\delta t^{2k}+C_1(C_{\Pi}^{2k}N^{2k(2-m)}+N^{2-2m})+N^{2(2-m)}}+\delta t \big)\\
& \le  C_2 \delta t^k \sqrt{C_1 (1+C_0^{2k})}+C_2  \sqrt{C_1(C_{\Pi}^{2k}N^{2k(2-m)}+N^{2-2m})+N^{2(2-m)}}+C_2\delta t,
\end{split}
\end{equation}
\end{small}
where $C_2:=C\exp(2)$ is independent of $\delta t$ and $C_0$.
then $\delta t<\frac{1}{2C}$ can be guaranteed by
\begin{equation}\label{cond3}
\delta t <\frac{1}{C_2}.
\end{equation}
Thanks to \eqref{eq:errorbound}, \eqref{eq: K1}, \eqref{errorE}, \eqref{eq:errorS2} and $m\ge 3$ , we have
\begin{equation}\label{eq:errorxi}
\begin{split}
|1-\xi^{n+1}| &\le  C \big(|E[\bm u(t^{n+1})]-E(\bar{\bm u}^{n+1})|+|s^{n+1}|\big)\\
& \le C( \|\nabla \bar{\bm e}^{n+1}\|+|s^{n+1}|)\\
& \le C \sqrt{C_1 (1+C_0^{2k})\delta t^{2k}+C_1(C_{\Pi}^{2k}N^{2k(2-m)}+N^{2-2m})+C N^{2(1-m)}}\\
& +  C_2 \delta t^k \sqrt{C_1 (1+C_0^{2k})}+C_2  \sqrt{C_1(C_{\Pi}^{2k}N^{2k(2-m)}+N^{2-2m})+N^{2(2-m)}}+C_2\delta t\\
& \le C_3 \delta t(\sqrt{1+C_0^{2k}}\delta t^{k-1}+1)+C_3N^{2-m}\big (\sqrt{C_{\Pi}^{2k}N^{(4-2m)(k-1)}+N^{-2}+1}\big ),
\end{split}
\end{equation}
where the constant $C_3$ is independent of $C_0$, $C_{\Pi}$ , $\delta t$ and $N$. Without loss of generality, we  assume $C_3>\max\{C_1, C_2,1\}$ to simplify the proof below.

For the cases $k=2,3,4,5$,  we choose $C_0=2C_3$ and $\delta t \le \frac1{1+C_0^{k}}$ to obtain
\begin{equation}\label{C0def}
C_3(\sqrt{1+C_0^{2k}}\delta t^{k-1}+1)\le C_3[(1+C_0^{k})\delta t+1] \le 2C_3=C_0,
\end{equation}
and  since $m \ge 3$, we can choose $C_{\Pi}=3C_3$ and $N \ge C_{\Pi}^k+1$ to obtain
\begin{equation}\label{Cpidef}
C_3\big (\sqrt{C_{\Pi}^{2k}N^{(4-2m)(k-1)}+N^{-2}+1}\big )\le C_3[C_{\Pi}^k N^{2-m}+2] \le 3C_3=C_{\Pi}.
\end{equation}

For the case $k=1$, since $\eta_1^{n+1}=1-(1-\xi^{n+1})^2$,  we choose $C_0=2C_3$ and $\delta t \le \frac{1}{1+C_0^2}$ so that
\begin{equation*}\label{C0def2}
C_3(\sqrt{1+C_0^{4}}\delta t+1)\le C_3[(1+C_0^{2})\delta t+1] \le  2C_3=C_0,
\end{equation*}
and since $m \ge 3$, we choose $C_{\Pi}=3C_3$ and $N\ge C_{\Pi}^2+1$ to obtain
\begin{equation}\label{Cpidef2}
C_3\big (\sqrt{C_{\Pi}^{4}N^{(4-2m)}+N^{-2}+1}\big )\le C_3[C_{\Pi}^2 N^{2-m}+2] \le 3C_3=C_{\Pi}.
\end{equation}
To summarize, combining the above with \eqref{eq:errorxi},  we derive from \eqref{eq:errorxi} that
$$|1-\xi^{n+1}|\le C_0 \delta t+C_{\Pi}N^{2-m}$$
 under the conditions
\begin{equation}\label{cond4}
\delta t \le \frac1{1+2^{k+2} C_0^{k+1}},\quad,N \ge {2^{k+2}C_{\Pi}^{k+1}+1} \quad 1\le k \le 5.
\end{equation}
 Note that the above implies \eqref{eq: dtcond1}, and  with  $C_3>\max\{C_1, C_2,1\}$, it also implies \eqref{cond3}. The induction process for \eqref{eq: prestepsk} is complete.
\medskip

We derive from \eqref{eq: NSsavE4} and \eqref{eq:ubarbound} that
\begin{equation}\label{eq: errorxin+1h}
 \|\bm u_N^{n+1}-\bar{\bm u}_N^{n+1}\|_1^2 \le |\eta_k^{n+1}-1|^2
\|\bar {\bm u}_N^{n+1}\|_1^2 \le |\eta_k^{n+1}-1|^2 {C},
\end{equation}
and
\begin{equation}\label{eq:H2error}
\begin{split}
\delta t\sum_{q=0}^n \|{\bm u}_N^{q+1}-\bar{\bm u}_N^{q+1}\|_2^2 & \le \delta t\sum_{q=0}^n |\eta_k^{q+1}-1|^2\|\bar{\bm u}_N^{q+1}\|_2^2 \\
& \le \max_{q}|\eta_k^{q+1}-1|^2\delta t\sum_{q=0}^n \|\bar{\bm u}_N^{q+1}\|_2^2\\
& \le \max_{q}|\eta_k^{q+1}-1|^2 {C}.
\end{split}
\end{equation}
On the other hand, we derive from \eqref{eq: prestepsk} that
\begin{subequations}\label{eq: errorxin+2h}
\begin{align}
&|\eta_1^{q+1}-1| \le 2^2 C_0^{2} \delta t^{2}+2^2 C_{\Pi}^2 N^{2(2-m)},\qquad \forall q \le n \quad k=1,\\
&|\eta_k^{q+1}-1| \le 2^k C_0^{k} \delta t^{k}+2^k C_{\Pi}^k N^{k(2-m)},\qquad \forall q \le n \quad k=2,3,4,5.
\end{align}
\end{subequations}
Therefore, we derive from \eqref{eq:errorbound2}, \eqref{eq:errorbound3}, \eqref{eq: errorxin+1h}, \eqref{eq:H2error},  \eqref{eq: errorxin+2h} and the triangle inequality that
\begin{equation*}
\|\bm e^{n+1}\|_1^2 \le \| \bar{\bm e}^{n+1}\|_1^2+ \|\bm u_N^{n+1}-\bar{\bm u}_N^{n+1}\|_1^2,
\end{equation*}
and
\begin{equation*}
\|\bm e^{q+1}\|_2^2 \le \| \bar{\bm e}^{q+1}\|_2^2+ \|\bm u_N^{q+1}-\bar{\bm u}_N^{q+1}\|_2^2, \quad  \forall q \le n,
\end{equation*}
 under the condition \eqref{cond4} on $\delta t$ and $N$. The proof is now complete since we already proved
 \eqref{eq:errorbound2} and  \eqref{eq:errorbound3}.
 \end{proof}

 Using exactly the same procedure above without the spatial discretization, we can prove the following result for the semi-discrete schemes \eqref{eq: NSsavb}.
 \begin{cor}\label{ThmNSb}
 Let $d=2$, $T>0$, $\bm u_0\in  \bm V\cap\bm H_p^2$  and $\bm u$ be the solution of \eqref{eq:NS}.
We assume that $\bar {\bm u}^i$ and $\bm u^i$ $(i=1,\cdots,k-1)$ are computed with a proper initialization procedure such that for $(i=1,\cdots,k-1)$,
\begin{equation*}\label{inikb}
\begin{split}
\|\bar {\bm u}^i-\bm u(t_i)\|_1,\;\| \bm u^i-\bm u(t_i)\|_1=O(\delta t^k);\quad
\|\bar {\bm u}^i-\bm u(t_i)\|_2,\;\| \bm u^i-\bm u(t_i)\|_2=O(\delta t^k),
\end{split} \quad i=1,2,3,4,5.
\end{equation*}
Let $\bar {\bm u}^{n+1}$ and $\bm u^{n+1}$ be computed with the $k-$th order scheme  \eqref{eq: NSsavb} $(1 \le k \le5)$, and
\begin{equation*}
 \eta_1^{n+1}=1-(1-\xi^{n+1})^{2}, \quad \eta_k^{n+1}=1-(1-\xi^{n+1})^{k}\; (k=2,3,4,5).
\end{equation*}
Then for  $n+1 \le T/\delta t$ and $\delta t \le \frac1{1+2^{k+2} C_0^{k+1}}$, we have
\begin{equation*}
\|\bar{\bm u}^{n}-\bm u(\cdot,t^n)\|_1^2,\,\| {\bm u}^{n}-\bm u(\cdot,t^n)\|_1^2 \le C \delta t^{2k},
\end{equation*}
and
\begin{equation*}
\delta t\sum_{q=0}^n \|\bar {\bm u}^{q+1}-\bm u(\cdot,t^{q+1})\|_2^2,\,\delta t\sum_{q=0}^n \|{\bm u}^{q+1}-\bm u(\cdot,t^{q+1})\|_2^2 \le C \delta t^{2k}.
\end{equation*}
where the constants $C_0$ and $C$ are dependent on $T,\, \Omega,$ the $k\times k$ matrix $G=(g_{ij})$ in Lemma \ref{lemmaODEH} and the exact solution $\bm u$, but are independent of $\delta t$.


\end{cor}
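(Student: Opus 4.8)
The plan is to mirror the proof of Theorem \ref{ThmNS} essentially verbatim, deleting every term that involves the spatial discretization parameter $N$ (equivalently, formally setting $C_\Pi=0$ and $\bm e_\Pi^q\equiv 0$, so that $\bar{\bm e}^q=\bar{\bm e}_N^q$ and $\bm e^q=\bm e_N^q$). First I would record the regularity needed: for $d=2$ and $\bm u_0\in\bm V\cap\bm H_p^2$, the results of \cite{Tema83,foias1989gevrey} give $\bm u\in C([0,T];\bm H_p^2)$, $\partial_t^j\bm u\in L^2(0,T;\bm H_p^2)$ for $1\le j\le k$, and $\partial_t^{k+1}\bm u\in L^2(0,T;\bm L_0^2)$, which is exactly what is needed to control the consistency error $R_k^q$ of \eqref{eq: R1high} and the extrapolation errors in the nonlinear term. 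As in the proof of Theorem \ref{ThmNS}, the core of the argument is the induction statement
\begin{equation*}
|1-\xi^q|\le C_0\,\delta t,\qquad\forall\, q\le T/\delta t,
\end{equation*}
which is trivial at $q=0$ (using the prescribed initialization) and which I would propagate from "$q\le n$" to "$q=n+1$" in three steps.

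In Step 1, under the induction hypothesis and a smallness condition $\delta t\le\min\{2^{-(k+2)}C_0^{-k},1\}$, the identity $\eta_k^q=\xi^q P_{k-1}(\xi^q)$ together with the semi-discrete form of the stability result in Theorem \ref{stableThm} yields $\frac{1}{2}<|\eta_k^q|<2$, the uniform $H^1$ bound $\|\bar{\bm u}^q\|_1\le 2M_k$, and the summability bounds $\nu\,\delta t\sum_q\|\Delta\bar{\bm u}^q\|^2\le 4r^0$ and $\nu\,\delta t\sum_q\|\Delta\bm u^q\|^2\le 16r^0$; here it is essential that in two dimensions the SAV is built on the stronger dissipation law \eqref{eq:diss2d}, so the energy control lives in $H^1$ rather than $L^2$. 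In Step 2, I would write the error equation \eqref{eq:error} (with no projection term), test it with $v=-\Delta\bar{\bm e}^{q+1}+\tau_k\Delta\bar{\bm e}^q$, and apply the Nevanlinna--Odeh $G$-stability identity of Lemma \ref{lemmaODEH} to turn the BDF-$k$ term into a telescoping quadratic form plus a nonnegative square. The right-hand side splits into the consistency term, controlled via \eqref{eq: absR1high}, and the nonlinear term $Q_k^q$, which I would decompose into four pieces as in \eqref{eq:R2split} and bound using the two-dimensional trilinear estimates \eqref{eq:ineq2d}--\eqref{eq:ineq2} and the Step 1 bounds; after absorbing the $\epsilon\|\Delta\bar{\bm e}\|^2$ contributions into the left-hand side (using $\tau_k<1$, cf. \eqref{eq:epsilon}) and summing in $q$, the discrete Gronwall Lemma \ref{Gron2} gives $\|\bar{\bm e}^{n+1}\|_1^2+\delta t\sum_q\|\bar{\bm e}^q\|_2^2\le C_1(1+C_0^{2k})\delta t^{2k}$, and hence the a priori bounds $\|\bar{\bm u}^{n+1}\|_1^2+\delta t\sum_q\|\bar{\bm u}^q\|_2^2\le\bar C$.

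In Step 3, I would close the induction by estimating $s^{n+1}=r^{n+1}-r(t^{n+1})$. Summing the scalar error equation \eqref{eq:serror}, the truncation sum $\sum_q T_q$ is $O(\delta t)$ by \eqref{Tn}--\eqref{rtt}, while the difference $\big|\|\Delta\bm u(t^{q+1})\|^2-\frac{r^{q+1}}{E(\bar{\bm u}^{q+1})+1}\|\Delta\bar{\bm u}^{q+1}\|^2\big|$ is split as in \eqref{eq: K1K2} and controlled by $|s^{q+1}|$, $\|\nabla\bar{\bm e}^{q+1}\|$, and $\|\Delta\bar{\bm u}^{q+1}\|\,\|\Delta\bar{\bm e}^{q+1}\|$; a Cauchy--Schwarz step using the $l^2(H^2)$ bound from Step 2, followed by the discrete Gronwall Lemma \ref{Gron1}, produces $|s^{n+1}|\le C_2\delta t^k\sqrt{C_1(1+C_0^{2k})}+C_2\delta t$, whence $|1-\xi^{n+1}|\le C(\|\nabla\bar{\bm e}^{n+1}\|+|s^{n+1}|)\le C_3\delta t(\sqrt{1+C_0^{2k}}\,\delta t^{k-1}+1)$. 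Choosing $C_0=2C_3$ (and, for $k=1$, accounting for $\eta_1=1-(1-\xi)^2$) together with $\delta t\le(1+2^{k+2}C_0^{k+1})^{-1}$ makes the right-hand side $\le C_0\delta t$, completing the induction. Finally, the passage from $\bar{\bm u}^{n+1}$ to $\bm u^{n+1}=\eta_k^{n+1}\bar{\bm u}^{n+1}$ uses $|\eta_k^{q+1}-1|\le 2^kC_0^k\delta t^k$ (from the induction conclusion and \eqref{eq:inequk}) and the a priori bound $\|\bar{\bm u}^{q+1}\|_1,\|\bar{\bm u}^{q+1}\|_2\le\bar C$, yielding $\|\bm u^{n+1}-\bar{\bm u}^{n+1}\|_1$ and $\big(\delta t\sum_q\|\bm u^{q+1}-\bar{\bm u}^{q+1}\|_2^2\big)^{1/2}$ of order $\delta t^k$, after which the claimed $l^\infty(H^1)\cap l^2(H^2)$ estimates follow by the triangle inequality. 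The main obstacle, exactly as in Theorem \ref{ThmNS}, is the self-consistency of the induction: the $H^2$-weighted estimate in Step 2 generates a factor $\|B_k(\bm e^q)\|_2^2$ multiplying $\|\bar{\bm e}^q\|_1^2$ inside the Gronwall sum, so one must already have the uniform $l^2(H^2)$ bound on the numerical solution (itself a consequence of the 2D dissipation law \eqref{eq:diss2d}) in hand, and one must fix the constants $C_0,C_1,C_2,C_3$ and the step-size/threshold conditions in a strictly non-circular order.
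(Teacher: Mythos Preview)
Your proposal is correct and follows exactly the approach the paper indicates: the paper's own justification for this corollary is the single sentence ``Using exactly the same procedure above without the spatial discretization, we can prove the following result,'' and your write-up is a faithful, detailed unwinding of that procedure with the $C_\Pi$- and $\bm e_\Pi$-terms suppressed. One small imprecision: near the end you invoke a pointwise bound ``$\|\bar{\bm u}^{q+1}\|_2\le\bar C$,'' but only $\delta t\sum_q\|\bar{\bm u}^{q+1}\|_2^2\le\bar C$ is available (cf.\ \eqref{eq:ubarbound}); this is harmless since your conclusion $\big(\delta t\sum_q\|\bm u^{q+1}-\bar{\bm u}^{q+1}\|_2^2\big)^{1/2}=O(\delta t^k)$ follows from $\max_q|\eta_k^{q+1}-1|^2\cdot\delta t\sum_q\|\bar{\bm u}^{q+1}\|_2^2$ exactly as in \eqref{eq:H2error}.
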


\subsection{Error analysis for the velocity in 3D}
In the three-dimensional case, it is no longer possible to obtain the global estimates \eqref{eq:uL2bound}, \eqref{eq:H1sumbound} and \eqref{eq:H2sumbound} as in the two-dimensional case. Instead, we shall derive  local estimates in analogy to the local existence of strong solution for the 3-D Navier-Stokes equations.

\begin{theorem}\label{ThmNS3D}
 Let $d=3$, $T>0$, $\bm u_0\in \bm V\cap \bm H_p^m$ with $m\ge 3$. We assume that \eqref{eq:NS}  admits a unique strong solution  $\bm u$ in $C([0,T];\bm H_p^1)\cap L^2(0,T;\bm H_p^2)$.
 We assume \eqref{inik} as in {\bf Theorem 2}, and let $\bar {\bm u}_N^{n+1}$ and $\bm u_N^{n+1}$ be computed using the $k$th-order scheme  \eqref{eq: NSsav} $(1 \le k \le5)$, and
\begin{equation*}
 \eta_1^{n+1}=1-(1-\xi^{n+1})^{2}, \quad \eta_k^{n+1}=1-(1-\xi^{n+1})^{k}\; (k=2,3,4,5).
\end{equation*}
Then, there exits $T_*>0$  such that for $0<T<T_*$, $n+1 \le T/\delta t$ and
$\delta t \le \frac1{1+2^{k+2} C_0^{k+1}}$, $N \ge {2^{k+2}C_{\Pi}^{k+1}+1} $, we have
\begin{equation}\label{eq:error3d}
\|\bar{\bm u}_N^{n}-\bm u(\cdot,t^n)\|_1^2,\,\| {\bm u}_N^{n}-\bm u(\cdot,t^n)\|_1^2 \le C \delta t^{2k}+C N^{2(1-m)},
\end{equation}
and
\begin{equation}\label{eq:error3d2}
\delta t\sum_{q=0}^n \|\bar {\bm u}_N^{q+1}-\bm u(\cdot,t^{q+1})\|_2^2,\,\delta t\sum_{q=0}^n \|{\bm u}_N^{q+1}-\bm u(\cdot,t^{q+1})\|_2^2 \le C \delta t^{2k}+C N^{2(2-m)},
\end{equation}
where the constants $C_0$, $C_{\Pi}$, $C$ are dependent on $T,\, \Omega,$ the $k\times k$ matrix $G=(g_{ij})$ in Lemma \ref{lemmaODEH} and the exact solution $\bm u$, but are independent of $\delta t$ and $N$.
\end{theorem}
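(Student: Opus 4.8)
The plan is to run the same induction as in the proof of Theorem~\ref{ThmNS}, keeping the induction hypothesis \eqref{eq: prestepsk}, namely $|1-\xi^q|\le C_0\delta t+C_\Pi N^{2-m}$ for $q\le n$, but replacing the global a priori bounds \eqref{eq:uL2bound}, \eqref{eq:H1sumbound}, \eqref{eq:H2sumbound} — which came from the purely two-dimensional identities \eqref{tri2}, \eqref{eq:diss2d} and are unavailable when $d=3$ — by \emph{local-in-time} bounds produced by the comparison Lemma~\ref{locallemma}. All estimates will then be carried out on an interval $[0,T_*]$ whose length is dictated by that lemma, in exact analogy with the local existence theory for the 3-D Navier--Stokes equations. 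Note that the $L^2$ stability bound \eqref{eq: L2bound} and the dissipation identity \eqref{eq: energystable} of Theorem~\ref{stableThm} still hold in 3D, so $\|\bm u_N^q\|$ and $\|\bar{\bm u}_N^q\|$ remain bounded unconditionally; only the $H^1$ and $\ell^2(H^2)$ bounds must be recovered by a separate argument.

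\emph{Local a priori bound (the 3-D replacement for Step~1).} I would take $v_N=-\Delta\bar{\bm u}_N^{q+1}+\tau_k\Delta\bar{\bm u}_N^{q}$ in \eqref{eq: NSsavE1} and apply the Nevanlinna--Odeh identity (Lemma~\ref{lemmaODEH}): the linear part yields the $G$-telescoping sum, a nonnegative square term, and $\delta t\nu\|\Delta\bar{\bm u}_N^{q+1}\|^2$, against which the cross term $\delta t\nu\tau_k(\Delta\bar{\bm u}_N^{q+1},\Delta\bar{\bm u}_N^{q})$ is partially absorbed using $\tau_k<1$. For the convective term I would use the three-dimensional estimate \eqref{eq:ineq3d} and the interpolation $\|\nabla\bm v\|_{1/2}\le C\|\bm v\|_1^{1/2}\|\bm v\|_2^{1/2}$, producing a bound of the form $C\|B_k(\bm u_N^q)\|_1^{3/2}\|B_k(\bm u_N^q)\|_2^{1/2}\|-\Delta\bar{\bm u}_N^{q+1}+\tau_k\Delta\bar{\bm u}_N^{q}\|$; a triple use of Young's inequality converts $\delta t$ times this into $\epsilon\,\delta t(\|\Delta\bar{\bm u}_N^{q+1}\|^2+\|\Delta\bar{\bm u}_N^{q}\|^2)+\delta\,\delta t\|B_k(\bm u_N^q)\|_2^2+C\,\delta t\,\|B_k(\bm u_N^q)\|_1^6$. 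Expanding $B_k$ over its finitely many stencil points, using $\eta_k^q\approx1$ (induction hypothesis) to pass from $\bm u_N^{q-j}$ to $\bar{\bm u}_N^{q-j}$, bounding $\|\bar{\bm u}_N^{q-j}\|_2^2\le C(\|\Delta\bar{\bm u}_N^{q-j}\|^2+\|\bar{\bm u}_N^{q-j}\|^2)$ with \eqref{eq: L2bound} for the low-order piece, summing over $q$ and absorbing the $\epsilon$- and $\delta$-multiples of $\|\Delta\bar{\bm u}_N^{\,\cdot}\|^2$ into the left-hand dissipation term, one reaches
\[
\lambda_G\|\nabla\bar{\bm u}_N^{n+1}\|^2+c\,\delta t\sum_{q\le n}\|\Delta\bar{\bm u}_N^{q+1}\|^2\le M_0+C\,\delta t\sum_{q\le n}\big(1+\|\nabla\bar{\bm u}_N^{q}\|^6\big),
\]
with $M_0$ collecting the $G$-form of the initial data \eqref{inik} and the constant $L^2$-contributions. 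Taking $z_q:=\lambda_G\|\nabla\bar{\bm u}_N^q\|^2$, $w_q:=c\|\Delta\bar{\bm u}_N^{q+1}\|^2$ and $\phi(z):=C(1+(z/\lambda_G)^3)$, continuous and increasing on $(0,\infty)$, Lemma~\ref{locallemma} then produces $T_*\in(0,\int_{M_0}^{\infty}dz/\phi(z))$ and $C_*$, independent of $\delta t$ and $N$, with
\[
\|\nabla\bar{\bm u}_N^{q}\|^2\le C_*,\qquad \delta t\sum_{q\le n}\|\Delta\bar{\bm u}_N^{q+1}\|^2\le C_*,\qquad \forall\,q\le n,\ \ n\delta t\le T_*,
\]
and the analogous bounds for $\bm u_N^q=\eta_k^q\bar{\bm u}_N^q$. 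These are the 3-D substitutes for \eqref{eq:uL2bound}--\eqref{eq:H2sumbound}.

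\emph{Error estimate and closing of the induction (Steps~2--3).} With the local bounds above, Steps~2 and~3 of the proof of Theorem~\ref{ThmNS} transfer almost word for word to $[0,T_*]$: the error equation \eqref{eq:error}, the splitting \eqref{eq:R2split}, and the term-by-term estimates \eqref{eq:non1}--\eqref{eq:non4} stay valid, since the trilinear inequalities used there — \eqref{eq:ineq2}, valid for $d\le4$, and the $\|\cdot\|_1\|\cdot\|_2\|\cdot\|$ estimate on the second line of \eqref{eq:non3}, flagged there as true in both 2D and 3D — do not require $d=2$. The one real change is that in 3D the exact solution need not be bounded in $H^2$ pointwise, so instead of \eqref{eq:H1boundex} I would retain the factors $\|\bm u(t^{q+1})\|_2^2$ inside the sums of \eqref{eq:non1}--\eqref{eq:non2} and control $\delta t\sum_q\|\bm u(t^{q+1})\|_2^2$ and $\delta t\sum_q\|B_k(\bm e^q)\|_2^2$ by $C_*+C\int_0^{T_*}\|\bm u(s)\|_2^2\,ds$, using the hypothesis $\bm u\in L^2(0,T;\bm H_p^2)$; this bounded quantity plays the role \eqref{eq:H2sumbound} played before. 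One then obtains the analogue of \eqref{eq:error3}, applies discrete Gronwall (Lemma~\ref{Gron2}) to get the analogue of \eqref{eq:error4} for $\bar{\bm e}_N^{n+1}$, and the estimate \eqref{eq:errorxi} for $|1-\xi^{n+1}|$ and the closure of the induction under the conditions \eqref{cond4} are then identical to the 2-D case with $T$ replaced by $T_*$. Finally \eqref{eq:error3d}--\eqref{eq:error3d2} for $\bm e_N$ follow from $\bm u_N^{n+1}=\eta_k^{n+1}\bar{\bm u}_N^{n+1}$, $|\eta_k^{n+1}-1|\le C(C_0\delta t+C_\Pi N^{2-m})$, and the just-established bounds on $\bar{\bm u}_N^{n+1}$.

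\emph{Main obstacle.} The crux is the local a priori bound. The Young-inequality splitting of the convective term has to be arranged so that (i) only $\epsilon$-multiples of $\|\Delta\bar{\bm u}_N^{q+1}\|^2$ and of the finitely many shifts $\|\Delta\bar{\bm u}_N^{q-j}\|^2$ survive — so they can be absorbed by the single left-hand dissipation term after summation — while (ii) the remaining forcing depends on $\|\nabla\bar{\bm u}_N^q\|^2$ only, so that the superlinear comparison Lemma~\ref{locallemma} applies and genuinely yields a positive $T_*$. A secondary subtlety is that this a priori estimate and the induction on $\xi$ are coupled: passing from $\|B_k(\bm u_N^q)\|$ to $\|B_k(\bar{\bm u}_N^q)\|$ needs $\eta_k^q\approx1$, which is itself the induction hypothesis, so both must be advanced within one and the same induction, and one must check that the step-size and resolution conditions \eqref{cond4} inherited from Theorem~\ref{ThmNS} are exactly what is required. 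Everything else is a routine transcription of the two-dimensional argument with the finite horizon $T_*$ in place of $T$.
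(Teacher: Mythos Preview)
Your proposal is correct and follows essentially the same route as the paper: the paper likewise keeps the induction hypothesis on $|1-\xi^q|$, tests \eqref{eq: NSsavE1} with $-\Delta\bar{\bm u}_N^{q+1}+\tau_k\Delta\bar{\bm u}_N^{q}$, uses \eqref{eq:ineq3d} plus interpolation and Young to produce a $\|B_k(\bm u_N^q)\|_1^6$ forcing, passes to $\bar{\bm u}_N^q$ via $\tfrac12\le|\eta_k^q|\le2$, and invokes Lemma~\ref{locallemma} (with $\phi$ of the same cubic growth you identified) to obtain the local bound \eqref{eq:H2bound3d}, after which Steps~2--3 of Theorem~\ref{ThmNS} are repeated verbatim on $[0,T_*]$. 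One small simplification: your care about the exact solution possibly failing to be in $C([0,T];\bm H_p^2)$ is unnecessary here, since the paper invokes the regularity results of \cite{Tema83,foias1989gevrey} to conclude that \eqref{regNS}, and in particular $\bm u\in C([0,T];\bm H_p^m)$ with $m\ge3$, continues to hold in the 3-D setting under the stated hypotheses, so \eqref{eq:H1boundex} is available as written.
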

\begin{proof}
 
The proof follows essentially the same procedure as the proof for {\bf Theorem \ref{ThmNS}}. However,  since we only has
the weak version of the stability in {\bf Theorem 1}   and \eqref{eq:ineq2d} is not valid   when $d=3$,   we can only get a local version of  \eqref{eq:uL2bound} and \eqref{eq:H1sumbound}. To simplify the presentation, we shall only point out below  the main differences with  the proof for {\bf Theorem \ref{ThmNS}}.

With  $\bm u_0\in \bm H_p^m$ and the existence of a unique strong solution  $\bm u$ in $C([0,T];\bm H_p^1)\cap L^2(0,T;\bm H_p^2)$, regularity results in \cite{Tema83,foias1989gevrey}  imply that   \eqref{regNS} is also valid in the three-dimensional case.

In \textbf{Step 1}, we still assume \eqref{eq: prestepsk2} holds and choose $\delta t$ and $N$ satisfies \eqref{eq: dtcond1}. Let $v_N=-\Delta \bar{\bm u}^{n+1}+\tau_k\Delta \bar{\bm u}^n$ in \eqref{eq: NSsavE1}, it follows from Lemma \ref{lemmaODEH} that
\begin{equation}\label{eq:bound3d}
\begin{split}
\sum_{i,j=1}^{k}g_{ij}&( \nabla \bar{\bm u}_N^{q+1+i-k}, \nabla \bar{\bm u}_N^{q+1+j-k}) -\sum_{i,j=1}^{k}g_{ij}( \nabla \bar{\bm u}_N^{q+i-k}, \nabla \bar{\bm u}_N^{q+j-k})\\
&+\|\sum_{i=0}^{k} \delta_i \nabla \bar{\bm u}_N^{q+1+i-k}\|^2
+\delta t \nu \|\Delta \bar{\bm u}_N^{q+1}\|^2\\
& = \delta t \nu (\Delta \bar{\bm u}_N^{q+1}, \tau_k \Delta \bar{\bm u}_N^{q}) +\delta t (\textbf{A}\big((B_k({\bm u}_N^q) \cdot \nabla) B_k({\bm u}_N^q)\big), -\Delta \bar{\bm u}_N^{q+1}+\tau_k \Delta \bar{\bm u}_N^{q}).
\end{split}
\end{equation}
We now bound the right hand side of \eqref{eq:bound3d}. Note that \eqref{eq: dtcond1} implies
\begin{equation*}
 \frac{1}{2}<1-(\frac{\delta t^{k-1}}{4}+\frac{N^{k(2-m)+1}}{4})\le |\eta_k^q| \le 1+\frac{\delta t^{k-1}}{4}+\frac{N^{k(2-m)+1}}{4}<2,\,\, \forall q\le n.
\end{equation*}
First, we have
\begin{equation}\label{eq:3dbound2}
|\delta t \nu (\Delta \bar{\bm u}_N^{q+1}, \tau_k \Delta \bar{\bm u}_N^{q})| \le \delta t \frac{\nu}{2}\|\Delta \bar{\bm u}_N^{q+1}\|^2+\delta t \frac{\nu\tau_k}{2}\|\Delta \bar{\bm u}_N^{q}\|^2.
\end{equation}
Next, it follows from \eqref{eq:ineq3d} that
\begin{equation}\label{eq:3dbound3}
\begin{split}
|(\textbf{A}\big(& (B_k({\bm u}_N^q) \cdot \nabla) B_k({\bm u}_N^q)\big), -\Delta \bar{\bm u}_N^{q+1}+\tau_k \Delta \bar{\bm u}_N^{q})| \\
&\le C\|B_k({\bm u}_N^q)\|_1\|B_k(\nabla {\bm u}_N^q)\|_{1/2}\|-\Delta \bar{\bm u}_N^{q+1}+\tau_k \Delta \bar{\bm u}_N^{q}\|\\
& \le C\|B_k({\bm u}_N^q)\|_1\|B_k( {\bm u}_N^q)\|_{1}^{1/2}\|B_k( {\bm u}_N^q)\|_{2}^{1/2}\|-\Delta \bar{\bm u}_N^{q+1}+\tau_k \Delta \bar{\bm u}_N^{q}\| \\
& \le C(\epsilon)\|B_k({\bm u}_N^q)\|_1^3\|B_k( {\bm u}_N^q)\|_{2}+\epsilon\|-\Delta \bar{\bm u}_N^{q+1}+\tau_k \Delta \bar{\bm u}_N^{q}\|^2\\
& \le C(\epsilon)\|B_k({\bm u}_N^q)\|_1^6+\epsilon \|B_k({\bm u}_N^q)\|_{2}^2+2\epsilon\|\Delta \bar{\bm u}_N^{q+1}\|^2+2\epsilon\|\Delta \bar{\bm u}_N^{q}\|^2.
\end{split}
\end{equation}
Now, combining \eqref{eq:bound3d}-\eqref{eq:3dbound3} and noting that $\bm u_N^q= \eta_k^q \bar{\bm u}_N^q$, we find after dropping some unnecessary terms that
\begin{equation}\label{eq:tobesum3d}
\begin{split}
\sum_{i,j=1}^{k}g_{ij}&(\nabla \bar{\bm u}_N^{q+1+i-k}, \nabla \bar{\bm u}_N^{q+1+j-k}) -\sum_{i,j=1}^{k}g_{ij}( \nabla \bar{\bm u}_N^{q+i-k}, \nabla \bar{\bm u}_N^{q+j-k})+\delta t(\frac{\nu}{2}-2\epsilon)\|\Delta \bar{\bm u}_N^{q+1}\|^2\\
& \le \delta t (\frac{\nu \tau_k }{2}+2\epsilon) \|\Delta \bar{\bm u}_N^{q}\|^2+ \epsilon \delta t\|B_k({\bm u}_N^q)\|_{2}^2+C(\epsilon)\delta t\|B_k({\bm u}_N^q)\|_1^6\\
& \le \delta t (\frac{\nu \tau_k }{2}+2\epsilon) \|\Delta \bar{\bm u}_N^{q}\|^2+ 2^2\epsilon \delta t  \|B_k(\bar {\bm u}_N^q)\|_{2}^2+ 2^6 C(\epsilon)\delta t\|B_k(\bar {\bm u}_N^q)\|_1^6
\end{split}
\end{equation}
Taking the sum of \eqref{eq:tobesum3d} for $q$ from $k-1$ to $n-1$, noting that $G=(g_{ij})$ is a symmetric positive definite  matrix with the minimum eigenvalue $\lambda_G$ and $\tau_k<1$, we can choose $\epsilon$ small enough such that:
\begin{equation*}
\begin{split}
 \lambda_G \|\bar{\bm u}_N^{n}\|_1^2&  + \frac{\delta t\nu(1-\tau_k)}{4} \sum_{q=0}^{n}\|\Delta \bar{\bm u}_N^{q}\|^2 \\
 & \le  \sum_{i,j=1}^{k}g_{ij} (\nabla \bar{\bm u}^{n+i-k}, \nabla \bar{\bm u}^{n+j-k}) + \frac{\delta t\nu(1-\tau_k)}{4} \sum_{q=0}^{n}\|\Delta \bar{\bm u}_N^{q}\|^2\\
& \le C \delta t \sum_{q=0}^{n-1} \|\bar{\bm u}_N^q\|_1^6 +M_0,
\end{split}
\end{equation*}
where $M_0>0$ is a constant only depends on $\bar{\bm u}_N^0,...,\bar{\bm u}_N^k,\, g_{ij}$. If we define $\phi$ as
$ \phi(x)=x^6 $
and let
\begin{equation}\label{eq:Tstar}
0<T_*<\int_{M_0}^{\infty}dz/\phi(z),
\end{equation}
then Lemma \ref{locallemma}
implies that there exist $C_*>0$ independent of $\delta t$ such that
\begin{equation}\label{eq:H2bound3d}
\|\bar{\bm u}_N^{n}\|_1^2 +  \delta t \sum_{q=0}^{n}\|\Delta \bar{\bm u}_N^{q}\|^2 \le C_*, \quad \forall n<T_*/\delta t.
\end{equation}
With \eqref{eq:H2bound3d} holds true, we can then prove \eqref{eq:error3d} and \eqref{eq:error3d2} by following  the same procedures in \textbf{Step 2} and \textbf{Step 3} in the proof of {\bf Theorem \ref{ThmNS}}.
\end{proof}

Similarly, we can prove the following result for the semi-discrete scheme \eqref{eq: NSsavb}.
\begin{cor}\label{ThmNS3Db} 
 Let $d=3$, $T>0$, $\bm u_0\in \bm V\cap \bm H_p^m$ with $m\ge 3$. We assume that \eqref{eq:NS}  admits a unique strong solution  $\bm u$ in $C([0,T];\bm H_p^1)\cap L^2(0,T;\bm H_p^2)$.
 We assume \eqref{inik} as in {\bf Theorem 2}, and let $\bar {\bm u}^{n+1}$ and $\bm u^{n+1}$ be computed using the $k$th-order schemes  \eqref{eq: NSsavb}, and
\begin{equation*}
 \eta_1^{n+1}=1-(1-\xi^{n+1})^{2}, \quad \eta_k^{n+1}=1-(1-\xi^{n+1})^{k}\; (k=2,3,4,5).
\end{equation*}
Then, there exits $T_*>0$  such that for $0<T<T_*$, $n+1 \le T/\delta t$ and
$\delta t \le \frac1{1+2^{k+2} C_0^{k+1}}$, $N \ge {2^{k+2}C_{\Pi}^{k+1}+1} $, we have
\begin{equation*}\label{eq:error3db}
\|\bar{\bm u}^{n}-\bm u(\cdot,t^n)\|_1^2,\,\| {\bm u}^{n}-\bm u(\cdot,t^n)\|_1^2 \le C \delta t^{2k},
\end{equation*}
and
\begin{equation*}\label{eq:error3d2b}
\delta t\sum_{q=0}^n \|\bar {\bm u}^{q+1}-\bm u(\cdot,t^{q+1})\|_2^2,\,\delta t\sum_{q=0}^n \|{\bm u}^{q+1}-\bm u(\cdot,t^{q+1})\|_2^2 \le C \delta t^{2k},
\end{equation*}
where $T_*$ is defined in \eqref{eq:Tstar}, the constants $C_0$, $C_{\Pi}$, $C$ are dependent on $T_*,\, \Omega,$ the $k\times k$ matrix $G=(g_{ij})$ in Lemma \ref{lemmaODEH} and the exact solution $\bm u$, but are independent of $\delta t$.
\end{cor}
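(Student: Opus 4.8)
The plan is to repeat the proof of Theorem \ref{ThmNS3D} with the spatial discretization switched off: since there is no Fourier--Galerkin projection, $\bm e_\Pi^n\equiv0$, every term of the form $C_\Pi N^{2-m}$ or $N^{2(2-m)}$ disappears from the estimates, and the only smallness requirement that survives is the one on $\delta t$. The working solution of \eqref{eq: NSsavb} inherits the regularity \eqref{regNS} in the 3D case (under the standing hypothesis that a unique strong solution exists in $C([0,T];\bm H^1_p)\cap L^2(0,T;\bm H^2_p)$, via \cite{Tema83,foias1989gevrey}), and Theorem \ref{stableThm} supplies $r^{n+1}\ge0$, $\xi^{n+1}\ge0$ together with the weak dissipation $r^{n+1}-r^n=-\delta t\,\nu\,\xi^{n+1}\|\nabla\bar{\bm u}^{n+1}\|^2\le0$. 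As in the main proof the core is the inductive claim $|1-\xi^q|\le C_0\,\delta t$ for all $q\le T/\delta t$, the constant $C_0$ being pinned down only at the end.

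For the induction step, assume $|1-\xi^q|\le C_0\,\delta t$ for $q\le n$ with $\delta t$ small. First, exactly as in Step 1 of Theorem \ref{ThmNS3D}, this gives $\tfrac12<|\eta_k^q|<2$ and $|\eta_k^q-1|\le C_0^k\delta t^k$ for $q\le n$. Next, test \eqref{eq: NSsav1b} with $v_N=-\Delta\bar{\bm u}^{q+1}+\tau_k\Delta\bar{\bm u}^q$, use the Nevanlinna--Odeh identity of Lemma \ref{lemmaODEH}, and bound the convective term by the $d=3$ inequality \eqref{eq:ineq3d}, interpolation of Sobolev norms, and repeated Young's inequality, which contributes $C(\epsilon)\|B_k(\bm u^q)\|_1^6+\epsilon\|B_k(\bm u^q)\|_2^2$ plus terms absorbable into $\|\Delta\bar{\bm u}^{q+1}\|^2$ and $\|\Delta\bar{\bm u}^q\|^2$; summing in $q$ and using $\tau_k<1$ to absorb on the left yields
\[
\lambda_G\|\bar{\bm u}^{n}\|_1^2+\frac{\delta t\,\nu(1-\tau_k)}{4}\sum_{q=0}^n\|\Delta\bar{\bm u}^q\|^2\le C\,\delta t\sum_{q=0}^{n-1}\|\bar{\bm u}^q\|_1^6+M_0,
\]
with $M_0$ depending only on $\bar{\bm u}^0,\dots,\bar{\bm u}^{k-1}$ and $G$. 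Lemma \ref{locallemma} with $\phi(x)=x^6$ and $T_*$ as in \eqref{eq:Tstar} then produces a uniform bound $\|\bar{\bm u}^n\|_1^2+\delta t\sum_{q=0}^n\|\Delta\bar{\bm u}^q\|^2\le C_*$ for all $n<T_*/\delta t$; since $|\eta_k^q|$ is bounded this also controls $\delta t\sum_q\|\bm u^q\|_2^2$. This local bound is the substitute for the global 2D bounds \eqref{eq:uL2bound}--\eqref{eq:H2sumbound}.

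With this $l^\infty(H^1)\cap l^2(H^2)$ control on $n<T_*/\delta t$ in hand, Steps 2 and 3 run as in the proof of Theorem \ref{ThmNS}: write the error equation for $\bar{\bm e}^{q+1}=\bar{\bm u}^{q+1}-\bm u(t^{q+1})$, test with $-\Delta\bar{\bm e}^{q+1}+\tau_k\Delta\bar{\bm e}^q$, apply Lemma \ref{lemmaODEH}, bound the BDF$k$ truncation term $R_k^q$ and the extrapolation/consistency term $Q_k^q$ (split as in \eqref{eq:R2split}) by means of \eqref{eq:ineq3d}, \eqref{eq:ineq2} and the $C_*$-bound, then sum and apply the discrete Gronwall Lemma \ref{Gron2} to get $\|\bar{\bm e}^{n+1}\|_1^2+\delta t\sum_{q=0}^{n+1}\|\bar{\bm e}^q\|_2^2\le C_1(1+C_0^{2k})\delta t^{2k}$; next estimate $s^{n+1}=r^{n+1}-r(t^{n+1})$ from \eqref{eq: NSsav3b} as in Step 3, reducing to a discrete Gronwall inequality to which Lemma \ref{Gron1} applies, which gives $|1-\xi^{n+1}|\le C_3\delta t(\sqrt{1+C_0^{2k}}\,\delta t^{k-1}+1)$. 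Choosing $C_0=2C_3$ and $\delta t\le(1+2^{k+2}C_0^{k+1})^{-1}$ (with the usual separate bookkeeping for $k=1$, where $\eta_1^{n+1}=1-(1-\xi^{n+1})^2$) closes the induction. Finally, $\bm u^{n+1}=\eta_k^{n+1}\bar{\bm u}^{n+1}$ with $|\eta_k^{n+1}-1|\le C_0^k\delta t^k$ together with the $C_*$-bound on $\|\bar{\bm u}^{n+1}\|_1$ and $\|\bar{\bm u}^{n+1}\|_2$ transfers the estimates from $\bar{\bm u}^{n+1}$ to $\bm u^{n+1}$, and the triangle inequality gives the claimed bounds.

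The step I expect to be the main obstacle is establishing the \emph{local} uniform bound $\|\bar{\bm u}^n\|_1^2+\delta t\sum_{q=0}^n\|\Delta\bar{\bm u}^q\|^2\le C_*$, because in 3D Theorem \ref{stableThm} only controls $\|\bm u^n\|$ in $L^2$, so \eqref{eq: energystable} cannot be used as in 2D; one must instead run the $-\Delta\bar{\bm u}^{n+1}$ testing argument so that Lemma \ref{lemmaODEH} absorbs the $\tau_k$-dependent cross terms, split the trilinear term into an absorbable piece $\epsilon\|B_k(\bm u^q)\|_2^2$ plus a forcing piece $C(\epsilon)\|B_k(\bm u^q)\|_1^6$, and verify the hypotheses of Lemma \ref{locallemma} with $z_n=\|\bar{\bm u}^n\|_1^2$, $w_n=\|\Delta\bar{\bm u}^n\|^2$, $\phi(x)=x^6$ so that $T_*$ in \eqref{eq:Tstar} is a positive constant independent of $\delta t$. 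Once this is in place, the remainder is a routine transcription of the 2D argument restricted to $n<T_*/\delta t$.
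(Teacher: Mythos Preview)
Your proposal is correct and follows exactly the approach the paper itself takes: the paper does not give a separate proof for this corollary but simply states ``Similarly, we can prove the following result for the semi-discrete scheme \eqref{eq: NSsavb},'' i.e., rerun the proof of Theorem \ref{ThmNS3D} with the projection error $\bm e_\Pi^n$ set to zero so that all $N$-dependent terms drop out. Your outline of Step 1 (the local $l^\infty(H^1)\cap l^2(H^2)$ bound via Lemma \ref{lemmaODEH}, the 3D trilinear estimate \eqref{eq:ineq3d}, and Lemma \ref{locallemma} with $\phi(x)=x^6$) and the deferral of Steps 2--3 to the 2D argument matches the paper's structure precisely.
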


\subsection{Error analysis for the pressure}
With the established error estimates for the velocity $\bm u$, the error estimate for  the pressure $p$ can be  derived directly from \eqref{prePoissont} or \eqref{prePoissonN}.

We denote
\[e_{pN}^n:= p_N^n- \Pi_N p(\cdot,t^n),\; e_{p\Pi}^n:=\Pi_N p(\cdot,t^n)-p(\cdot,t^n),\;\text{ and }   e_p^n =e_{pN}^n+e_{p\Pi}^n\].
\begin{theorem}\label{thmp}  Under the same assumptions as in  {\bf Theorem \ref{ThmNS} and Theorem \ref{ThmNS3D}},  we have
\begin{equation}\label{errorpL2}
\|p_N^{n+1}-p(\cdot, t^{n+1})\|^2 \le \left\{
\begin{array}{lr}
C \delta t^{2k}+C N^{2(1-m)}, \,\, \forall n\le T/\delta t, \qquad d=2,\\
C \delta t^{2k}+C N^{2(1-m)}, \,\, \forall n\le T_*/\delta t, \qquad d=3.
\end{array}
\right.
\end{equation}
and
\begin{equation}\label{errorp}
\delta t \sum_{q=0}^{n}\|\nabla (p_N^{n+1}-p(\cdot, t^{n+1}))\|^2 \le \left\{
\begin{array}{lr}
C \delta t^{2k}+C N^{2(2-m)}, \,\, \forall n\le T/\delta t, \qquad d=2,\\
C \delta t^{2k}+C N^{2(2-m)}, \,\, \forall n\le T_*/\delta t, \qquad d=3.
\end{array}
\right.
\end{equation}
where  $p_N^{n+1}$ is  computed from \eqref{prePoissonN},  $T_*$ is defined in \eqref{eq:Tstar} and $C$ is a constant independent of $\delta t$ and $N$.
\end{theorem}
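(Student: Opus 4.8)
The plan is to turn the pressure error into the already-established velocity error bounds by comparing the discrete Poisson problem \eqref{prePoissonN} with the exact pressure equation \eqref{prePoisson}. First I would split $e_p^{n+1}=e_{pN}^{n+1}+e_{p\Pi}^{n+1}$. The projection term $e_{p\Pi}^{n+1}=\Pi_N p(\cdot,t^{n+1})-p(\cdot,t^{n+1})$ is handled by Lemma \ref{Fourier}: since $\bm u\in C([0,T];\bm H_p^m)$ and $-\Delta p=\nabla\cdot(\bm u\cdot\nabla\bm u)$, with $\nabla,\nabla\cdot,\Delta^{-1}$ all commuting in the periodic setting, one gets $p\in C([0,T];H_p^m)$, hence $\|e_{p\Pi}^{n+1}\|^2\le CN^{-2m}$ and $\|\nabla e_{p\Pi}^{n+1}\|^2\le CN^{2(1-m)}$. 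It then remains to bound $e_{pN}^{n+1}=p_N^{n+1}-\Pi_N p(\cdot,t^{n+1})$.

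Since $\Pi_N$ commutes with $\Delta$, subtracting $\Delta(\Pi_N p(\cdot,t^{n+1}))=\Pi_N\Delta p(\cdot,t^{n+1})=-\Pi_N\nabla\cdot(\bm u(\cdot,t^{n+1})\cdot\nabla\bm u(\cdot,t^{n+1}))$ from \eqref{prePoissonN} gives
\begin{equation*}
\Delta e_{pN}^{n+1}=-\Pi_N\nabla\cdot\left(\bm e^{n+1}\cdot\nabla\bm u_N^{n+1}+\bm u(\cdot,t^{n+1})\cdot\nabla\bm e^{n+1}\right).
\end{equation*}
The key observation is that the scheme produces discretely divergence-free iterates: taking the divergence of \eqref{eq: NSsavE1} in Fourier space and using that $\nabla\cdot(\textbf{A}\bm v)$ is the divergence of a curl and hence vanishes, one finds $(\alpha_k/\delta t-\nu\Delta)\nabla\cdot\bar{\bm u}_N^{n+1}=0$, so by induction $\nabla\cdot\bar{\bm u}_N^{n+1}=0$ and $\nabla\cdot\bm u_N^{n+1}=0$, whence $\nabla\cdot\bm e^{n+1}=0$. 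This lets me write the nonlinear difference in conservative form $\nabla\cdot\bm G$ with $\bm G:=\bm e^{n+1}\otimes\bm u_N^{n+1}+\bm u(\cdot,t^{n+1})\otimes\bm e^{n+1}$, so that $e_{pN}^{n+1}=-\Pi_N\Delta^{-1}\nabla\cdot\bm G$. The Fourier symbols of $\Delta^{-1}\nabla\cdot$ and $\nabla\Delta^{-1}\nabla\cdot$ are bounded (the latter by $1$, the former by a constant depending only on $\Omega$ since the zero mode is excluded), so
\begin{equation*}
\|e_{pN}^{n+1}\|+\|\nabla e_{pN}^{n+1}\|\le C\|\bm G\|\le C\left(\|\bm e^{n+1}\|_{L^4}\|\bm u_N^{n+1}\|_{L^4}+\|\bm u(\cdot,t^{n+1})\|_{L^4}\|\bm e^{n+1}\|_{L^4}\right)\le C\|\bm e^{n+1}\|_1,
\end{equation*}
where in the last step I used $\bm H_p^1\hookrightarrow L^4$ (valid for $d\le 4$), $\|\bm u(\cdot,t^{n+1})\|_1\le C$, and $\|\bm u_N^{n+1}\|_1\le\|\bm e^{n+1}\|_1+\|\bm u(\cdot,t^{n+1})\|_1\le C$ from {\bf Theorem \ref{ThmNS}}/{\bf Theorem \ref{ThmNS3D}}.

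Inserting $\|\bm e^{n+1}\|_1^2\le C\delta t^{2k}+CN^{2(1-m)}$ from {\bf Theorem \ref{ThmNS}} ($d=2$, $n+1\le T/\delta t$) and {\bf Theorem \ref{ThmNS3D}} ($d=3$, $n+1\le T_*/\delta t$) yields $\|e_{pN}^{n+1}\|^2,\,\|\nabla e_{pN}^{n+1}\|^2\le C\delta t^{2k}+CN^{2(1-m)}$ under the same conditions $\delta t\le\frac1{1+2^{k+2}C_0^{k+1}}$, $N\ge 2^{k+2}C_\Pi^{k+1}+1$; combining with the projection bounds and the triangle inequality gives \eqref{errorpL2}, and since the per-step bound is uniform in $q$ and $N^{2(1-m)}\le N^{2(2-m)}$, summing $\delta t\sum_{q=0}^n$ gives \eqref{errorp}. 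I do not expect a genuine obstacle: the whole argument hinges on recalling that $\bm u_N^{n+1}$ is divergence free — which is what allows $\Delta^{-1}$ to absorb the derivative off the nonlinear term so that only the uniformly available $\bm H^1$ bound on $\bm u_N^{n+1}$ (rather than an unavailable pointwise-in-time $\bm H^2$ bound) is needed — and on the routine regularity transfer from $\bm u$ to $p$.
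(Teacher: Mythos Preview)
Your conservative-form idea works for the $L^2$ estimate \eqref{errorpL2}, but there is a derivative-counting slip that breaks the $H^1$ estimate \eqref{errorp}. With $\bm G=\bm e^{n+1}\otimes\bm u_N^{n+1}+\bm u(\cdot,t^{n+1})\otimes\bm e^{n+1}$, the error equation reads $\Delta e_{pN}^{n+1}=-\Pi_N\nabla\cdot\nabla\cdot\bm G$, so \emph{two} divergences hit $\bm G$; your displayed formula $e_{pN}^{n+1}=-\Pi_N\Delta^{-1}\nabla\cdot\bm G$ has dropped one. The $L^2$ bound survives because $\Delta^{-1}\nabla\cdot\nabla\cdot$ has symbol $\xi_j\xi_k/|\xi|^2$ and is $L^2$-bounded, so indeed $\|e_{pN}^{n+1}\|\le\|\bm G\|\le C\|\bm e^{n+1}\|_1$. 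For the gradient, however, $\nabla e_{pN}^{n+1}=-\Pi_N\nabla\Delta^{-1}\nabla\cdot\nabla\cdot\bm G$ has symbol $\xi_l\xi_j\xi_k/|\xi|^2$, which is $O(|\xi|)$ and \emph{not} bounded; the best the bounded Leray projector $\nabla\Delta^{-1}\nabla\cdot$ gives is $\|\nabla e_{pN}^{n+1}\|\le\|\nabla\cdot\bm G\|=\|\bm e^{n+1}\cdot\nabla\bm u_N^{n+1}+\bm u(\cdot,t^{n+1})\cdot\nabla\bm e^{n+1}\|$, and this $L^2$ norm cannot be controlled by $\|\bm e^{n+1}\|_1$ alone --- one of the factors must be in $\bm H^2$, which you do not have pointwise in time for $\bm u_N^{n+1}$.

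This is exactly the obstruction the paper's proof faces: testing the error equation with $v_N=e_{pN}^{q+1}$ and using the trilinear estimates \eqref{eq:ineq2} yields the per-step bound $\|\nabla e_{pN}^{q+1}\|^2\le C\|\bm e^{q+1}\|_2^2+C\delta t^{2k}+CN^{2(1-m)}$, and only after multiplying by $\delta t$, summing over $q$, and invoking the $l^2(0,T;\bm H^2)$ velocity error bound from {\bf Theorem \ref{ThmNS}}/{\bf Theorem \ref{ThmNS3D}} does \eqref{errorp} follow --- which is precisely why the spatial exponent there is $N^{2(2-m)}$ rather than the $N^{2(1-m)}$ your pointwise argument would produce. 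To repair your proof, keep the conservative-form $L^2$ argument (it is a clean alternative to the paper's test with $v_N=\Delta^{-1}e_{pN}^{q+1}$), but for the gradient replace ``$\|\nabla e_{pN}^{n+1}\|\le C\|\bm G\|$'' by $\|\nabla e_{pN}^{n+1}\|\le\|\nabla\cdot\bm G\|$, estimate the two pieces via \eqref{eq:ineq2} with the uniform $\|\bm u_N^{q}\|_1$, $\|\bm u(\cdot,t^q)\|_2$ bounds, and sum using $\delta t\sum_q\|\bm e^{q+1}\|_2^2\le C\delta t^{2k}+CN^{2(2-m)}$.
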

\begin{proof}
From \eqref{prePoissonN}, we can write down the error equation for $p_N^{n+1}$ as
\begin{equation}\label{eq:errorp}
 \big( \nabla e_p^{q+1}, \nabla v_N \big)=\big(\bm u_N^{q+1} \cdot \nabla \bm u_N^{q+1}-\bm u(t^{q+1}) \cdot \nabla \bm u(t^{q+1}), \nabla v_N \big),\, \forall v_N \in S_N,\, \forall q+1\le n.
\end{equation}


To prove \eqref{errorpL2}, we  set $v_N=\Delta^{-1} e_{pN}^{q+1}$ in \eqref{eq:errorp} to obtain
\begin{equation}\label{eq:errorp2L2}
\begin{split}
\| e_{pN}^{q+1}\|^2 & = \Big(\bm u_N^{q+1} \cdot \nabla[\bm u_N^{q+1}-\bm u(t^{q+1})], \Delta^{-\frac{1}{2}} e_{pN}^{q+1} \Big)\\
& -\Big([\bm u(t^{q+1})-\bm u_N^{q+1}] \cdot \nabla \bm u(t^{q+1}), \Delta^{-\frac{1}{2}}  e_{pN}^{q+1} \Big)
\end{split}
\end{equation}
We can bound the righthand side of \eqref{eq:errorp2L2}  by using \eqref{eq:ineq2}, the stability result Theorem \ref{stableThm} and error analysis for the velocity, namely, we can obtain
\begin{equation}\label{eq:pnon1L2}
\begin{split}
\Big|\Big(\bm u_N^{q+1} \cdot \nabla[\bm u_N^{q+1}-\bm u(t^{q+1})], \Delta^{-\frac{1}{2}} e_{pN}^{q+1} \Big) \Big| & \le C(\epsilon)\|\bm u_N^{q+1}\|_1^2\|\|\bm e^{q+1}\|_1^2+\epsilon\|\nabla { e}_{pN}^{q+1}\|^2\\
& \le  C(\epsilon)(\delta t^{2k}+N^{2(1-m)})+\epsilon\|{ e}_{pN}^{q+1}\|^2;
\end{split}
\end{equation}
and
\begin{equation}\label{eq:pnon2L2}
\begin{split}
\Big|-\Big([\bm u(t^{q+1})-\bm u_N^{q+1}] \cdot \nabla \bm u(t^{q+1}), \Delta^{-\frac{1}{2}}  e_{pN}^{q+1} \Big)\Big| & \le C(\epsilon)\|\bm u(t^{q+1})\|_1^2\|\|\bm e^{q+1}\|_1^2+\epsilon\|\nabla { e}_{pN}^{q+1}\|^2\\
& \le  C(\epsilon)(\delta t^{2k}+N^{2(1-m)})+\epsilon\|{ e}_{pN}^{q+1}\|^2;
\end{split}
\end{equation}
Combining \eqref{eq:errorp2L2}-\eqref{eq:pnon2L2} with $\epsilon=\frac{1}{4}$ we obtain
\begin{equation}\label{eq:errorpreL2}
\|e_{pN}^{q+1}\|^2 \le C \delta t^{2k}+C N^{2(1-m)},\quad \forall q\le n.
\end{equation}
To prove \eqref{errorp}, we set $v_N=e_{pN}^{q+1}$ in \eqref{eq:errorp} to obtain
\begin{equation}\label{eq:errorp2}
\begin{split}
\| \nabla e_{pN}^{q+1}\|^2 & = \Big(\bm u_N^{q+1} \cdot \nabla[\bm u_N^{q+1}-\bm u(t^{q+1})], \nabla e_{pN}^{q+1} \Big)\\
& -\Big([\bm u(t^{q+1})-\bm u_N^{q+1}] \cdot \nabla \bm u(t^{q+1}), \nabla  e_{pN}^{q+1} \Big)
\end{split}
\end{equation}
Again, we can bound the righthand side of \eqref{eq:errorp2} in a similar fashion as in \eqref{eq:pnon1L2}-\eqref{eq:pnon2L2}, namely, we can obtain
\begin{equation}\label{eq:pnon1}
\begin{split}
\Big|\Big(\bm u_N^{q+1} \cdot \nabla[\bm u_N^{q+1}-\bm u(t^{q+1})], \nabla e_{pN}^{q+1} \Big) \Big| & \le C(\epsilon)\|\bm u_N^{q+1}\|_1^2\|\|\bm e^{q+1}\|_2^2+\epsilon\|\nabla { e}_{pN}^{q+1}\|^2\\
& \le  C(\epsilon)\|\bm e^{q+1}\|_2^2+\epsilon\|\nabla {e}_{pN}^{q+1}\|^2;
\end{split}
\end{equation}
and
\begin{equation}\label{eq:pnon2}
\begin{split}
\Big|-\Big([\bm u(t^{q+1})-\bm u_N^{q+1}] \cdot \nabla \bm u(t^{q+1}), \nabla e_{pN}^{q+1} \Big)\Big| & \le C(\epsilon)\|\bm u(t^{q+1})\|_2^2\|\|\bm e^{q+1}\|_1^2+\epsilon\|\nabla { e}_{pN}^{q+1}\|^2\\
& \le  C(\epsilon)(\delta t^{2k}+N^{2(1-m)})+\epsilon\|\nabla {e}_{pN}^{q+1}\|^2;
\end{split}
\end{equation}

Combining \eqref{eq:errorp2}-\eqref{eq:pnon2} with $\epsilon=\frac{1}{4}$, we obtain
\begin{equation}\label{eq:tobesumq}
\|\nabla e_{pN}^{q+1}\|^2 \le C \|{\bm e}^{q+1}\|_2^2+C \delta t^{2k}+C N^{2(1-m)},\quad \forall q \le n.
\end{equation}
Taking the sum of \eqref{eq:tobesum} for $q$ from 0 to $n$ and multiplying $\delta t$ on  both sides, we arrive at
\begin{equation}\label{eq:errorp3}
\delta t \sum_{q=0}^{n} \|\nabla e_{pN}^{q+1}\|^2 \le C \delta t \sum_{q=0}^{n}\|{\bm e}^{q+1}\|_2^2+C \delta t^{2k}+C N^{2(1-m)}.
\end{equation}
Now, with the estimates on $\|{\bm e}^n\|_2^2$ in Theorem \ref{ThmNS} or Theorem \ref{ThmNS3D}, \eqref{eq:errorp3} leads to
\begin{equation}\label{eq:errorpreL2b}
\delta t \sum_{q=0}^{n} \|\nabla e_{pN}^{q+1}\|^2 \le C \delta t^{2k}+C N^{2(2-m)}.
\end{equation}
Finally, we can obtain \eqref{errorpL2} and \eqref{errorp} from \eqref{eq:errorpreL2}, \eqref{eq:errorpreL2b} and
\begin{equation*}
\|\nabla e_{p\Pi}^q\|^2 \le C N^{2(1-m)}.
\end{equation*}
\end{proof}

Similarly, we can derive the following results for the semi-discrete scheme \eqref{eq: NSsavb}.

\begin{cor}\label{thmpb}  Under the same assumptions as in  {\bf Corollary 1  and Corollary 2},  we have
\begin{equation*}
\|p^{n+1}-p(\cdot,t^{n+1})\|^2 \le \left\{
\begin{array}{lr}
C \delta t^{2k}, \,\, \forall n\le T/\delta t, \qquad d=2,\\
C \delta t^{2k}, \,\, \forall n\le T_*/\delta t, \qquad d=3.
\end{array}
\right.
\end{equation*}
and
\begin{equation*}
\delta t \sum_{q=0}^{n}\|\nabla (p^{q+1}-p(\cdot,t^{n+1}))\|^2 \le \left\{
\begin{array}{lr}
C \delta t^{2k}, \,\, \forall n\le T/\delta t, \qquad d=2,\\
C \delta t^{2k}, \,\, \forall n\le T_*/\delta t, \qquad d=3.
\end{array}
\right.
\end{equation*}
where  $p^{n+1}$ is  computed from \eqref{prePoissont},  $T_*$ is defined in \eqref{eq:Tstar} and $C$ is a constant independent of $\delta t$.
\end{cor}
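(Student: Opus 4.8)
The plan is to follow the proof of Theorem \ref{thmp} essentially line by line, dropping the Fourier projection $\Pi_N$ everywhere and replacing the fully discrete velocity estimates of Theorems \ref{ThmNS}--\ref{ThmNS3D} by their semi-discrete counterparts, Corollary \ref{ThmNSb} (for $d=2$, valid on $[0,T]$) and Corollary \ref{ThmNS3Db} (for $d=3$, valid on $[0,T_*]$ with $T_*$ as in \eqref{eq:Tstar}). Because there is no spatial discretization, no projection error term $e_{p\Pi}$ appears and the final triangle-inequality step of the proof of Theorem \ref{thmp} is not needed; one works directly with $e_p^{q+1}:=p^{q+1}-p(\cdot,t^{q+1})$.

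First I would set up the pressure error equation. The exact pressure satisfies $\Delta p(\cdot,t^{q+1})=-\nabla\cdot(\bm u(t^{q+1})\cdot\nabla\bm u(t^{q+1}))$ by \eqref{prePoisson}, while $p^{q+1}$ satisfies \eqref{prePoissont}; subtracting and integrating by parts in the periodic setting gives
\[
(\nabla e_p^{q+1},\nabla v)=\big(\bm u^{q+1}\cdot\nabla\bm u^{q+1}-\bm u(t^{q+1})\cdot\nabla\bm u(t^{q+1}),\,\nabla v\big),\qquad\forall v\in\bm H^1_p(\Omega).
\]
I would then split the nonlinear difference as $\bm u^{q+1}\cdot\nabla\bm e^{q+1}-\bm e^{q+1}\cdot\nabla\bm u(t^{q+1})$, where $\bm e^{q+1}=\bm u^{q+1}-\bm u(\cdot,t^{q+1})$, so that both pieces carry the velocity error as a factor.

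For the $L^2$ bound I would pick the test function so that $\|e_p^{q+1}\|^2$ appears on the left (cf. \eqref{eq:errorp2L2}, i.e.\ $v=\Delta^{-1}e_p^{q+1}$ up to sign) and bound the two trilinear terms via \eqref{eq:ineq2}, using $\|\bm u^{q+1}\|_1\le C$ and $\|\bm u(\cdot,t^{q+1})\|_1\le C$ — the former from Theorem \ref{stableThm} when $d=2$ and from the local velocity bound established inside the proof of Corollary \ref{ThmNS3Db} when $d=3$ — together with $\|\bm e^{q+1}\|_1^2\le C\delta t^{2k}$ from Corollary \ref{ThmNSb}/\ref{ThmNS3Db}, and Young's inequality to absorb a small multiple of $\|e_p^{q+1}\|^2$. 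This yields $\|e_p^{q+1}\|^2\le C\delta t^{2k}$. For the $H^1$ bound I would instead test with $v=e_p^{q+1}$, which puts $\|\nabla e_p^{q+1}\|^2$ on the left; here one must choose the inequalities in \eqref{eq:ineq2} so that the $H^2$ norm lands on the error, namely $b(\bm u^{q+1},\bm e^{q+1},\nabla e_p^{q+1})\le c\|\bm u^{q+1}\|_1\|\bm e^{q+1}\|_2\|\nabla e_p^{q+1}\|$ and $b(\bm e^{q+1},\bm u(t^{q+1}),\nabla e_p^{q+1})\le c\|\bm u(t^{q+1})\|_2\|\bm e^{q+1}\|_1\|\nabla e_p^{q+1}\|$; Young's inequality then gives $\|\nabla e_p^{q+1}\|^2\le C\|\bm e^{q+1}\|_2^2+C\delta t^{2k}$, and multiplying by $\delta t$, summing over $q$, and invoking the $l^2(0,T;H^2)$ (resp.\ $l^2(0,T_*;H^2)$) estimate of Corollary \ref{ThmNSb}/\ref{ThmNS3Db} produces $\delta t\sum_{q=0}^n\|\nabla e_p^{q+1}\|^2\le C\delta t^{2k}$. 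The distinction between $d=2$ and $d=3$ enters only through the admissible time interval, $[0,T]$ versus $[0,T_*]$, and is inherited verbatim.

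The only delicate point — and it is mild — is exactly this placement of the extra derivative in the $H^1$ estimate: it must fall on $\bm e^{q+1}$, whose $l^2$-in-time $H^2$ norm is $O(\delta t^k)$ by Corollary \ref{ThmNSb}/\ref{ThmNS3Db}, and not on the numerical velocity $\bm u^{q+1}$, for which only a pointwise-in-time $H^1$ bound is available, while the smooth exact solution $\bm u(\cdot,t^{q+1})$ can harmlessly carry the $H^2$ norm in the other term. Everything else is routine bookkeeping identical to that in the proof of Theorem \ref{thmp}.
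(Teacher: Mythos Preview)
Your proposal is correct and follows exactly the approach implied by the paper, which simply states that Corollary \ref{thmpb} is obtained ``similarly'' from the proof of Theorem \ref{thmp} by working with the semi-discrete quantities and invoking Corollaries \ref{ThmNSb}--\ref{ThmNS3Db} in place of Theorems \ref{ThmNS}--\ref{ThmNS3D}. Your identification of the one mildly delicate point---that in the $H^1$ estimate the $H^2$ norm must land on $\bm e^{q+1}$ rather than on $\bm u^{q+1}$---matches precisely what the paper does in \eqref{eq:pnon1}--\eqref{eq:pnon2}.
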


\section{Concluding remarks}
We considered numerical approximation of the  incompressible Navier-Stokes equations with periodic boundary conditions for which the pressure can be explicitly eliminated, allowing us to construct very efficient IMEX type schemes using Fourier-Galerkin approximation in space. Our
 high-order  semi-discrete-in-time and fully discrete  IMEX  schemes   are  based on a scalar auxiliary variable (SAV) approach which enables us to derive uniform bounds for the numerical solution without any restriction on time step size.   We also take advantage of an additional energy dissipation law \eqref{eq:diss2d}, which is only valid for the two-dimensional Navier-Stokes equations with periodic boundary conditions, leading to a uniform bound in $H^1$-norm, instead of the usual $L^2$-norm.
 By using these uniform bounds and a delicate induction process,  we derived  global  error estimates in $l^\infty(0,T;H^1)\cap l^2(0,T;H^2)$ in the two dimensional case as well as local  error estimates in $l^\infty(0,T;H^1)\cap l^2(0,T;H^2)$ in the three dimensional case for our  semi-discrete-in-time and fully discrete  IMEX  schemes up to fifth-order. We also validated our schemes with manufactured exact solutions and with the double shear layer problem. Our  numerical results for the double shear layer problem indicate that  the SAV approach can effectively prevent numerical solution from blowing up, and that    higher-order schemes are preferable for flows with complex structures such as the double shear layer problem with thin layers.

To the best of our knowledge, our numerical schemes are the first unconditionally stable high-order
 IMEX type schemes for Navier-Stokes equations  without any restriction on time step size, and our error estimates
 are the first for any IMEX type scheme  for the Navier-Stokes equations  in the three-dimensional case.

While the stability results can be extended to similar schemes for the Navier-Stokes equations with non-periodic boundary conditions, it is non trivial to carry out  the corresponding error analysis which will be left as  a subject of future endeavor.

\bibliographystyle{plain}
\bibliography{bib_sav_error}
\end{document}